\newtheorem{theorem}{Theorem}[section]
\newtheorem{lemma}[theorem]{Lemma}
\newtheorem{claim}[theorem]{Claim}
\newtheorem{proposition}[theorem]{Proposition}
\newtheorem{corollary}[theorem]{Corollary}
\theoremstyle{remark}
\newtheorem{remark}[theorem]{Remark}
\newtheorem{definition}[theorem]{Definition}
\newtheorem{conventions}[theorem]{Conventions}
\numberwithin{equation}{subsection}
\newcommand{\on}{\operatorname}
\newcommand{\spec}{\operatorname{Spec}}
\newcommand{\ol}{\overline}
\newcommand{\cW}{\mathcal{W}}
\newcommand{\cY}{\mathcal{Y}}
\newcommand{\cC}{\mathcal{C}}
\newcommand{\cX}{\mathcal{X}}
\newcommand{\cH}{\mathcal{H}}
\newcommand{\CC}{\mathbb{C}}
\newcommand{\ZZ}{\mathbb{Z}}
\newcommand{\PP}{\mathbb{P}}
\newcommand{\QQ}{\mathbb{Q}}
\newcommand{\sO}{\mathscr{O}}
\newcommand{\sF}{\mathscr{F}}
\newcommand{\sH}{\mathscr{H}}
\newcommand{\Mbar}{\overline{M}}
\newcommand{\sMbar}{\overline{\mathscr{M}}}
\newcommand{\bt}{\mathbf{t}}
\newcommand{\ii}{\mathbb{1}}
\DeclareMathOperator{\Res}{Res}
\title[A Mirror Theorem for the Mirror Quintic]
{A Mirror Theorem for the Mirror Quintic}
\author{Y.-P.~Lee}
\address{Department of Mathematics, University of Utah,
Salt Lake City, Utah 84112-0090, U.S.A.}
\email{yplee@math.utah.edu}
\author{M.~Shoemaker}
\address{Department of Mathematics, University of Michigan,
Ann Arbor, MI 48109-1043, U.S.A.}
\email{shoemama@umich.edu}
\begin{document}

\begin{abstract}
%
The celebrated \emph{Mirror Theorem} states that the genus zero part of 
the $A$ model (quantum cohomology, rational curves counting) 
of the Fermat quintic threefold is equivalent to the $B$ model 
(complex deformation, variation of Hodge structure) 
of its mirror dual orbifold.
In this article, we establish a mirror-dual statement.
Namely, the \emph{$B$ model} of the Fermat quintic threefold
is shown to be equivalent to the \emph{$A$ model} of its mirror,
and hence establishes the mirror symmetry as a true duality.
\end{abstract}

\maketitle

\small
\setcounter{tocdepth}{1}
\tableofcontents
\normalsize

\setcounter{section}{-1}

\section{Introduction} \label{s:0}

\subsection{Mirror Theorem for the Fermat quintic threefold} \label{s:0.1}

Let $M$ be the Fermat quintic threefold defined by 
\[
  M := \{ x_0^5 + x_1^5 + x_3^5 + x_4^5 + x_5^5 = 0 \} \subset \PP^4.
\]
The Greene--Plesser \cite{bGrP} \emph{mirror construction} gives the mirror
\emph{orbifold} as the quotient stack
\[
  \cW := [M / \bar{G}],
\]
where $\bar{G} \cong (\ZZ/5\ZZ)^3$ is a (finite abelian) subgroup of 
the big torus of $\PP^4$ acting via generators $e_1, e_2, e_3$:
\[
 \begin{split}
  e_1[x_0, x_1, x_2, x_3, x_4] &= 
  [\zeta x_0, x_1, x_2, x_3, \zeta^{-1} x_4] \\ 
  e_2[x_0, x_1, x_2, x_3, x_4] &= 
  [ x_0, \zeta x_1, x_2, x_3, \zeta^{-1} x_4] \\ 
  e_3[x_0, x_1, x_2, x_3, x_4] &= 
  [ x_0, x_1, \zeta x_2, x_3, \zeta^{-1} x_4] . 
 \end{split}
\]
Assuming the validity of mirror symmetry for the mirror pair $(M, \cW)$, 
Candelas--de la Ossa--Green--Parkes made the celebrated calculation which
in particular predicted the number of rational curves in the Fermat
quintic of any degree.
This calculation was verified in full generality only after many years of works,
involving many distinguished mathematicians and culminating in the proof 
by A.~Givental \cite{aG1} (and Liu--Lian--Yau \cite{LLY}).
The mathematical proof of the CDGP Conjecture was termed 
the \emph{Mirror Theorem} for the Fermat quintic threefold.

In a way, what the Mirror Theorem says is that the invariants from
the complex deformations of $\cW$ matches those from 
the K\"ahler deformations of $M$, up to a change of variables termed
the \emph{mirror map}.
In terms of E.~Witten's terminology \cite{eW},
the above mirror theorem states that the (genus 0) $A$ model of $M$ is 
equivalent to $B$ model of $\cW$.
This can be formulated in mathematical terms as saying that
the genus zero Gromov--Witten theory (GWT), or quantum cohomology, on $M$
is equal to the variation of Hodge structures (VHS) associated to the
complex deformations of $\cW$.

The complex deformation of Calabi--Yau's is unobstructed by 
Bogomolov--Tian--Todorov.
The dimension of the Kodaira--Spencer space can be identified as the Hodge
number $h^{2,1}$ due to the Calabi--Yau property $K \cong \sO$.
In this case $h^{2,1}(\cW)=1$.
CDGP chose the following one-dimensional deformation family $\{ \cW_{\psi} \} = \{Q_\psi(x) = 0\}$, where
\begin{equation} \label{e:0.1}
 Q_\psi(x) = x_0^5 + x_1^5 + x_3^5 + x_4^5 + x_5^5 
  - \psi x_0 x_1 x_2 x_3 x_4 x_5
\end{equation}
of hypersurfaces in $[\PP^4/ \bar{G}]$,
such that $\psi = \infty$ is the maximally degenerate moduli point.
We note that it is often convenient to use $t = -5 \log \psi$ as the variable.
By local Torelli for Calabi--Yau, the deformation is embedded into VHS,
which then gives all information about the complex deformation.

The K\"ahler deformation is given by genus zero GWT along the ``small''
variable $t$, which is the dual coordinate for the hyperplane class $H$.
$H^{1,1}(M)_{\CC}$ is often called the complexified K\"ahler moduli.

We can rephrase the above in much more precise terms.
Both genus zero GWT and VHS can be described by
differential systems associated to flat connections. 
For GWT, it is the \emph{Dubrovin connection}; 
for VHS the \emph{Gauss--Manin} connection.
The definitions can be found in Sections~\ref{s:1} and \ref{s:4} respectively.
Therefore, we can phrase the Mirror Theorem for the Fermat quintic in the 
following form.

\begin{theorem}[$=$ Theorem~\ref{t:MTfull}] \label{t:0.1}
The fundamental solutions of the Gauss--Manin connection for $\cW_t$
are equivalent, up to a mirror map, to the fundamental solutions of the 
Dubrovin connection for $M$, when restricted to $H^2(M)$.
\end{theorem}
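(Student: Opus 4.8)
The plan is to exhibit, after the mirror map, an isomorphism of rank-four connections between the Gauss--Manin connection of the family $\{\cW_\psi\}$ and the Dubrovin connection of $M$ restricted to $H^2(M)$, both viewed on a punctured disk about the large-complex-structure point $\psi=\infty$ (equivalently $q:=\psi^{-5}\to 0$), and then to check that it carries the one distinguished frame to the other, so that the two fundamental solution matrices correspond. On the $B$-side the Gauss--Manin connection is the one on the rank-four local system underlying $H^3(\cW_\psi)$, with Hodge numbers $(1,1,1,1)$; on the $A$-side, restricting the parameter of the Dubrovin connection to $H^2(M)=\CC\cdot H$ leaves a rank-four connection on the $\nabla$-invariant sub-bundle $H^0(M)\oplus H^2(M)\oplus H^4(M)\oplus H^6(M)$, the odd cohomology splitting off for degree reasons at genus zero.

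First I would pin down the $B$-side connection explicitly. A Griffiths--Dwork reduction applied to a normalized holomorphic three-form $\Omega_\psi$ on $\cW_\psi$ produces the hypergeometric Picard--Fuchs operator $L=\theta^4-5q\,(5\theta+1)(5\theta+2)(5\theta+3)(5\theta+4)$ with $\theta=q\,\frac{d}{dq}$. Its Frobenius solutions at $q=0$ --- one holomorphic, the remaining three carrying increasing powers of $\log q$ --- together with their $\theta$-derivatives and the Yukawa normalization assemble into a canonical fundamental solution matrix for Gauss--Manin, once the scale of $\Omega_\psi$ and a basis of $H_3(\cW_\psi)$ adapted to the monodromy weight filtration at $\psi=\infty$ are fixed.

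Next, on the $A$-side, I would take Givental's Mirror Theorem \cite{aG1} (see also \cite{LLY}) as input, in the form that the small $J$-function of $M$ restricted to $H^2(M)$ equals, after the mirror map identifying the K\"ahler coordinate with the period ratio $I_1(q)/I_0(q)$ and after the normalization by $I_0(q)$, the hypergeometric $I$-function $I_M(q,z)=\sum_{d\ge 0}q^d\,\frac{\prod_{m=1}^{5d}(5H+mz)}{\prod_{m=1}^{d}(H+mz)^5}$ with $H^4=0$ in $H^*(M)$. Its components in powers of $H/z$ are exactly the Frobenius solutions of $L$, so the quantum differential equation of $M$ along $H$ becomes $L$ after the mirror-map change of variables. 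Since $\dim H^2(M)=1$ the restricted Dubrovin connection is governed by a single fourth-order ODE and is therefore determined by $J_M$ alone --- equivalently by the single Yukawa function $q\mapsto\langle H,H,H\rangle$ (the genus-zero three-point function along $H$) read off from the $I$-function; comparing the resulting fundamental solution matrix column by column with the Gauss--Manin period matrix, and matching the flat coordinate, the $I_0(q)^{-1}$ rescaling of the holomorphic section, and the Yukawa coupling, gives the asserted equivalence.

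The genuinely deep input is Givental's identity $I_M=J_M$, whose proof runs through equivariant localization on the space of genus-zero stable maps to $\PP^4$; I would cite it rather than reprove it. Granting it, the remaining difficulty --- and the point where one must be most careful --- is the normalization bookkeeping: the mirror map must be pinned down so that it simultaneously intertwines the Euler-field grading on the Dubrovin side with the monodromy weight filtration on the Gauss--Manin side, and so that $\Omega_\psi$ corresponds, under the $z\to\infty$ limit of the fundamental solution, to the fundamental class $1\in H^0(M)$. This reduces the theorem to a finite check that two rank-four connections already known to share all horizontal sections also carry the same preferred frame.
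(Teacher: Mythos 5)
Your proposal is correct and follows essentially the same route as the paper: cite Givental's identity $J^M(\tau(t),z)=I^B(t,z)/F(t)$ for the first row of the solution matrix, then generate the remaining rows by repeatedly applying the Gauss--Manin connection to $\omega/F(t)$ on the $B$-side and the correspondingly rescaled Dubrovin connection to $1$ on the $A$-side, checking via the Hodge filtration that the resulting frames are bases. Your ``column-by-column'' matching of Frobenius solutions and their $\theta$-derivatives is exactly the paper's construction $\phi_j=(\nabla^{GM}_t)^j\phi_0$, $T_j=\frac{\partial(G/F)}{\partial t}\nabla^z_\tau T_{j-1}$.
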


\subsection{Mirror Theorem for the mirror quintic} \label{s:0.2}

Theorem~\ref{t:0.1} can be stated suggestively as
\[
 \text{$A$ model of $M$ $\equiv$ $B$ model of $\cW$.}
\]
In order for the mirror symmetry to be a true duality, one will also
have to show that
\[
 \text{$B$ model of $M$ $\equiv$ $A$ model of $\cW$.}
\]
This is the task we set for ourselves in this paper.

The first thing we note is that $\cW$ is an orbifold.
Thus we must replace the singular cohomology by the Chen--Ruan cohomology,
and the usual Gromov--Witten theory by the orbifold GWT.
These are defined in Section~\ref{s:1}.

Upon a closer look, however, there is a serious technical issue.  In the B
model of $M$, the Kodaira--Spencer space 
is of dimension $101$ and 
the VHS of $H^3(M)$ is a system of rank $204$, thus a calculation of
the full Gauss--Manin connection for $M$ is unfeasible.
As a first step however, we choose a one-dimensional deformation family $\{ M_t \}$
defined by the vanishing of \eqref{e:0.1}, 
\emph{reinterpreted as a family in $\PP^4$}.
Similarly, in the $A$ model of $\cW$, we have 
$h^{1,1}_{CR}(\cW) =101$, where the subscript denotes 
Chen--Ruan cohomology.
We choose the one-dimensional subspace of the complexified K\"ahler moduli
spanned by the hyperplane class and call the coordinate $t$ as before.
These one dimensional families are arguably 
\emph{the most natural and the most important dimension}.

With these choices, the Gauss--Manin system for $M$ 
still has rank $204$, but over a one dimensional base.
The fundamental solution is a matrix of size $204$ by $204$ in one variable.
The Dubrovin connection on $H^{even}_{CR}(\cW)$ likewise has the
fundamental solution matrix of size $204$ by $204$.
Here $204 = \dim H^{even}_{CR}(\cW)$.

The main result of this paper is the following theorem.

\begin{theorem}[$=$ Theorem~\ref{t:MMTfull}] \label{t:0.2}
The fundamental solutions of the Gauss--Manin connection for $\{ M_t \}$ are
equivalent, up to a mirror map, to the fundamental solutions of
the Dubrovin connection for $\cW$ restricted to $t \in H^2(\cW)$.
\end{theorem}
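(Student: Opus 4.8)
The plan is to prove Theorem~\ref{t:0.2} by exhibiting both sides explicitly as solutions to the same Picard--Fuchs-type ODE (or, better, the same rank-$204$ differential system), after an appropriate change of variables. On the $B$ side, for the pencil $\{M_t\}$ in $\PP^4$ cut out by $Q_\psi$, the relevant period integrals are governed by a hypergeometric-type operator in the variable $z = \psi^{-5}$ (equivalently $z = e^{t}$ up to scaling). The key observation is that although the full $H^3(M)$ is $204$-dimensional, the Gauss--Manin connection restricted to this one-parameter family decomposes, via the residual $\bar G$-action, into a direct sum of sub-systems indexed by characters of $\bar G$: the $\bar G$-invariant part recovers the classical rank-$4$ Picard--Fuchs equation of the mirror quintic (the ``$\CC$-VHS'' piece studied by CDGP), while the non-invariant pieces contribute the remaining rank-$200$ worth of solutions, each of which is again of hypergeometric form with shifted parameters. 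So the first step is: (i) write down the period integrals for $\{M_t\}$, (ii) perform the character decomposition of the Gauss--Manin system under $\bar G$, and (iii) identify each isotypic block with an explicit hypergeometric/GKZ system.

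On the $A$ side, the object is the Dubrovin connection on $H^{even}_{CR}(\cW)$ restricted to $t\in H^2(\cW)$. Here the main input is Givental-style mirror symmetry / the quantum Lefschetz principle for the orbifold $\cW = [M/\bar G]$: one needs the small $J$-function (or $I$-function) of $\cW$ along the hyperplane direction. The Chen--Ruan cohomology $H^{even}_{CR}(\cW)$ splits into the untwisted sector $H^{even}(M)^{\bar G}$ together with $200$ twisted sectors indexed by the age-shifted fixed-point data of the nontrivial group elements; each twisted sector is a point (or $\PP$), contributing a one-dimensional piece with a known age grading. The step here is to compute the $I$-function of $\cW$ sector-by-sector — the untwisted part is the usual quintic $I$-function, and each twisted sector contributes a hypergeometric series in $e^{t}$ with exponents determined by the age — and then verify that the Birkhoff factorization producing the $J$-function, hence the fundamental solution of the Dubrovin connection, matches the $B$-side block from the previous paragraph after applying the orbifold mirror map $t\mapsto t(q)$.

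Concretely, I would organize the proof as follows. First, reduce both fundamental-solution matrices to block-diagonal form with $202$ blocks (one rank-$4$ block from the untwisted/invariant sector, and $200$ rank-$1$ blocks, paired up by the $\ZZ/5$ symmetry into matching conjugate characters), so that the $204\times204$ identification reduces to finitely many one-variable identifications. Second, on the $A$ side compute the orbifold $I$-function of $\cW$ restricted to the hyperplane direction, using the decomposition of the inertia stack of $\cW$ and the fact that $\cW$ is a quotient of the quintic by a finite group acting with the fixed-loci structure described in \S\ref{s:0.1}; read off, block by block, the hypergeometric series. Third, on the $B$ side compute the periods of $\{M_t\}$ and their $\bar G$-character decomposition, obtaining the matching hypergeometric series with the same parameters. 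Fourth, match the asymptotics at the large-complex-structure / large-volume point $t\to$ (the MUM point) to pin down the mirror map and the normalization (this is the step where the classical Mirror Theorem, Theorem~\ref{t:0.1}, enters: the invariant block \emph{is} precisely that theorem, so only the twisted blocks are genuinely new). Finally, assemble the blocks and invoke uniqueness of solutions of an ODE with prescribed initial data to conclude the full $204\times204$ equivalence.

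The hard part will be the bookkeeping on the $A$-side twisted sectors: one must correctly enumerate the $200$ twisted sectors of $\cW = [M/\bar G]$ together with their ages and the induced line-bundle data, and verify that the orbifold quantum Lefschetz / Givental formalism produces, in each such sector, exactly the hypergeometric series that appears in the corresponding $\bar G$-character block of the periods of $\{M_t\}$ — including getting the $\psi$-normalization and the twisted-sector grading shifts to line up. The $B$-side character decomposition of the Gauss--Manin system, while computationally involved, is essentially a Griffiths--Dwork calculation and should present no conceptual obstacle; likewise the invariant block is handled by citing Theorem~\ref{t:0.1}. So the real content is the sector-by-sector identification in the twisted part, and checking that the \emph{same} mirror map $t\mapsto t(q)$ works simultaneously on every block.
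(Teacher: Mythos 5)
Your overall architecture---match explicit hypergeometric series on the two sides block by block, with the blocks on the $B$ side indexed by the $\bar{G}$-character (equivalently, by the monomials $P_g=x_0^{r_0}\cdots x_4^{r_4}$, since the Griffiths--Dwork relations preserve $r_i-r_j \bmod 5$), and then extend from the distinguished rows to the full $204\times 204$ fundamental solution by differentiating---is the strategy of the paper. The extension step is carried out exactly as you suggest: one checks that $\{\phi_e,\nabla^{GM}_t\phi_e,(\nabla^{GM}_t)^2\phi_e,(\nabla^{GM}_t)^3\phi_e\}\cup\{\phi_g,\nabla^{GM}_t\phi_g\}$ is a basis of $\sH$ (Claim~\ref{claim:6.6}) and transports covariant derivatives across the row-level identities.

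Two concrete points in your plan are wrong. First, the block structure: the twisted part does not split into $200$ rank-$1$ blocks. The Picard--Fuchs operators for the age-one forms $\omega_g$ (Table~\ref{table}) are all of order two and irreducible, and correspondingly each twisted $J^{\cW}_g$ has exactly two nonvanishing components (see \eqref{e:dim1} and \eqref{e:dim0}); the correct decomposition is one rank-$4$ block plus $100$ rank-$2$ blocks. A rank-$1$ block would be a first-order ODE, solvable in closed form and carrying no enumerative content, and no such refinement of the splitting exists. Second, the invariant block is not obtained by citing Theorem~\ref{t:0.1}: that theorem equates the $A$ model of $M$ with the $B$ model of $\cW_\psi$, whereas the invariant block here equates the \emph{untwisted-sector} $A$ model of the orbifold $\cW$ with the $\bar{G}$-invariant part of the $B$ model of $M_\psi$. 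These turn out to involve the same hypergeometric series, but that is an output of the computation, not an input---the untwisted-sector $J$-function of $\cW$ is a priori a different generating function from $J^M$ and must be computed by the same machinery as the twisted sectors. Relatedly, you treat the sector-by-sector $I$-function of $\cW$ as essentially available from ``Givental-style quantum Lefschetz,'' but the quantum Lefschetz step needs as input the small $J$-\emph{matrix} of the ambient $[\PP^4/\bar{G}]$ (several rows of the fundamental solution indexed by twisted sectors, not the single small $J$-function), and establishing that matrix is a new localization/recursion computation (Theorem~\ref{t:Zformula}) which is where most of the actual work of the proof lives.
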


\subsection{Outline of the paper} \label{s:0.3}
We have in mind the readership with diverse background.
For convenience, we have included short introductions in Section~1
and Section~4 to orbifold Gromov--Witten theory and 
the theory of variation of Hodge structures, 
recalling only facts pertinent to our presentation.
Sections~2 and 3 present the $A$ model calculation for $\cW$.
We first calculate the genus zero Gromov--Witten theory for $[\PP^4/\bar{G}]$
in Section~\ref{s:2}; 
we then calculate the genus zero Gromov--Witten theory for $\cW$ in 
Section~\ref{s:3}.
In Section~5 we present a reformulation of the results from \cite{DGJ},
and summarize our $B$ model calculation for $M_t$.
In the last section, we prove our main result, showing
the validity of the Mirror-dual statement of the Mirror Theorem.
For the benefit of our dual readership, we include a derivation
of Theorem~\ref{t:0.1} from the usual statement of the Mirror Theorem.

\subsection*{Acknowledgements}
Y.P.L.\ would like to thank his collaborators Profs.~H.-W.~Lin and C.-L.~Wang.
In particular, he learns most of what little he knows about the Hodge theory 
from his collaborative projects with them.
Y.P.L.\ is partially supported by the NSF.

M.S.\ would like to thank his advisor, Prof.~Y.~Ruan for his
help and guidance over the years, and for first introducing him to 
this beautiful subject.  
He is also grateful to Prof.~R.~Cavalieri for many useful conversations.
M.S.\ was partially supported by NSF RTG grant DMS-0602191.

\section{Quantum orbifold cohomology} \label{s:1}
In this section we give a brief review of Chen--Ruan cohomology and
quantum orbifold cohomology, 
with the parallel goal of setting notation.
A more detailed general review can be found in \cite{CCLT}.

\begin{conventions} \label{conv:1}
We work in the algebraic category.
The term \emph{orbifold} means ``smooth separated Deligne--Mumford stack
of finite type over $\mathbb{C}$.''

The various dimensions are complex dimensions.
On the other hand, the degrees of cohomology are all in real/topological 
degrees.
  
Unless otherwise stated all cohomology groups have coefficients
in $\CC$.
\end{conventions}

\subsection{Chen--Ruan cohomology groups} \label{s:1.1}

Let $\cX$ be a stack. 
Its inertia stack $I\cX$ is the fiber product
\[
\xymatrix{
I\cX \ar[r] \ar[d] & \cX \ar[d]^\Delta \\
\cX\ar[r]^\Delta & \cX \times \cX \\ 
}
\]
where $\Delta$ is the diagonal map.  
The fiber product is taken in the $2$-category of stacks. 
One can think of a point of $I\cX$ as
a pair $(x,g)$ where $x$ is a point of $\cX$ and $g \in \on{Aut}_{\cX}(x)$.  
There is an involution $I: I\cX \to I\cX$ 
which sends the point $(x,g)$ to $(x,g^{-1})$.
It is often convenient to call the components of $I\cX$ for which
$g \neq e$ the \emph{twisted sectors}.

If $\cX = [V/G]$ is a global quotient of a nonsingular
variety $V$ by a finite group $G$, $I\cX$ takes a particularly simple form.
Let $S_G$ denote the set of conjugacy classes $(g)$ in $G$,
then
\[
 I [V/G] = \coprod_{(g) \in S_G} [ V^g/C(g) ].
\]

The \emph{Chen--Ruan orbifold cohomology groups} $H^*_{CR}(X)$ (\cite{CR1})
of a Deligne--Mumford stack $\cX$ are the cohomology groups of 
its inertia stack
\[
  H_{CR}^*(\cX) := H^*(I\cX).
\]

Let $(x, g)$ be a geometric point in a component $\cX_i$ of $I\cX$. 
By definition $g \in \on{Aut}_{\cX}(x)$.
Let $r$ be the order of $g$.
Then the $g$-action on $T_x \cX$ decomposes as eigenspaces
\[
T_x \cX = \bigoplus_{0 \leq j < r} E_j
\]
where $E_j$ is the subspace of $T_x \cX$ on which $g$ acts by
multiplication by $\exp(2\pi\sqrt{-1}j/r)$. 
Define the age of $\cX_i$ to be
\[
 \on{age}(\cX_i) := \sum_{j=0}^{r-1} \frac{j}{r} \dim( E_j).
\]
This is independent of the choice of geometric point $(x,g) \in \cX_i$. 

Let $\alpha$ be an element in $H^p(\cX_i) \subset H^*(I\cX)$.
Define the age-shifted degree of $\alpha$ to be
\[
 \deg_{CR}(\alpha) := p + 2 \on{age}(\cX_i).
\]
This defines a grading on $H_{CR}(\cX)$.

When $\cX$ is compact the {\em orbifold Poincar\'e pairing} is defined by
\[
 (\alpha_1, \alpha_2)^{\cX}_{CR} := \int_{I\cX} \alpha_1 \cup I^*(\alpha_2),
\]
where $\alpha_1$ and $\alpha_2$ are elements of $H^*_{CR}(\cX)$.
It is easy to see that when $\alpha_1$ and $\alpha_2$ are homogeneous elements,
$(\alpha_1, \alpha_2)_{CR} \neq 0$ only if 
$\deg_{CR} (\alpha_1) + \deg_{CR} (\alpha_2) = 2 \dim (\cX)$.

\subsection{Orbifold Gromov-Witten theory} \label{s:1.2}

\subsubsection{Orbifold Gromov--Witten invariants}

We follow the standard references \cite{CR2} and \cite{AGV} 
of orbifold Gromov--Witten theory.

Given an orbifold $\cX$, there exists a moduli space $\sMbar_{g,n}(\cX, d)$ 
of stable maps from $n$-marked genus $g$ pre-stable orbifold curves 
to $\cX$ of degree $d \in H_2(\cX; \QQ)$.
Each source curve $(\cC, p_1, \ldots, p_n)$ has non-trivial orbifold structure 
only at the nodes and marked points:
At each (orbifold) marked point it is a cyclic quotient stack and
at each node a \emph{balanced} cyclic quotient. That is, 
\'etale locally isomorphic to 
\[ 
  \left[ \spec \left( \frac{\CC[x, y]}{(xy)} \right) / \mu_r \right],\] 
where $\zeta \in \mu_r$ acts as $(x,y) \mapsto (\zeta x, \zeta^{-1} y)$.  
The maps are required to be representable at each node.

Each marked point $p_i$ is \'etale locally isomorphic to $[\CC/ \mu_{r_i}]$.  
There is an induced homomorphism 
\[ 
  \mu_{r_i} \to \on{Aut}_{\cX}(f(p_i)).
\]  
Maps in $\sMbar_{g,n}(\cX, d)$ are required be representable, which
amounts to saying that these homomorphisms be injective.  
For each marked point $p_i$, one can thus associate a point $(x_i, g_i)$ in 
$I\cX$ where $x_i = f(p_i)$, and $g_i \in \on{Aut}_{\cX}(x_i)$ is the 
image of $\exp(2 \pi \sqrt{-1}/r_i)$ under the induced homomorphism.  

Given a family $\cC \to S$ of marked orbifold curves, there may be nontrivial 
gerbe structure above the locus defined by the $i$-th marked point.  
For this reason there is generally not a well defined map 
\[ 
  ev_i: \sMbar_{g,n}(\cX, d) \to I\cX.
\] 
However, as explained in \cite{AGV} and \cite{CCLT} Section~2.2.2,  
it is still possible to define maps 
\[ 
  ev_i^*: H^*_{CR}(\cX) \to H^*(\sMbar_{g,n}(\cX, d))
\] 
which behave \emph{as if the evaluation maps $ev_i$ are well defined}.

Let $X$ denote the coarse underlying space of the stack $\cX$.  
There is a \emph{reification map} 
\[
  \sMbar_{g,n}(\cX, d) \to \sMbar_{g,n}(X, d),
\] 
which forgets the orbifold structure of each map.  
For each marked point there is an associated 
line bundle, the $i^{th}$ universal cotangent line bundle,
\[ 
 \begin{array}{c} L_i \\ \downarrow \\ \sMbar_{g,n}(X, d) \end{array} 
\] 
with fiber $T^*_{p_i} C$ over $\{f: (C, p_1, \ldots, p_n) \to X\}$.  
Define the $i$-th $\psi$-class
 by  $\psi_i = r^*(c_1(L_i))$.  

As in the non-orbifold setting, there exists a virtual fundamental class 
$[\sMbar_{g,n}(\cX, d)]^{vir}$.
\emph{Orbifold Gromov-Witten invariants} for $\cX$ are defined as integrals
\begin{equation*} 
\big\langle \alpha_1 \psi^{k_1}, \ldots , \alpha_n \psi^{k_n}\big\rangle_{g,n,d}^{\cX}  = 
\int_{[\sMbar_{g,n}(\cX, d)]^{vir}} \prod_{i=1}^n 
ev_i^*(\alpha_i) \psi_i^{k_i}, 
\end{equation*}
where $\alpha_i \in H^*_{CR}(\cX)$.

Let $\sMbar_{g,(g_1, \ldots, g_n)}(\cX, d)$ denote the open 
and closed substack of $\sMbar_{g,n}(\cX, d)$ such that $ev_i$ maps to a 
component $\cX_{g_i}$ of $I\cX$.  
The space $\sMbar_{g,(g_1, \ldots, g_n)}(\cX, d)$ has (complex)
virtual dimension 
\[
  n + (g - 1)(\dim \cX - 3) +  \langle c_1(T\cX),d \rangle 
  -  \sum_{i=0}^n \on{age}(\cX_{g_i}).
\]  
In other words, for homogeneous classes $\alpha_i \in H^*(\cX_{g_i})$ 
the Gromov-Witten invariant \\
$\big\langle \alpha_1, \ldots , \alpha_n\big\rangle_{g,n,d}^{\cX}$ 
will vanish unless 
\[ 
  \sum_{i = 1}^n \deg_{CR}(\alpha_i) = 2\left(n + (g - 1)(\dim \cX - 3) 
  +  \langle c_1(T\cX),d \rangle\right).
\]

\subsubsection{Quantum cohomology and the Dubrovin connection} \label{s:1.2.5}

Let $\{T_i\}_{i \in I}$ be a basis for $H^*_{CR}(\cX)$ and 
$\{ T^i \}_{i \in I}$ its dual basis.
We can represent a general point in coordinates by
\[
 \bt = \sum_i t^i T_i   \in H^*_{CR}(\cX).
\]
Gromov-Witten invariants allow us to define a family of product structures 
parameterized by $\bt$ in a formal neighborhood of $0$ in $H^*_{CR}(\cX)$.  
The \emph{(big) quantum product} $*_\bt$ is defined as
\begin{equation}\label{e:product}
 \alpha_1 *_\bt \alpha_2 :=  \sum_d \sum_{n \geq 0} \sum_i
 \frac{q^d}{n!} \langle \alpha_1, \alpha_2, T_i,  
 \bt, \ldots, \bt \rangle^\cX_{0, 3+n, d} T^i ,
\end{equation}
where the first sum is over the Mori cone of effective curve classes 
and the variables $q^d$ are in an appropriate Novikov ring $\Lambda$ used to 
guarantee formal convergence of the sum.  
The \emph{WDVV equations} (\cite{CK}, Section~8.2.3) imply the associativity
of the product.
The \emph{small quantum product} is defined by restricting the parameter of
the quantum product to divisors $\bt \in H^2(\cX)$ supported on the 
\emph{non-twisted sector}. 

One can interpret $*_\bt$ as defining a product structure on the tangent 
bundle $T H^*_{CR}(\cX; \Lambda)$, such that for a fixed $\bt$ 
the quantum product defines a (Frobenius) algebra structure on 
$T_{\bt} H^*_{CR}(\cX; \Lambda)$.
This can be rephrased in terms of the \emph{Dubrovin connection}:
\[ 
  \nabla^z_{\frac{\partial}{\partial t^i}} \left( \sum_{j} a_j T_j\right) = 
 \sum_{j} \frac{\partial a_j} {\partial t^i} T_j 
 - \frac{1}{z} \sum_{j} a_j T_i *_\bt T_j .
\]  
This defines a $z$-family of connections on $T H^*_{CR}(\cX; \Lambda)$.

\begin{remark} 
Note that when $\bt$, $T_i$ and $T_j$ are in $H^{even}_{CR}(\cX)$, then 
for dimension reasons $T_i *_\bt T_j$ will be also be supported in even degree.
Thus $\nabla^z$ restricts to a connection on $TH^{even}_{CR}(\cX; \Lambda)$.
When restricted to $TH^{even}_{CR}(\cX; \Lambda)$, the quantum product is commutative.
\end{remark}

\begin{remark} \label{r:1.3}
For the purpose of this paper, we clarify here what we mean by 
``$A$ model of $\cX$''.
Let $H := H^{even}_{CR}(\cX; \Lambda)$.
The (genus zero part of) \emph{$A$ model of $\cX$} is the tangent bundle 
$TH$ with its natural (flat) fiberwise pairing and the Dubrovin connection 
restricted to $H^{1,1}_{CR}(\cX)$.
\end{remark}

The commutativity and associativity of the quantum product implies that the 
Dubrovin connection is flat.
The \emph{topological recursion relations} 
allow us to explicitly describe solutions to $\nabla^z$.  
Define 
\begin{equation}\label{e:bsol}
 s_i(\bt,z) = T_i + \sum_d \sum_{n \geq 0} \sum_j \frac{q^d}{n!} 
 \bigg\langle \frac{T_i}{z - \psi_1}, T^j,  
 \bt, \ldots, \bt \bigg\rangle^\cX_{0, 2+n, d} T_j
\end{equation}  
where ${1}/{(z - \psi_1)}$ should be viewed as a power series in $1/z$.  
The sections $s_i$ form a basis for the $\nabla^z$-flat sections;
see e.g.~\cite{CK}, Proposition 10.2.1.
Thus we obtain a fundamental solution matrix $S = S(\bt, z) = (s_{ij})$ given by 
\begin{equation}\label{e:sol}
 s_{ij}(\bt,z)= ( T^i, s_j )_{CR}^\cX. 
\end{equation}

If one restricts the base to divisors $\bt \in H^2(\cX)$, 
the \emph{divisor equation} (\cite{AGV} Theorem~8.3.1)
allows a substantial simplification of the formula for $s_i$
\[
 s_i(\bt,z)|_{\bt \in H^2(\cX)} = 
 e^{\bt / z}\left(T_i + \sum_{d > 0} \sum_j
 q^d e^{d\bt} \bigg\langle \frac{ T_i}{z - \psi_1}, 
 T^j \bigg\rangle^\cX_{0, 2,d}  T_j\right).
\]

\subsection{Generating functions} \label{s:1.3}

Given an orbifold $\cX$, Givental's (big) $J$-function is the first row vector
of the fundamental solution matrix, obtained by pairing
the solution vectors of the Dubrovin connection with $1$.
\begin{equation*} 
 \begin{split}
   J^\cX_{big}(\bt, z) & :=  \sum_{i} \left( s_i(\bt), 1 \right)_{CR}^\cX T^i \\
   & =  1 + \sum_d \sum_{n \geq 0} \sum_i \frac{q^d}{n!}
\bigg\langle \frac{T_i}{z - \psi_1}, 1,  
\bt, \ldots, \bt \bigg\rangle^\cX_{0, 2+n, d} T^i \\
  & = 1 + \frac{\bt}{z} + \sum_{d} 
   \sum_{n \geq 0} \sum_i \frac{q^d}{n!} \bigg\langle 
  \frac{T_i}{z(z - \psi_1)}, \bt, \ldots, \bt \bigg\rangle^\cX_{0,1+n, d} T^i ,
 \end{split}
\end{equation*}
The last equality follows from the \emph{string equation}.
It is also easy to see that the fundamental solution matrix
$S(\bt, z)$ of \eqref{e:sol} is equal to $z \nabla J_{big}$.
As such, $J_{big}$ encodes all information about quantum cohomology.

However, the big $J$-function is often impossible to calculate directly.
In the non-orbifold Gromov--Witten theory, when the cohomology is 
generated by divisors, the \emph{small $J$-function} proves much more
computable, while powerful enough to solve many problems; 
see e.g.~\cite{aG1, aG2}.
The small $J$-function for a nonsingular \emph{variety} $X$ is
a function on $\bt \in H^2(X)$:
\[ \begin{split}
 J^X_{small} (\bt,z) &:= J^X_{big} (\bt,z) |_{\bt \in H^2(X)} \\
 & = e^{\bt / z}\left(1 + \sum_{d > 0} \sum_{i}
q^d e^{d\bt} \bigg\langle \frac{ T_i}{z - \psi_1}, 
1 \bigg\rangle^X_{0, 2,d}  T^i\right).
 \end{split}
\]

In orbifold theory, however, the Chen--Ruan cohomology is never generated by 
divisors except for trivial cases, due to the presence of the twisted sectors.
Therefore, the knowledge of the small $J$-function alone is often not enough to 
reconstruct significant information about the orbifold quantum cohomology.
(Note however that in Section~5 of \cite{CCLT} one way was found to 
circumvent this obstacle for weighted projective spaces.)

We propose the following definition of 
\emph{small $J$-matrix for orbifolds}.

\begin{definition} \label{d:sJ}
For $\bt \in H^2(\cX)$, define $J_g^\cX$ as the cohomology-valued function
\begin{equation}\label{e:sJ}
 \begin{split}
 J^{\cX}_{g} (\bt, z)|_{\bt \in H^2(\cX)} 
  & := \sum_i \left(s_i(\bt)|_{\bt \in H^2(X)}, \ii_g\right)_{CR}^\cX T^i \\
  &= e^{\bt/z}\left(\ii_g + \sum_{d > 0} \sum_i q^d e^{d\bt}  \bigg\langle 
   \frac{T_i}{z - \psi_1}, \ii_g \bigg\rangle^{\cX}_{0,2,d} T^i\right),
 \end{split}
\end{equation}
where $\ii_g$ is the fundamental class on the component $\cX_g$ of $I\cX$.

The \emph{small $J$-matrix} is the matrix-valued function
\[
 J^{\cX}_{small} (\bt, z) =  
 \left[ J^{\cX}_{g,i} (\bt, z) \right]_{g \in G, i \in I}
 =  \left[ (J^{\cX}_{g} (\bt, z), T_i)^{\cX}_{CR} \right]_{g \in G, i \in I}\; ,
\]
where $G$ is the index set of the components of $I\cX$,
$I$ the index for the basis $\{T_i\}_{i \in I}$ of $H^*_{CR}(\cX)$ and
$J^\cX_{g,i}(\bt, z)$ the coefficient of $T^i$ in $J^\cX_g(\bt,z)$.
\end{definition}

\begin{remark} \label{r:1.5}
We believe that the small $J$-matrix is the right replacement of the small 
$J$-function in the orbifold theory, 
for its computability and structural relevance.

Structurally equation~\eqref{e:sol} shows that one needs to specify 
``two-points'' (i.e.~a matrix) in the generating function in order to form 
the fundamental solutions of the Dubrovin connection.
Ideally, one would like to get the full $|I| \times |I|$ fundamental
solution matrix $S = z \nabla J_{big}$ restricted to $\bt \in H^2(\cX)$.
This would give all information about the \emph{small} quantum cohomology.
Unfortunately, a direct computation of $S(\bt)|_{\bt \in H^2(\cX)}$
is mostly out of reach in the orbifold theory.

In the (non-orbifold) case when $H^*(X)$ is generated by divisors,
as shown by A.~Givental, the small $J$-function is often enough to determine 
the essential information for small quantum cohomology.
One can think of the small $J$-function as a
a submatrix of size $1 \times |I|$, indeed the first row vector, of $S$. 

However, in the orbifold theory, the above matrix is not
enough to determine useful information about small quantum
cohomology except in the trivial cases.
We believe that the smallest useful submatrix of $S$ 
is the small $J$-matrix (of size $|G| \times |I|$) defined above.
We will show that it is both computable and relevant to the
structure of orbifold quantum cohomology. In this paper we are able to calculate
the small $J$-matrix of the 
toric orbifold $\cY = [\PP^4/\bar{G}]$, and we use a sub-matrix
of the small $J$-matrix $J^{\cW}_{small}$ to
fully describe the solution matrix $S(\bt)|_{\bt \in H^2(\cX)}$ 
of the mirror quintic $\cW$.
\end{remark}

\section{$J$-function of $[\PP^4/\bar{G}]$} \label{s:2}

\subsection{Inertia orbifold of $[\PP^4/\bar{G}]$} \label{s:2.1}
Let $[x_0, x_1, x_2, x_3, x_4]$ be the homogeneous coordinates of $\PP^4$.
Denote
\[
 \zeta = \zeta_5 := e^{2 \pi \sqrt{-1}/5}.
\]
Let the group $\bar{G} \cong (\ZZ/5\ZZ)^3$ be a (finite abelian) subgroup of 
the big torus of $\PP^4$ acting via generators $e_1, e_2, e_3$:
\begin{equation} \label{e:G}
 \begin{split}
  e_1[x_0, x_1, x_2, x_3, x_4] &= 
  [\zeta x_0, x_1, x_2, x_3, \zeta^{-1} x_4] \\ 
  e_2[x_0, x_1, x_2, x_3, x_4] &= 
  [ x_0, \zeta x_1, x_2, x_3, \zeta^{-1} x_4] \\ 
  e_3[x_0, x_1, x_2, x_3, x_4] &= 
  [ x_0, x_1, \zeta x_2, x_3, \zeta^{-1} x_4] . 
 \end{split}
\end{equation}
Let $\cY =[\PP^4/ \bar{G}]$.
As explained in the Introduction this orbifold plays an instrumental role 
in what follows so we give here a detailed presentation of its corresponding 
inertia orbifold.  

The group $\bar{G}$ can be described alternatively as follows. Let
\[
  {G} := \{(\zeta^{r_0}, \ldots, \zeta^{r_4}) \, | \,
  \sum_{i=0}^4 r_i \equiv 0 \, (\on{mod} 5) \}
\] 
and 
\[
 \bar{G} \cong {G}/ \big\langle (\zeta, \ldots, \zeta) \big \rangle.
\]
The $\bar{G}$-action on $\PP^4$ comes from coordinate-wise multiplication.  
By a slight abuse of notation, we will represent a group element 
$g  \in {G}$ by the power of $\zeta$ in each coordinate:
\[
 G = \{ (r_0, \ldots , r_4) \, | \, \sum_{i=0}^4 r_i \equiv 0 \, (\on{mod} 5), 
  0 \leq r_i \leq 4 \,\forall i \}.
\]
For an element $g \in G$, denote $[g]$ the corresponding element in $\bar{G}$.

Fix an element $\bar{g} \in \bar{G}$. 
Let $g = (r_0, \ldots , r_4) \in {G}$ be such that $[g] =\bar{g}$. Define
\[
  I(g) :=  \left\{ j \in \{0, 1, 2, 3, 4\} \, | \, r_j =0 \right\},
\]
then
\[
  \PP^4_g := \left\{x_j = 0\right\}_{j \notin I(g)} \subset \PP^4
\] 
is a component of $(\PP^4)^{\bar{g}}$.  
From this we see that each element $g \in {G}$ such that 
$[g] =\bar{g}$ corresponds to a connected component $\cY_{g}$ of $I\cY$ 
associated with $\PP^4_g \subset (\PP^4)^{\bar{g}}$. 
Note that if $g$ has no coordinates equal to zero then $\PP^4_g$ 
is empty, and so is $\cY_g$.
This gives us a convenient way of indexing components of $I\cY$.  

We summarize the above discussions in the following lemma.
\begin{lemma} \label{l:2.1}
\begin{equation*} 
  I\cY = \coprod_{g \in S} \cY_{g}\:,
\end{equation*} 
where 
\[
 \cY_g = \{ (x, [g]) \in I\cY \, | \, x \in [\PP^4_g / \bar{G} ]\}
\]
is a connected component and $S$ denotes the set of
all $g = (r_0, \ldots, r_4)$ 
such that at least one coordinate $r_i$ is equal to $0$.

Consequently, a convenient basis $\{T_i\}$ for $H^*_{CR}(\cY)$ is 
\[ 
 \bigcup_{g \in S}   \{\ii_g, \ii_g \tilde{H}, \ldots , 
   \ii_g\tilde{H}^{\dim(\cY_g)}\}.
\] 
\end{lemma}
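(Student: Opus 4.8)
The plan is to prove Lemma~\ref{l:2.1} by unwinding the definition of the inertia stack of a global quotient and then applying the discussion of the preceding subsection. First I would recall from Section~\ref{s:1.1} the general formula $I[V/\bar{G}] = \coprod_{(\bar{g})\in S_{\bar{G}}} [V^{\bar{g}}/C(\bar{g})]$. Since $\bar{G}$ is abelian, every conjugacy class is a singleton, the centralizer $C(\bar{g})$ equals $\bar{G}$ for all $\bar{g}$, and the fixed locus $V^{\bar{g}}$ for $V=\PP^4$ is a disjoint union of linear subspaces indexed by the distinct eigenvalues appearing in the diagonal action of $\bar{g}$. Concretely, for $\bar{g}\in\bar{G}$ I would choose a lift $g=(r_0,\dots,r_4)\in G$ (so $\sum r_i\equiv 0 \pmod 5$, $0\le r_i\le 4$), and observe that the eigenvalue of $g$ acting on the $x_j$-coordinate is $\zeta^{r_j}$; the coordinate subspace on which $g$ acts by the scalar $\zeta^0=1$ is exactly $\PP^4_g=\{x_j=0\}_{j\notin I(g)}$ with $I(g)=\{j : r_j=0\}$. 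One must note that different lifts $g,g'$ of the same $\bar{g}$ differ by $(1,1,1,1,1)$-shifts modulo $5$; the key point is that each such lift isolates a \emph{different} eigenvalue-block of $\bar{g}$ (namely the $\zeta^{-k}$-eigenspace corresponds to the lift $g+(k,\dots,k)$), so the components of $(\PP^4)^{\bar{g}}$ are precisely in bijection with the lifts $g$ that have at least one coordinate $r_i=0$ — which is the defining condition of $S$.

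Next I would assemble these pieces: the components of $I\cY$ are indexed by pairs (a choice of $\bar{g}$, a choice of eigenvalue-block), equivalently by $g\in S$, and the component attached to $g$ is $[\PP^4_g/\bar{G}]$, which is the stack-theoretic description $\cY_g=\{(x,[g])\in I\cY : x\in[\PP^4_g/\bar{G}]\}$ asserted in the lemma. A small point worth spelling out is why $\bar{G}$ (rather than a smaller group) acts on each $\PP^4_g$: because $C(\bar{g})=\bar{G}$ and $\bar{G}$ preserves each coordinate subspace $\PP^4_g$ setwise (it acts diagonally), so the restricted action is well-defined; this requires checking that the $(\zeta,\dots,\zeta)$-quotient identification in the definition of $\bar{G}$ is compatible with restriction to $\PP^4_g$, which it is since that element acts trivially on all of $\PP^4$. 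This establishes the displayed decomposition $I\cY=\coprod_{g\in S}\cY_g$.

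Finally, for the basis statement I would use that $H^*_{CR}(\cY)=H^*(I\cY)=\bigoplus_{g\in S}H^*([\PP^4_g/\bar{G}])$, and that $[\PP^4_g/\bar{G}]$ has the rational cohomology of its coarse space, a projective space of dimension $\dim(\cY_g)=|I(g)|-1$. Since $\bar{G}\subset$ the big torus acts through a quotient that still contains the scalars generated by the hyperplane class, the hyperplane class $H$ on $\PP^4$ restricts to a generator $\tilde{H}$ of $H^2([\PP^4_g/\bar{G}])$ (or rather its image), so $H^*([\PP^4_g/\bar{G}])=\CC[\tilde{H}]/(\tilde{H}^{\dim(\cY_g)+1})$, with $\CC$-basis $\{\ii_g,\ii_g\tilde{H},\dots,\ii_g\tilde{H}^{\dim(\cY_g)}\}$; taking the union over $g\in S$ gives the stated basis of $H^*_{CR}(\cY)$. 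I expect the only genuinely delicate step to be the bookkeeping in the second paragraph's first half: correctly matching lifts $g$ of $\bar{g}$ to eigenvalue-blocks of the diagonal action and verifying that the "at least one $r_i=0$" condition exactly captures the nonempty fixed components (equivalently, that lifts with no zero coordinate give empty $\PP^4_g$ and hence contribute nothing) — everything else is a routine application of the structure of inertia stacks of abelian global quotients together with the well-known cohomology of quotients of projective space by subgroups of the torus.
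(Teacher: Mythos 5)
Your proposal is correct and follows essentially the same route as the paper, which states the lemma as a summary of the discussion immediately preceding it: representing elements of $G$ by tuples $(r_0,\ldots,r_4)$, matching each lift $g$ of $\bar{g}$ with a zero coordinate to the component $\PP^4_g$ of the fixed locus, and discarding lifts with no zero coordinate as giving empty components. Your extra bookkeeping (the eigenvalue $\zeta^{-k}$ corresponding to the shifted lift $g+(k,\ldots,k)$, and the standard computation of $H^*([\PP^4_g/\bar{G}];\CC)$) correctly fills in details the paper leaves implicit.
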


\subsection{$J$-functions} \label{s:2.2}

Recalling a basic fact about global quotient orbifolds, 
a map of orbifolds $f: \cC \to [\PP^4/\bar{G}]$ can be identified with 
a principal $\bar{G}$-bundle $C$, and a $\bar{G}$-equivariant map 
$\tilde{f}: C \to \PP^4$ such that the following diagram commutes:
\footnote{Technically f is identified with an equivalence class of such 
objects.}
\begin{equation}\label{e:cover}
 \xymatrix{C \ar[d]_{\pi_{C}}\ar[r]^{\tilde{f}} & \PP^4 \ar[d]_{\pi_{\PP^4}}\\
  \cC\ar[r]^{f} & [\PP^4/\bar{G}].}
\end{equation}

\begin{lemma} \label{l:2.2}
(i) The map $\tilde{f}$ is representable if and only if 
$C$ is a nodal curve with each irreducible component a smooth
variety.

(ii) There do not exist representable orbifold morphisms $f: \cC \to \cY$ 
from a genus $0$ orbifold curve $\cC$ with only one orbifold marked point.
\end{lemma}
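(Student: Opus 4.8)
\textbf{Proof proposal for Lemma~\ref{l:2.2}.}

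The plan is to analyze the two parts using the correspondence \eqref{e:cover} between orbifold maps $f\colon \cC \to [\PP^4/\bar G]$ and pairs $(C \to \cC, \tilde f)$ consisting of a principal $\bar G$-bundle together with an equivariant map $\tilde f\colon C \to \PP^4$. For part (i), I would work \'etale-locally near a point of $\cC$ with nontrivial isotropy. Near an orbifold marked point $\cC$ looks like $[\spec\CC[x]/\mu_r]$, so $C$ looks like $\spec\CC[w]$ with $w^r = x$ and $\mu_r$ acting by $w \mapsto \zeta w$; near a balanced node $\cC$ looks like $[\spec(\CC[x,y]/(xy))/\mu_r]$ with $\mu_r$ acting by $(x,y)\mapsto(\zeta x,\zeta^{-1}y)$, and the corresponding cover is $\spec(\CC[u,v]/(uv))$ with $u^r = x$, $v^r = y$ — which is \emph{not} a smooth variety, it is a node. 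The key point is that $\tilde f$ (equivalently $f$) is representable at a marked/nodal point precisely when the induced map on isotropy groups is injective, and for $\cC \to [\PP^4/\bar G]$ this isotropy injects into $\bar G$ iff the $\mu_r$-action on the fiber of $C$ is faithful, i.e.\ iff $C$ has no orbifold structure there at all, i.e.\ iff $C$ is an honest scheme at that point. So representability of $\tilde f$ forces $C$ to be a nodal curve whose components are smooth varieties (no residual stacky structure), and conversely if $C$ is such a nodal curve the induced isotropy maps are injective. I should be a little careful about the direction of the statement: I want to show $C$ has trivial generic stabilizer on each component and the local pictures above are the only possibilities, so ``$C$ smooth variety on each component $\Leftrightarrow$ no stacky structure survives on $C$ $\Leftrightarrow$ $f$ representable.''

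For part (ii), I would argue by a degree/monodromy obstruction on the $\bar G$-cover. Suppose $f\colon \cC \to \cY$ is a representable map from a genus zero orbifold curve with exactly one orbifold marked point $p$, of isotropy order $r$, mapping to the component $\cY_g$ of $I\cY$; then by part (i) the associated cover $C \to \cC$ is a nodal curve with smooth components, and in fact (since $\cC$ is irreducible of genus zero with one stacky point, contracting makes $|\cC| \cong \PP^1$) $C \to \PP^1$ is a connected degree-$|\bar G|$ cover branched only over the single point $\underline p \in \PP^1$. But a connected covering of $\PP^1$ branched over exactly one point must be trivial (the fundamental group of $\PP^1 \setminus \{\text{pt}\}$ is trivial), so $C$ is a disjoint union of copies of $\PP^1$, contradicting connectedness unless $|\bar G| = 1$, which it is not. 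Equivalently: the monodromy of the $\bar G$-cover around the one branch point is a single element $h \in \bar G$, and the product-of-local-monodromies relation forces $h = e$, so the marked point is not actually orbifold — contradiction. I should phrase this so it also covers the case where $\cC$ has nodes: a genus zero \emph{prestable} curve is a tree of $\PP^1$'s, and restricting to any component the same one-branch-point argument applies to the piece of $C$ over it, but I must make sure the bookkeeping of isotropy at nodes doesn't secretly let the monodromy be nontrivial — at a balanced node the two branches carry inverse monodromy, so the total still multiplies to $e$.

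The main obstacle I anticipate is part (ii): making the ``branched cover of $\PP^1$ over one point is trivial'' argument fully rigorous in the stacky/prestable setting, where $C$ need not be smooth and the relevant statement is really about the monodromy representation $\pi_1^{orb}(\cC) \to \bar G$ being trivial when $\cC$ is genus zero with a single orbifold point. One clean way is to invoke that $\pi_1^{orb}$ of a genus zero orbifold curve is generated by the loops around the stacky points with the single relation that their product (in an appropriate order, with nodal contributions cancelling in pairs) is trivial; with one stacky point this forces the generator, hence the whole group, hence the cover's structure group image, to be trivial — so the image element $g$ indexing $\cY_g$ is the identity and $p$ was not genuinely an orbifold point. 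Part (i) is more routine, essentially a local computation at marked points and nodes, so I would keep it brief and spend the bulk of the argument on the $\pi_1^{orb}$/monodromy count in part (ii).
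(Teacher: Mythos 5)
Your proposal is correct and takes essentially the same route as the paper: part (i) reduces to the observation that representability is equivalent to injectivity of the induced maps on isotropy groups, which forces the cover $C$ to carry no residual stacky structure, and part (ii) is the paper's argument that a connected cover of a genus-zero curve branched over a single point must be trivial (so the lone orbifold point has trivial monodromy), extended to reducible curves by induction over the tree of components using balancedness of the nodes. The paper's own proof is a two-sentence sketch of exactly this argument, so your write-up simply supplies the details it omits.
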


\begin{proof}
(i) follows immediately from the definition of representability.

(ii) follows from (i):
If $\cC$ is irreducible, this is because there do not exist smooth covers of 
genus $0$ orbifold curves with only one point with nontrivial isotropy.  
An induction argument then shows that the same is true of reducible curves 
with only one orbifold marked point (we assume always that our nodes be balanced).
\end{proof}

A line bundle on $[\PP^4/\bar{G}]$ can be identified with a $\bar{G}$-equivariant
line bundle on $\PP^4$.
Therefore, the Picard group on $[\PP^4/\bar{G}]$ is a $\bar{G}$-extension of $\ZZ$.
Let $L$ be \emph{any} line bundle on $[\PP^4/\bar{G}]$ such that
$\pi_{\PP^4}^*L = H$, where $H$ is the hyperplane class on $\PP^4$.
By \eqref{e:cover}, we have the following equality
\[
  \int_{\cC} f^*(L) = \frac{1}{125} \int_{C} \tilde{f}^*(H).
\]
We define the degree of a map $f:\cC \to \cY$ by
\[
  d := \frac{1}{125} \int_{C} \tilde{f}^*(H).
\]

This also allows us to determine necessary conditions on the triple $d$,
$h = (r_0(h), \ldots, r_4(h))$ and $g = (r_0(g), \ldots, r_4(g))$ for 
\[
  \sMbar_{0,h,g}(\cY, d) := 
  \sMbar_{0,2}(\cY, d) \cap ev_1^{-1}(\ii_h) \cap ev_2^{-1}(\ii_g)
\]
to be nonempty.  

\begin{proposition}\label{p:mapsallowed} 
The space $\sMbar_{0,h,g}(\cY, d) $ is nonempty only if 
\begin{enumerate}
\item[(i)]  $[h] = [g]^{-1}$ in $\bar{G}$;
\item[(ii)]  $r_i(h) + r_i(g) \equiv 5d \,(\on{mod} 5)$ or equivalently
${\displaystyle \langle d \rangle = \langle  (r_i(h) + r_i(g))/5 \rangle }$ for $0 \leq i \leq 4$.
\end{enumerate}
\end{proposition}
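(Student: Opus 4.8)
The plan is to reduce the statement to elementary constraints coming from (a) the representability requirement for orbifold stable maps and (b) a degree computation on the $\bar G$-cover $C$. Recall from Lemma~\ref{l:2.2} that a map $f\colon \cC \to \cY$ is the same data as a principal $\bar G$-bundle $C \to \cC$ together with a $\bar G$-equivariant map $\tilde f\colon C \to \PP^4$, and that representability forces each component of $C$ to be a smooth variety. I would first treat the case where $\cC$ is irreducible, so $C$ is a smooth connected curve carrying a free $\bar G$-action with quotient $\cC$, and $\tilde f\colon C \to \PP^4$ is $\bar G$-equivariant of degree $125 d$. The two orbifold marked points $p_1, p_2$ on $\cC$ have isotropy groups generated by $[h]$ and $[g]$ respectively; since $\bar G$ is abelian these generate cyclic subgroups of the \emph{abelian} group $\bar G$, and the local structure of the $\bar G$-cover near $p_i$ together with the balanced node condition (there are no nodes here) pins down the monodromy.

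For part (i): on the connected cover $C$ the monodromy around the two points above $p_1$ and $p_2$, together with the (lack of) monodromy elsewhere, must multiply to the identity in $\pi_1$ of the punctured $\PP^1$, hence in $\bar G$; with only two orbifold points this gives $[h]\cdot[g] = e$, i.e. $[h] = [g]^{-1}$. (Equivalently, this is exactly the statement that $\sMbar_{0,h,g}(\cY,d)$ lies in a single component of $I\cY \times I\cY$ compatible with the gluing/involution, already implicit in the Chen--Ruan formalism: a two-pointed invariant vanishes unless the sectors are mutually inverse.) For part (ii): I would write $\tilde f = [f_0 : \cdots : f_4]$ with the $f_j$ sections of $\sO_C(125d)$ (after a suitable trivialization), not all identically zero. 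The $\bar G$-equivariance of $\tilde f$ means that going around $p_1$ the coordinate $x_j$ is multiplied by $\zeta^{r_j(h)}$ and going around $p_2$ by $\zeta^{r_j(g)}$. Lifting to the cover $C$, which near these points is totally ramified of order equal to the isotropy order, the section $f_j$ must vanish (or have a pole) to order congruent to $r_j(h)$ modulo the ramification index at the preimage of $p_1$, and similarly $r_j(g)$ at the preimage of $p_2$. Summing these local order conditions over $C$ and using $\deg f_j^* \sO(1) = 125 d$ on $C$ — equivalently $\deg f^* L = d$ on $\cC$ after dividing by $125$ — gives the numerical relation $r_j(h) + r_j(g) \equiv 5d \pmod 5$, i.e. $r_j(h)+r_j(g) \equiv 0 \pmod 5$ is \emph{not} quite right; rather the fractional parts match: $\langle d\rangle = \langle (r_j(h)+r_j(g))/5\rangle$ for every $j$, which is condition (ii).

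The main obstacle, and the step I would spend the most care on, is the bookkeeping of the local monodromy-to-vanishing-order translation: one must be precise about the identification of the isotropy representation at $p_i$ (an element of $\bar G$ lifted to $G$, i.e. a choice of $(r_0,\dots,r_4)$ with $\sum r_i \equiv 0$), about which line bundle $L$ on $\cY$ one pulls back (the choice is only well-defined up to a $\bar G$-character, but the degree $d = \frac1{125}\int_C \tilde f^* H$ is well-defined by \eqref{e:cover}), and about how the ambiguity $\langle(\zeta,\dots,\zeta)\rangle$ in $G \twoheadrightarrow \bar G$ interacts with the shift by $5d$ — note that changing the lift $g \mapsto (\zeta,\dots,\zeta)\cdot g$ shifts every $r_j$ by $1$, hence shifts $\sum_j(r_j(h)+r_j(g))$ by $10 \equiv 0$, consistently, and shifts each individual congruence $r_j(h)+r_j(g)\equiv 5d$ in a way absorbed by the simultaneous change of lift of $h$ (using $[h]=[g]^{-1}$). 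I would verify this compatibility explicitly. Finally, I would handle reducible $\cC$: by Lemma~\ref{l:2.2}(i) each component of $C$ is smooth, and at each balanced node the monodromies from the two branches are inverse, so the global degree and the two boundary monodromy conditions still combine to give exactly (i) and (ii) — the nodal contributions cancel in the sum of local orders, just as in the classical (non-orbifold) computation of when $\Mbar_{0,2}(\PP^n,d)$ is nonempty.
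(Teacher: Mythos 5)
Your strategy is the paper's: monodromy of the principal $\bar{G}$-bundle gives (i), a degree count on the cover gives (ii), and the nodal case reduces to irreducible components via balancedness. Part (i) is fine as you state it. For part (ii), however, your setup contains a concrete error that, taken literally, proves the wrong congruence. The cover $C$ is neither connected nor acted on freely by $\bar{G}$: the monodromy lands in the cyclic group $\langle [h]\rangle$ of order $r\le 5$, so $C$ breaks into $|\bar{G}|/r$ components, each a $\PP^1$ on which $\tilde f$ has degree $rd$ (for $h\ne 0$ this is $5d$), and the points of $C$ over $p_1,p_2$ have nontrivial stabilizer. If you sum vanishing orders of $f_j$ against the total degree $125d$ over all of $C$, as you write, you get $r_j(h)+r_j(g)\equiv 125d\equiv 0\pmod 5$ (since $125d$ is a multiple of $25$) --- which is precisely the statement you then flag as ``not quite right.'' The correct bookkeeping must be done on a single component $C'\cong\PP^1$ of degree $5d=5\lfloor d\rfloor+5\langle d\rangle$: zeros away from the two fixed points come in free $\mu_5$-orbits contributing multiples of $5$, the fixed points contribute $r_j(h)$ and $r_j(g)$ mod $5$, and the residue $5\langle d\rangle$ is exactly what produces condition (ii). As written, your self-correction to the fractional-part statement is asserted rather than derived from your degree count.

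The second issue is the one you yourself call the main obstacle and then defer: identifying the vanishing orders (or characters) at the two special points with the entries $r_j(h)$, $r_j(g)$ of \emph{chosen} lifts, given that lifts are only defined modulo $(\zeta,\dots,\zeta)$ and the linearization of $\sO(1)$ only up to an overall character. The paper resolves this concretely: it chooses $i\in I(h)$ and $j\in I(g)$ with $x_i(q_1)\ne 0$ and $x_j(q_2)\ne 0$, reads off the characters of the $[h]$-action on the fibers of $(f')^*\sO(1)$ at the two fixed points as $(\zeta^{r_i(h)},\zeta^{-r_j(h)})$, compares with the weights $(w,w+5d)$ of an arbitrary torus linearization to get $r_i(h)+r_j(h)\equiv 5d$, and only then uses $[h]=[g]^{-1}$ to convert $r_j(h)$ into $r_i(g)$. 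Some such explicit normalization is indispensable; without it the congruence is determined only up to the common shift you describe, and (ii) is the part of the proposition that actually pins down $\langle d\rangle$. Your observation that (i) forces $r_i(h)+r_i(g)\pmod 5$ to be independent of $i$, and your treatment of the reducible case via balanced nodes, both match the paper and are correct.
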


\begin{proof}  
We will first consider the case where the source curve is irreducible.
Assume that there exists a map $\{f: \cC \to \cY\}$ in 
$\sMbar_{0,h,g}(\cY, d)$ such that $\cC$ is non-nodal.  
Consider the principal $\bar{G}$-bundle $\pi_C: C \to \cC$.  
After choosing a generic base point $x \in \cC$ and a point $\tilde{x}$ in 
$\pi_C^{-1}(x)$, we get a homomorphism $\phi: \pi_1(\cC, x) \to \bar{G}$.  
We can specify generators $\rho_1$, and $\rho_2$ of $\pi_1(\cC, x)$ 
such that $\rho_i$ is the class of loops wrapping once around $p_i$ 
in the counterclockwise direction.  
Then $\phi(\rho_1) = [h]$ and $\phi(\rho_2) = [g]$.  
Because $\rho_1\cdot \rho_2 = 1$ in $\pi_1(\cC,x)$, 
it must be the case that $[h]\cdot [g] = 1$ in $\bar{G}$.
This proves (i) for $\cC$ non-nodal.

Next we will show (ii) in the case where $\cC$ is non-nodal.
To see this, note that the only smooth connected cover of $\cC$ is isomorphic 
to $\PP^1$. This cover is degree $r := |[h]|$, 
so $C$ must consist of $|\bar{G}|/r$ components, each isomorphic to $\PP^1$.  
In the case $h = (0,0,0,0,0)$, this implies that $C$ has 125 components, 
and so $d$ is an integer.  Thus Condition (ii) holds trivially.

If $h \neq (0,0,0,0,0)$, then $r=5$. 
First note that (i) implies that $r_i(h)+r_i(g) \, (\on{mod} 5)$ is the same
for any $i$. Thus, we only need to prove the statement for one $i$.
Let $\mu_5$ be the group generated by $[h]$.
Let $C' \cong \PP^1$ be one component of $C$ and 
let 
\[
 f' := \tilde{f} \big|_{C'} : C' \to \PP^4
\] 
be the $\mu_5$-equivariant morphism
induced from the $\bar{G}$-equivariant morphism $\tilde{f}: C \to \PP^4$.
$(f')^*(\sO(1))$ is a degree $5d$ line bundle on $C' =\PP^1$. 
Therefore, any lifting of the torus action on $\PP^1$ will have \emph{weights}
$(w, w+5d)$ at the fibers of the 2 fixed points.
Call these two fixed points $p'_1$ and $p'_2$.
Since $\mu_5 = \langle[h]\rangle$ is a subgroup of the torus,
the \emph{characters} of the $[h]$-action at the fibers of the 2 fixed points 
must be $(\zeta^w, \zeta^{w+5d})$, for some $w$ in $\{0, \ldots , 4\}$.

Let $q_1:= f'(p'_1)$ and $q_2 := f'(p'_2)$. 
By assumption, $q_1 \in \PP^4_h$, $q_2 \in \PP^4_g$.
Choose an $i \in I(h)$ and $j \in I(g)$ such that 
$i \neq j$, $x_i(q_1) \neq 0$ and $x_j(q_2) \neq 0$.
The action of $[h]$ on the fiber over $q_1$ and $q_2$ can be chosen to be
$(\zeta^{r_i(h)}, \zeta^{- r_j(h)})$.
By the above weight/character arguments,  
\[
 r_i(h) - (- r_j(h)) \equiv 5d \, (\on{mod} 5).
\]
Since $j \in I(g)$ and $i \in I(h)$, 
\[
 r_j(h) = r_j(h) - r_i(h) = r_i(g) - r_j(g) = r_i(g), 
\]
so we can rewrite the above as $r_i(h) + r_i(g) \equiv 5d \, (\on{mod} 5)$.

The nodal case follows similarly. Consider a nodal curve $f: \cC \to \cY$.  
Let $\cC_1, \ldots, \cC_n$ be the irreducible components connecting 
$p_1$ to $p_2$.  
It follows from Lemma~\ref{l:2.2}, each of these components will have 
2 orbifold points (at either nodes or marked points) and these will be the 
only points in $\cC$ with nontrivial orbifold structure.  
The above calculation for irreducible components plus the condition that 
all nodes be balanced in this situation then implies the claim.  
\end{proof}

Once condition (i) is satisfied, the degree of maps allowed is thus 
determined by the quantity \[ d(h,g) := \langle 
 (r_i(h) + r_i(g))/5 \rangle .\]  Note that this number remains constant as $i$ varies.

We will define generating functions related to the $J$-functions $J^\cY_g$ which 
isolate the $2$-point invariants of $ \sMbar_{0,h,g}(\cY, d)$.  Let 
\[
 S(d,h) :=\{(b,k)\,\,|\,\,\, 0 < b \leq d, \, \,\,\, 0 \leq k \leq 4, \,\,\,\,  
 \langle b \rangle = r_k(h)/5   \},
\] 
and let 
\[
  c(d, h) := \big|S(d,h)\big|.
\]

Given $h, g \in G$ such that $[h] = [g]^{-1}$, define 
\[ 
 Z_{h,g} := \sum_{d} Q^{c(d,h)} \sum_i \bigg\langle\frac{T^h_i}{z - \psi_1}, 
 \ii_g\bigg\rangle^\cY_{0,2,d} T^i_h,
\]
where $\{T_i^h\}$ is a basis for $H^*(\cY_h)$, and $\{T^i_h\}$ is the dual 
basis under the Chen-Ruan orbifold pairing.
(The motivation behind this choice of exponent for $Q$ will become clear 
in what follows: it is chosen to simplify the recursion satisfied by our 
generating function). 
Notice that by the above lemma, the only degrees which contribute to 
$Z_{h,g}$ are $d$ such that $\langle d \rangle = d(h,g)$.  
Finally, let 
\[
  Z_{g} := \ii_g + \sum \limits_{\{h | \,[h] = [g]^{-1}\}} Z_{h,g}.
\]  

Let $T = (\CC^*)^5$ (or $\CC^*$) act on $\CC^5$ with (generic) weights 
$-\lambda_0, \ldots , -\lambda_4$.  
This induces an action on $\PP^4$ and $\cY$.  
Furthermore there is an induced $T$-action on the inertia orbifold $I\cY$ 
and on $\sMbar_{0,2}(\cY, d)$.  
We will consider an equivariant analogue $Z^T_g$ of $Z_g$ defined by replacing
the coefficients of $Z_g$ with their 
equivariant counterparts:
\[
 Z^T_{h,g} := \sum_{d, i} Q^{c(d,h)} \bigg\langle\frac{T^h_i}{z - \psi_1}, 
 \ii_g\bigg\rangle^{\cY, T}_{0,2,d} T^i_h, \quad
Z^T_g := \ii_g + \sum \limits_{\{h | \,[h] = [g]^{-1}\}} Z^T_{h,g}.
\] 
where $\{T^h_i\}$ is now a basis of the equivariant cohomology $H^*_T(\cY_h)$. 

Consider the cohomology valued functions
\begin{equation} \label{e:2.2.2}
 Y^T_{h,g} :=  \sum_{\{d \,\vline \langle d \rangle = d(h,g)\}} Q^{c(d,h)} 
 \frac{\ii_{h^{-1}}}{\prod \limits_{(b,k)\in S(d,h)} (bz + H - \lambda_k)}, 
\end{equation}
where 
\[ 
  h^{-1} := (-r_0(h), \ldots, -r_4(h)) \,(\on{mod}5).
\]  
As with $Z$, let 
\begin{equation} \label{e:2.2.3}
 Y^T_g  := \ii_g + \sum \limits_{\{h | \,[h] = [g]^{-1}\}} Y^T_{h,g}.
\end{equation}

\begin{theorem}\label{t:Zformula}
We have the equality in equivariant cohomology: 
\[
 Z_g^T = Y^T_{g}.
\]
In particular, taking the nonequivariant limit, 
we conclude that $Z_g = Y_g$, 
(where $Y_g$ is the obvious non-equivariant limit of $Y^T_g$.)
\end{theorem}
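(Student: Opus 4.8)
The plan is to prove the equality $Z_g^T = Y_g^T$ by showing that both sides are uniquely characterized by the same recursion and initial conditions, following the now-standard localization strategy of Givental. First I would express $Z^T_{h,g}$ via Atiyah--Bott localization on $\sMbar_{0,2}(\cY, d)$ with respect to the torus $T$. The fixed loci of the $T$-action are indexed by the combinatorial data of the graph: by Lemma~\ref{l:2.2}(i) the source curve $C$ upstairs is a union of $\PP^1$'s, so a $T$-fixed stable map contracts all but a chain of rational components, each mapping to a coordinate line of $\PP^4$ with the two special points over the fixed points $p_k \in \PP^4$. The key simplification, exactly as in the manifold case, is that the insertion $T^h_i/(z-\psi_1)$ forces attention to the component of the chain carrying the first marked point; localizing the $\psi_1$-class there produces a sum over the ``first leg'' whose weight at the initial vertex is $H - \lambda_k$ (for $H$ restricted to that fixed point) and whose successive edge contributions, after incorporating the orbifold structure at the marked point recorded by $h$, are precisely the factors $bz + H - \lambda_k$ with $(b,k) \in S(d,h)$. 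This is where the peculiar choice of Novikov exponent $c(d,h) = |S(d,h)|$ pays off: it makes the degree bookkeeping match the number of edge factors, so the recursion closes.

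The second step is to extract from this localization a recursion relation for $Z_g^T$ in the variable $z$: the residue of $Z^T_{h,g}$ at $z = -(H-\lambda_k)/b$ is expressed in terms of lower-degree two-point invariants, which by the same localization are governed by $Z^T_{g'}$ for the shifted sector $g'$ obtained by peeling off one edge. One then checks by a direct computation that $Y_g^T$ — being an explicit rational function in $z$ with simple poles exactly at $z = -(H-\lambda_k)/b$ for $(b,k)$ running over the relevant sets — satisfies the identical residue recursion, and that both sides agree in the degree-zero term (namely $\ii_g$, reflecting the fact that the untwisted-to-$g$ two-point degree-zero invariant is the orbifold pairing) and in the appropriate $z \to \infty$ asymptotics. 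Since a power series in $1/z$ with prescribed principal parts at a finite set of poles and prescribed constant term is uniquely determined, $Z_g^T = Y_g^T$ follows. Taking $\lambda_k \to 0$ gives the nonequivariant statement $Z_g = Y_g$, noting that the nonequivariant limit of $Y_g^T$ exists because the apparent poles in the $\lambda_k$ cancel.

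The main obstacle I anticipate is the careful orbifold bookkeeping in the localization: unlike the manifold case, the fixed-point contributions involve nontrivial automorphism groups (gerbe structure over the fixed loci, the $\mu_r$'s at nodes and marked points), and one must correctly identify the characters of the $[h]$-action at the fibers over the two fixed points — the weight-versus-character analysis already appearing in the proof of Proposition~\ref{p:mapsallowed} — and verify that these match the factors $bz + H - \lambda_k$ on the nose, including the constraint $\langle b \rangle = r_k(h)/5$ that pins down which $b$ can occur on a leg ending at the fixed point $p_k$. A secondary technical point is to handle the nodal fixed loci and the splitting axiom so that the recursion is genuinely closed under the operation of removing one edge; as in Proposition~\ref{p:mapsallowed} this should reduce to the irreducible case plus the balancedness of nodes, but it requires care to see that no extra contributions arise. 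Once the dictionary between graph-localization data and the set $S(d,h)$ is set up correctly, the verification that $Y_g^T$ solves the recursion is a routine partial-fractions computation.
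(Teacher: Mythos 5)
Your overall strategy is the same as the paper's: localize, derive a recursion for $Z^T_g$ indexed by the ``first leg'' of the fixed-point graph, check by partial fractions that $Y^T_g$ satisfies the identical recursion with the same initial term, and conclude by uniqueness. Most of what you describe matches the paper's proof step for step, including the identification of the edge factors with the set $S(d,h)$ and the role of $c(d,h)$ in closing the recursion via $c(d,h)-c(d',h)=c(d-d',h')$.

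There is, however, one genuine gap: your uniqueness step asserts that a power series in $1/z$ with prescribed principal parts at the recursion poles and prescribed constant term is uniquely determined, but this requires knowing that each positive-$Q$-degree coefficient of $Z^T_{i,g}$ has \emph{no other contribution} --- in particular no part regular at the recursion poles coming from fixed loci where the first marked point lies on a \emph{contracted} component. Such loci exist, and for the naive two-point function they contribute terms that are polynomial in $1/z$ and invisible to the residues at $z=(\lambda_k-\lambda_i)/d'$, so the characterization you invoke would fail. The paper's proof devotes a separate Claim to killing exactly these graphs: by Lemma~\ref{l:dime} the exponent $c(d,h)$ forces the insertion $\psi_1^{c(d,h)-1}$ in every nonconstant $Q$-coefficient, and a dimension count ($c(d,h)-1\geq 5\lfloor d\rfloor-1>\lfloor d\rfloor-1\geq\dim M_{v_0}$) shows this power of $\psi_1$ vanishes on any contracted component carrying $p_1$. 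This is the second, and arguably decisive, role of the exponent $c(d,h)$; you attribute its purpose only to matching the number of edge factors, and you locate the residual difficulty in the splitting axiom at nodes rather than in these ghost-component contributions. Without this vanishing statement the recursion is not an exact identity and your uniqueness argument does not close. A secondary, minor point: the vanishing of the contracted-component contributions also makes the existence of the nonequivariant limit of $Z^T_g$ automatic, and the limit of $Y^T_g$ is immediate since every factor $bz+H-\lambda_k$ has $b>0$ with $H$ nilpotent, so no cancellation of poles in the $\lambda_k$ needs to be invoked.
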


\begin{remark}
For those who are familiar with the computation of the small
$J$-function for
toric manifolds \cite{aG2}, the generating functions $Z$, as indicated above, 
play the role of the $J$-function.
The hypergeometric-type functions $Y$ then take the place of the $I$-function.
Recall that one way of formulating the computation of genus zero GW invariants 
is to say that the $J$-function is equal to the $I$-function after 
a change of variables, called the \emph{mirror map}.
In the present case, the mirror map is trivial.
\end{remark}

\subsection{Proof of Theorem~\ref{t:Zformula}} \label{s:2.3}

The proof follows from a localization argument similar in spirit to that 
in \cite{aG2}.  
The strategy is to apply the Localization Theorem 
(after inverting the equivariant characters $\lambda_0, \ldots, \lambda_4$ in the 
ring $H^*_{CR,T}(\cY)$)
on the equivariant generating functions
to determine a recursion satisfied by $Z^T_g$.
This recursion relation in fact determines $Z^T_g$ up to the 
constant term in the Novikov variables.
We then show that $Y^T_g$ satisfies the same recursion.
Since $Z^T_g$ and $Y^T_g$ have the same initial term and the same recursion
relation, $Z^T_g = Y^T_g$.

\subsubsection{a lemma on $c(d,h)$}

We will first explain the seemingly strange appearance of the exponents 
$c(d,h)$ in the definition of $Z_{h,g}$.  

\begin{lemma}\label{l:dime}
Let 
\[
 m_d = \dim(\sMbar_{0,h,g}(\cY, d)),
\] 
then if $[h] = [g]^{-1}$ and $\langle d \rangle = d(h,g)$, we have 
\[
 c(d,h) = m_d - \dim(\cY_h) + 1.
\]
\end{lemma}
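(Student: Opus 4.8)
The plan is to compute both sides of the claimed identity $c(d,h) = m_d - \dim(\cY_h) + 1$ directly from the virtual dimension formula and the combinatorial definition of $c(d,h)$, and then match them. First I would write down $m_d = \dim \sMbar_{0,h,g}(\cY,d)$ using the virtual dimension formula from Section~\ref{s:1.2}: for a genus $0$, two-pointed stable map to $\cY$ of degree $d$ landing in the twisted sectors $\cY_h$ and $\cY_g$, we have
\[
  m_d = 2 + (-1)(\dim\cY - 3) + \langle c_1(T\cY), d\rangle - \on{age}(\cY_h) - \on{age}(\cY_g).
\]
Since $\dim\cY = 4$, the term $(-1)(\dim\cY-3) = -1$, and $\langle c_1(T\cY), d\rangle = 5d$ because $c_1(T\PP^4) = 5H$ and degree is normalized so that $\int_\cC f^*L = d$. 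So $m_d = 1 + 5d - \on{age}(\cY_h) - \on{age}(\cY_g)$.

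Next I would record the two facts needed to simplify the right-hand side. One: $\dim\cY_h = |I(h)| - 1$, the dimension of $[\PP^4_h/\bar G]$, where $I(h)$ is the set of zero coordinates of $h$. Two: the age of $\cY_h$ is $\on{age}(\cY_h) = \sum_{j\notin I(h)} r_j(h)/5$, since the generator $[h]$ acts on the normal directions (those coordinates $x_j$ with $r_j(h)\neq 0$) with eigenvalue $\zeta^{r_j(h)}$, and on $T\PP^4_h$ trivially; one must be slightly careful about the quotient by the diagonal, but the homogeneous-coordinate normalization handles this cleanly. Similarly $\on{age}(\cY_g) = \sum_{j\notin I(g)} r_j(g)/5$. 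Then, using condition (i) $[h] = [g]^{-1}$, for each index $k$ one has $r_k(h) + r_k(g) \equiv 0 \pmod 5$ unless... no — actually $[h]=[g]^{-1}$ in $\bar G = G/\langle(\zeta,\dots,\zeta)\rangle$ means $r_k(h) + r_k(g)$ is \emph{constant} mod $5$ in $k$, equal to some fixed value, and by condition (ii) this constant equals $5d \pmod 5$, i.e. $\langle d\rangle = \langle (r_k(h)+r_k(g))/5\rangle =: d(h,g)$. This is the key arithmetic relation tying $r(h)$, $r(g)$, and $d$ together.

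Then I would compute $c(d,h) = |S(d,h)|$ where $S(d,h) = \{(b,k) : 0 < b \le d,\ 0\le k\le 4,\ \langle b\rangle = r_k(h)/5\}$. For a fixed $k$, the number of integers $b$ with $0 < b \le d$ and $\langle b \rangle = r_k(h)/5$ is exactly $d - r_k(h)/5$ when $r_k(h)/5 \le \langle d \rangle$... more precisely it is $\lfloor d - r_k(h)/5\rfloor$ adjusted by whether $r_k(h)=0$; a short case analysis (using $\langle d\rangle = d(h,g)$ and comparing $r_k(h)/5$ against the fractional part of $d$) shows the count for index $k$ is $d - \on{age}$-contribution$_k$ type expression. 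Summing over the five values of $k$ and collecting terms, one gets $c(d,h) = 5d + 1 - |I(h)| - \on{age}(\cY_h) - \on{age}(\cY_g)$, possibly up to reorganizing which is exactly $m_d - (|I(h)|-1) = m_d - \dim\cY_h$, so $c(d,h) = m_d - \dim\cY_h + 1$. Wait — I should double check the ``$+1$ versus not'' bookkeeping; the natural outcome is $c(d,h) = m_d - \dim\cY_h$, and I'd expect the extra $+1$ in the statement to come from the fact that $S(d,h)$ uses the strict inequality $0 < b$ together with the $\ii_{h^{-1}}$ shift, so the careful counting of boundary cases $b = d$ and $r_k = 0$ is where the $+1$ emerges.

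The main obstacle I expect is precisely this boundary bookkeeping in counting $|S(d,h)|$: handling the endpoint $b=d$, the coordinates with $r_k(h)=0$ (which are in $I(h)$ and should \emph{not} contribute, matching the $-|I(h)|$ term), and making sure the fractional-part conditions $\langle b\rangle = r_k(h)/5$ interact correctly with $\langle d\rangle = d(h,g)$. Everything else — the virtual dimension formula, $\dim\cY_h = |I(h)|-1$, the age computation — is routine once set up. So the proof is essentially: expand $m_d$, expand $c(d,h)$ by a careful lattice-point count in each coordinate, invoke conditions (i) and (ii) to relate the $r_k$'s and $d$, and observe the two expressions agree.
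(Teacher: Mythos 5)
Your strategy---expand $m_d$ via the virtual dimension formula, count $|S(d,h)|$ coordinate by coordinate, and use the arithmetic relation between $r_k(h)$, $r_k(g)$ and $d(h,g)$---is the same as the paper's. But the execution has two concrete problems, and the second is fatal because the entire content of the lemma is the precise constant. First, your virtual dimension is off by $2$: the genus-zero formula carries $+(\dim\cX-3)$, not $-(\dim\cX-3)$ (compare $\dim\sMbar_{0,n}(\PP^4,d)=5d+n+1$); the displayed formula in Section~\ref{s:1.2} has a sign typo $(g-1)$ for $(1-g)$, which you inherited. The correct value, and the one the paper's proof actually uses, is $m_d = 5d+3-\on{age}(h)-\on{age}(g)$, not $5d+1-\cdots$.

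Second, you never actually perform the count of $|S(d,h)|$, and the closed form you report is wrong. Using $[h]=[g]^{-1}$ to write $r_k(g)/5 = d(h,g)-r_k(h)/5$ or $1+d(h,g)-r_k(h)/5$ according to whether $d(h,g)\ge r_k(h)/5$ or not, one finds for each fixed $k$ that the number of admissible $b$ in $(0,d]$ is $\lfloor d\rfloor+1$ if $r_k(h)\neq 0$ and $d(h,g)\ge r_k(h)/5$, and $\lfloor d\rfloor$ otherwise; hence $c(d,h)=5\lfloor d\rfloor+|\{k:\, d(h,g)\ge r_k(h)/5\}|-|I(h)|$, the subtraction occurring because the indices with $r_k(h)=0$ always satisfy the inequality yet contribute only $\lfloor d\rfloor$ each. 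Combined with $m_d=5\lfloor d\rfloor+|\{k:\, d(h,g)\ge r_k(h)/5\}|-2$, this gives $c(d,h)=m_d+2-|I(h)|=m_d-\dim(\cY_h)+1$. By contrast, your claimed identity $c(d,h)=5d+1-|I(h)|-\on{age}(h)-\on{age}(g)$ is off by $4$ (the correct closed form is $5d+5-|I(h)|-\on{age}(h)-\on{age}(g)$), your own numbers actually yield $c(d,h)=m_d-\dim(\cY_h)-1$ rather than the $m_d-\dim(\cY_h)$ you assert, and you end by conceding that the ``natural outcome'' lacks the $+1$ and speculating about where it might come from. A proof of an identity whose whole point is a constant cannot leave that constant to be guessed; the boundary bookkeeping you deferred is precisely the proof.
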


\begin{proof}  
The standard formula for virtual dimension gives 
\[
 m_d = 5d + 3 - \on{age}(h) - \on{age}(g).
\]  
Note that for any presentation 
$g = (r_0(g), \ldots, r_4(g))$, $\on{age}(g) = \sum_{i=0}^4 r_i(g)/5$.  
Because $[h] = [g]^{-1}$, we have that 
\[
  r_i(g) - r_j(g) \equiv r_j(h) - r_i(h) \, (\on{mod} 5).
\]
This allows us to write 
\[
  \frac{r_k(g)}{5} = \left\{ 
  \begin{array}{cc} - r_k(h)/5 + d(h,g) & d(h,g) \geq r_k(h)/5 \\ 
   1 - r_k(h)/5+ d(h,g) & d(h,g)< r_k(h)/5
  \end{array} \right.,
\] 
which gives 
\[
 \begin{array}{ll}
  m_d &= 5d + 3 - 5d(h,g) - |\{k\, | d(h,g) < r_k(h)/5\}| \\ 
  & = 5\lfloor d \rfloor + |\{k\, | d(h,g) \geq r_k(h)/5\}| - 2.
 \end{array}
\]
Now, for a fixed $k$, 
\[
  |\{b\, | 0 \leq b \leq d, \,\,\, \langle b \rangle =  r_k(h)/5  \}| 
  = \left\{ \begin{array}{cc} \lfloor d \rfloor & d(h,g) < r_k(h)/5 \\ 
    1 + \lfloor d \rfloor & d(h,g) \geq r_k(h)/5
  \end{array} \right\}.
\]  
Summing over all $k$, we get that 
\[
  m_d = |\{(b,k)\, | 0 \leq b \leq d, \quad 0 \leq k \leq 4,\quad 
  \langle b \rangle =  r_k(h)/5 \}| - 2.
\]  
Finally, 
\[
  \dim(\cY_g) = |\{k\, |\, 0 =  r_k(h)/5\}| - 1,
\] 
which gives the desired equality. 
\end{proof}

\subsubsection{Setting up the localization}
The action of $T$ on $\sMbar_{0,h,g}(\cY, d))$ allows us to reduce integrals 
on the moduli space to sums of integrals on the fixed point loci with respect 
to the torus action. 
As usual, this reduces us to considering integrals of certain graph sums 
(see \cite{tGrP}).  
The generating function $Z^T_g$ consists of integrals where the first 
insertion is the pull back of a class on 
\[
  \coprod \limits_{\{h| [h] = [g]^{-1}\}} \cY_h.
\]  
We will now express $Z_g$ in terms of a new basis for this space which 
interacts nicely with the localization procedure.  
For each coordinate $0 \leq i \leq 4$, $i$ is in $I(h)$ for exactly one $h$ 
in $\{h| [h] = [g]^{-1}\}$. 
(Recall that the presentations $h \in \{h| [h] = [g]^{-1}\}$ 
index the fixed point sets of $\PP^4$ with respect to $[h]$).  
Then for $i \in I(h)$, let $q_i$
be the $T$-fixed point of $\cY_h$ obtained by setting all coordinates 
$\{j \,\vline\, j \neq i\}$ equal to zero.  
Then, for $i \in I(h)$, let 
\[ 
  \phi_i =  \ii_h\cdot \prod \limits_{j \in I(h) - i} H - \lambda_j.
\]  
If we pair $Z^T_g$ with $\phi_i$, we obtain the function 
\begin{equation*} 
  Z^T_{i,g} = \frac{\delta^{i, I(g)}}{125} + \sum_d Q^{c(d, h)} \bigg\langle
  \frac{\phi_i}{z - \psi_1}, \ii_g\bigg\rangle^{\cY, T}_{0,2,d},
\end{equation*} 
where $\delta^{i, I(g)}$ equals $1$ if $i \in I(g)$ and $0$ otherwise. 
The fixed point set of $\cY_h$ consists of $\{q_j | j \in I(h)\}$.  
Note that under the inclusion $i_j: \{ q_j \} \to \cY_h$, 
$H$ pulls back to $\lambda_j$.  
Therefore $i_j^* (\phi_i) = 0$ unless $i = j$.  
From this we see that the coefficients of $Z^T_{i,g}$ consist of integrals 
over graphs such that the first marked point is mapped to $q_i$.

We divide the remaining graphs into \emph{two types}:  
those in which the first marked point is on a contracted component, 
and those in which the first marked point is on a noncontracted component.  

\begin{claim}
There is no contribution from graphs of the first type.
\end{claim}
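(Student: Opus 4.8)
The plan is to apply the virtual localization theorem to each integral $\big\langle \frac{\phi_i}{z - \psi_1}, \ii_g\big\rangle^{\cY, T}_{0,2,d}$ occurring in $Z^T_{i,g}$ and to read the claim off from the combinatorics of the decorated fixed-point graphs. Recall that such a graph records a decomposition of a stable map $f\in\sMbar_{0,2}(\cY,d)$ into maximal contracted subcurves lying over the torus-fixed points $q_j$ of $\cY$ (the vertices) and components mapping with positive degree onto the torus-invariant lines $\overline{q_jq_k}\subset\cY$ (the edges), together with the two legs marking where $p_1$ and $p_2$ sit. Since $i_j^*\phi_i=0$ for $j\neq i$, only graphs whose first leg is attached over $q_i$ contribute, and a graph of the \emph{first type} is by definition one in which $p_1$ lies on a genuine contracted component $\cC_0$ over $q_i$, rather than directly on an edge. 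It suffices to show that no such graph occurs.

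I would proceed as follows. Let $\cC'\subset\cC$ be the connected contracted subcurve over $q_i$ that contains $p_1$; it is a genus-$0$ twisted curve carrying a representable map to the residual gerbe $B\bar{G}$ of $q_i$, and its special points are $p_1$, possibly $p_2$, and the nodes at which $\cC'$ meets the positive-degree edges. Stability forces $\cC'$ to carry at least three special points, so, since $\cC$ has only two marked points, $\cC'$ must meet at least one such node. Representability implies that the isotropy group at each special point of $\cC'$ injects into $\on{Aut}_{\cY}(q_i)=\bar{G}\cong(\ZZ/5\ZZ)^3$, hence has order $1$ or $5$; and the balancing condition, together with the computation of the generic isotropy of $[\overline{q_iq_k}/\bar{G}]$ in Section~\ref{s:2.1}, pins down the twisting at such a node in terms of the degree of the adjacent edge via Proposition~\ref{p:mapsallowed}(ii). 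Feeding these constraints back into $\cC'$, one sees that after accounting for the edge-nodes the orbifold marked-point data left on $\cC'$ reduces to a single orbifold marked point on a genus-$0$ curve, contradicting Lemma~\ref{l:2.2}(ii); the residual degenerate configurations — chiefly the one in which $p_2$ also lies on $\cC'$ and every edge-node of $\cC'$ is untwisted — are ruled out directly, since Proposition~\ref{p:mapsallowed}(i)--(ii) together with the balancing condition then force either an unbalanced node or a non-injective isotropy homomorphism. Hence first-type graphs do not exist, and their total contribution vanishes.

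The main obstacle is exactly this last case analysis: one has to organize the contracted configurations over $q_i$ according to whether $p_2$ lies on $\cC'$ and according to the twisting at each edge-node, and in each case combine Proposition~\ref{p:mapsallowed}, the balanced-node condition, and Lemma~\ref{l:2.2} to reach a contradiction — the delicate part being the bookkeeping of how the $(\ZZ/5\ZZ)^3$-structure propagates across the tree of components. A minor additional point, included for safety, is to observe that even a formally allowed configuration could not contribute: the insertion $\frac{\phi_i}{z-\psi_1}$ and the equivariant Euler class of the virtual normal bundle along $\cC_0$ do not conspire to produce a nonzero term. But the substance of the claim is the non-existence of first-type graphs, which is a purely geometric statement about representable orbifold maps established in Lemma~\ref{l:2.2}.
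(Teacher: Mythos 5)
Your argument rests on the assertion that graphs of the first type do not exist, and that assertion is false; this is the central gap. Such graphs exist in abundance: already in the untwisted sector ($h=g=e$, $d$ a positive integer) there are torus-fixed stable maps whose source contains a contracted genus-zero component over $q_i$ carrying $p_1$, $p_2$ and one or more nodes, each node attached to an edge of positive degree, and nothing in representability or balancing forbids this. Your appeal to Lemma~\ref{l:2.2}(ii) does not bite, because the contracted component $\cC'$ generically carries either no orbifold special points or two or more of them (for instance a twisted $p_1$ together with a node of monodromy $[h]^{-1}$, or two nodes with mutually inverse monodromies); the lemma excludes only the case of exactly one orbifold point. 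The configurations you dismiss as ``residual degenerate'' cases are in fact the generic ones, and your closing remark that the integrand ``does not conspire to produce a nonzero term'' is precisely the statement to be proved, asserted without argument.

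The paper's proof is instead a dimension count on fixed loci that genuinely exist. Since $\deg_{\CC}\phi_i=\dim(\cY_h)$, the virtual-dimension constraint together with Lemma~\ref{l:dime} shows that the only power of $\psi_1$ surviving in $\big\langle\frac{\phi_i}{z-\psi_1},\ii_g\big\rangle^{\cY,T}_{0,2,d}$ is $\psi_1^{c(d,h)-1}$. On a first-type graph, with $[M_\Gamma]=F(\Gamma)\cdot\prod_k[M_{v_k}]$, the class $\psi_1$ restricts to the cotangent class on the factor $M_{v_0}$ containing $p_1$, and $\dim M_{v_0}\le\lfloor d\rfloor-1$ because $M_{v_0}$ carries at most two orbifold special points while each non-orbifold node feeds an edge of degree at least $1$. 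Since $c(d,h)-1\ge 5\lfloor d\rfloor-1>\lfloor d\rfloor-1$ for $d\ge1$ (first-type graphs being excluded for $d<1$ by a separate, genuinely geometric argument), the class $\psi_1^{c(d,h)-1}$ vanishes on $M_{v_0}$ and the contribution is zero. So the substance of the claim is cohomological vanishing, not non-existence; a correct proof must engage with the exponent $c(d,h)-1$, which your proposal never uses.
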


\begin{proof}
The proof is a dimension count. We will show that the contributions
from graphs of the first type must contain as a multiplicative factor
integrals of the form $\int_{M} \Psi$ such that $\deg_\CC (\Psi) > \dim (M)$,
and hence the vanishing claim.

The complex degree of $\phi_i$ is $\dim(\cY_h)$, so the invariant 
$\langle\phi_i \psi_1^k, \ii_g\rangle^{\cY,T}_{0,2,d}$ vanishes unless 
$k \geq m_d - dim(\cY_h)$.  Thus we can simplify our expression for $Z^T_{i,g}$:
\begin{equation*} 
 \begin{split}
  Z^T_{i,g}&=\frac{\delta^{i, I(g)}}{125} + \sum_d Q^{c(d,h)}
   \bigg\langle\frac{\phi_i}{z - \psi_1}, \ii_g \bigg\rangle^{\cY,T}_{0,2,d} \\ 
  &= \frac{\delta^{i, I(g)}}{125} + \sum_d Q^{c(d,h)}\frac{1}{z} \sum_{k = 0}^\infty
   \big\langle \phi_i (\psi_1/z)^k, \ii_g\big\rangle^{\cY,T}_{0,2,d} \\ 
 &=\frac{\delta^{i, I(g)}}{125} +\sum_d Q^{c(d,h)}\frac{1}{z} \sum_{k = c(d,h)-1}^\infty
   \big\langle\phi_i (\psi_1/z)^k, \ii_g\big\rangle^{\cY,T}_{0,2,d} \\
  &=\frac{\delta^{i, I(g)}}{125} + \sum_d \Big(\frac{Q}{z} \Big)^{c(d,h)} 
   \bigg\langle
  \frac{\phi_i \psi_1^{c(d,h)-1}}{1 - (\psi_1/z)}, \ii_g\bigg\rangle^{\cY,T}_{0,2,d}.
 \end{split}
\end{equation*}
Here the third equality follows from Lemma \ref{l:dime}.  

Now consider a fixed point graph $M_\Gamma$ such that 
$p_1$ is on a contracted component.  
At the level of virtual classes, we can write 
\begin{equation} \label{e:2.3.1}
  \left[M_\Gamma\right] = F(\Gamma) \cdot \prod_k \left[M_{v_k}\right],
\end{equation}
where each $M_{v_k}$ represents a contracted component of the graph isomorphic 
to a component of $\Mbar_{0, n}(B\ZZ_r, 0)$, 
and $F(\Gamma)$ is a factor determined by $\Gamma$.  
Let $M_{v_0}$ be the component containing $p_1$.  $M_{v_0}$ contains at most 
$2$ orbifold marked points, and the number of non-orbifold marked points 
is restricted by $d$.  
In particular, each non-orbifold marked point corresponds to a 
(non-orbifold) edge of the dual graph.  
Each of these edges must have degree at least $1$, so if the total degree of 
the map is $d$, then there can be at most $\lfloor d \rfloor $ nontwisted 
marked points.  
Thus the dimension of $M_{v_0}$ is at most $\lfloor d\rfloor - 1$.  
Now, the proof of Lemma \ref{l:dime} shows that 
\[
 c(d, h) - 1 = 5\lfloor d \rfloor + |\{k\,|\, r_k(h)/5 \leq d(h,g)\}| 
 - 2 - \dim(\cY_h).
\]  
But $\dim(\cY_h)$ is exactly $|\{k\,|\, r_k(h) =0 \}| - 1$, 
which implies that $$c(d, h) - 1 \geq 5 \lfloor d \rfloor - 1.$$ 
If $d \geq 1$, the above quantity is strictly greater than 
$\lfloor d \rfloor - 1$.  
Because there do not exist graphs such that $p_1$ is on a non-contracted 
component for $d<1$, we have that for $M_\Gamma$, 
$c(d, h) - 1 \gneq \dim(M_{v_0})$.  
But $\psi_1^{c(d, I) - 1}$ must therefore vanish on these graphs, 
proving the claim.
\end{proof}

\subsubsection{Contributions from a graph of the second type}\label{s:2.3.3}

Now let us consider the contribution to 
$\langle\frac{\phi_i}{z - \psi_1}, \ii_g\rangle^{\cY,T}_{0,2,d}$ 
from a particular graph $\Gamma$ of the second type.
In particular, we know that $p_1$ is on a noncontracted component.  
Call this component $\cC_0$, and denote the rest of the graph $\Gamma '$.  
$\Gamma'$ and $\cC_0$ connect at a node $p'$, which maps to some $q_k \in \cY$.
Let $d'$ be the degree of one connected component of the principal 
$\bar{G}$-bundle above $\cC_0$.  
We know from Proposition~\ref{p:mapsallowed} that 
$\langle d' \rangle = r_k(h)/5$.  
By identifying $p' \in \Gamma '$ as a marked point 
(replacing $p_1$ on $\cC_0$), we can view $M_{\Gamma '}$ as a fixed point 
locus in $\sMbar_{0, h', g}(\cY, d - d')$, where 
$[h] = [h']$, but $r_k(h') = 0$.  
Our plan will be to express integrals on $M_\Gamma$ in terms of integrals on 
$M_{\Gamma '}$, thus reducing the calculation to one involving maps of 
strictly smaller degree.
This will give us a recursion.

The factor $F(\Gamma)$ in Equation~\ref{e:2.3.1}
is composed of three contributions:  the automorphisms of the graph $\Gamma$ 
itself, a contribution from each edge of $\Gamma$ 
(the non-contracted components of curves in $M_\Gamma$), 
and a contribution from certain flags of $\Gamma$ 
(the nodes of curves in $M_\Gamma$).  
The edge corresponding to $\cC_0$ maps to the line 
$q_{ik} \cong \PP^1/\bar{G}$ connecting $q_i$ and $q_k$.  
(Note that the $\bar{G}$-action is a subgroup of the big torus $(\CC^*)^4$ of 
$\PP^4$, $\bar{G}$ naturally acts on $(\CC^*)^4$ orbits.)
The degree of the map upstairs is $5 d'$.  
Thus there is a contribution of $1/(5 d')$ to $F(\Gamma)$ from the 
automorphism of $M_\Gamma$ coming from rotating the underlying curve.  
The edge also contributes a factor of $1/25$ due to the fact that $q_{ik}$ is 
a $(\ZZ/5\ZZ)^2$-gerbe.  
So the total contribution to $F(\Gamma)$ from the edge containing $p_1$ is 
$1/(125d')$.  
The contribution from the node $p'$ is $125/r$.  
(Recall $r = |[h]|$, which is equal to the order of the isotropy at $p'$).  
There will be an additional factor of $r$ appearing when we examine 
deformations of $M_\Gamma$, thus canceling the $r$ in the denominator.  
We finally arrive at the relation 
\[
  \left[M_\Gamma\right] = F(\Gamma) \cdot \prod_{\text{vertices }v \in \Gamma} 
  \left[M_{v}\right] \, = \, \frac{F(\Gamma ')}{d'} \cdot
  \prod_{\text{vertices }v \in \Gamma '} \left[M_{v}\right] 
  \, = \, \frac{1}{d'}\left[M_{\Gamma '}\right].
\]
By examining the localization exact sequence (see \cite{tGrP}), 
we have the following 
identity: 
\begin{equation}\label{e:norm}
  e(N_\Gamma) 
  = \frac{e(H^0(\cC_0, f^*T\cY)^m)(\text{node smoothing at $p'$})}
   {e(H^0(p', f^*T\cY)^m)e(H^1(\cC_0, f^*T\cY)^m) e((H^0(\cC_0, T\cC_0)^m)
   }e(N_{\Gamma '})
\end{equation} 
where $e$ denotes the equivariant Euler class, and as is standard 
we identify certain vector bundles with their fibers.  
Here the superscript $m$ denotes the moving part of the vector bundle with 
respect to the torus action.
Let us calculate the factors in \eqref{e:norm}.

$\bullet \, (\text{node smoothing at $p'$})$:
The node smoothing contributes a factor of 
\[
 \left( \frac{\lambda_k - \lambda_i}{rd'} - \frac{\psi_1 '}{r} \right) = 
 \frac{1}{r}\left( \frac{\lambda_k - \lambda_i}{d'} - \psi_1 ' \right),
\] 
where $\psi_1 '$ is the $\psi$-class corresponding to $p_1 '$ on $M_{\Gamma} '$. 
This factor of $r$ is what cancels with the previous factor mentioned above.

$\bullet \, e(H^0(\cC_0, T\cC_0)^m)$: 
Let $C$ be the principal $\bar{G}$-bundle over $\cC_0$ induced from
$f|_{\cC_0}: \cC_0 \to [\PP^4/\bar{G}]$. 
As was argued in Proposition~\ref{p:mapsallowed}, 
$C$ consists of $(|\bar{G}|/r)$ copies of $\PP^1$.  
Let $C_0$ be one of these copies.  
Then $C_0$ is a principal $\langle [h]\rangle$-bundle over $\cC_0$ and
\[
 H^0(\cC_0, T\cC_0) = H^0(C_0, TC_0 )^{\langle [h]\rangle}.
\]
The $\langle [h] \rangle$-invariant part of $H^0(C_0, TC_0 )$ is one 
dimensional.  
It is fixed by the torus action, thus the moving part of $H^0(\cC_0, T\cC_0)$ 
is trivial
and $e(H^0(\cC_0, T\cC_0)^m) = 1$.

$\bullet \, e(H^1(\cC_0, f^*T\cY)^m)$: Let $C_0$ be as in the previous bullet, 
then
\[
 H^1(\cC_0, f^*T\cY) = H^1(C_0, \tilde{f}^*T\PP^4 )^{\langle [h]\rangle} = 0.
\]
Therefore $e(H^1(\cC_0, f^*T\cY)^m)=1$.  

$\bullet \, e(H^0(\cC_0, f^*T\cY)^m)$:
To calculate this term, note that
\[
 H^0(\cC_0, f^*T\cY)^m \cong
 \left( H^0(C_0, \tilde{f}^*T\PP^4)^{\langle [h] \rangle}\right)^m.
\]
We will look at the $\langle [h] \rangle$ invariant 
part of the short exact sequence 
\[
 0 \to \CC \to H^0(\sO_{C_0}(r d' )) \otimes V \to H^0(\tilde{f}^*T\PP^4) 
  \to 0,
\]
where $\PP^4 = \PP(V)$ and $V \cong \CC^5$.
The exact sequence comes from the pullback of the Euler sequence for 
$\PP^4$ to $C_0$.
(Note that the degree of $\tilde{f}: C_0 \to \PP^4$ is $rd'$).   
The action of $[h]$ on the first term in the sequence is trivial.  

Recall that $\PP(V)$ has coordinates $[x_0, \ldots, x_4]$.  
Let $[s,t]$ be homogeneous coordinates on $C_0 \cong \PP^1$, 
such that the preimage of $p_1$ in $C_0$ is $[0,1]$ and 
the preimage of $p'$ in $C_0$ is $[1,0]$. 
Then the middle term of the sequence is spanned by elements of the form 
$s^at^b \frac{\partial}{\partial x_l}$ where 
$0 \leq l \leq 4$ and $a + b = rd'$.  
The action is given by 
\[
 [h] . (s^at^b \frac{\partial}{\partial x_l}) 
 = e^{2 \pi \sqrt{-1} (-a + r_l(h))/r}s^at^b \frac{\partial}{\partial x_l},
\]
and so this summand is invariant under the $\langle [h] \rangle$-action 
if and only if $r_l(h)/r = \langle a/r \rangle$.  
The $\CC^*$-action on this term has weight 
\[
\left(a/r d'\right)\lambda_k + \left(b/r d'\right)\lambda_i - \lambda_l,
\] 
so we finally arrive at 
\[
 \begin{split} &e(H^0(\cC_0, f^*T\cY)^m) \\
 = & \prod_{\substack{\{(a,l)| 0 \leq a \leq rd'\,\, 0 \leq l \leq 4\,\, r_l(h)/r = \langle a/r \rangle \}\\ \setminus \{(0,i),\,\,(r d',k)\}}}
 \left(\frac{a}{rd'}\lambda_k + \frac{rd'-a}{rd'}\lambda_i - \lambda_l\right) \\ 
 =&\prod_{\substack{\{(a,l)| 0 \leq a \leq rd'\,\, 0 \leq l \leq 4\,\, r_l(h)/r = \langle a/r \rangle \}\\ \setminus \{(0,i),\,\,(r d',k)\}}}
 \left(a\left(\frac{\lambda_k  -\lambda_i}{rd'}\right) 
 + \lambda_i - \lambda_l\right).
 \end{split}
\]

$\bullet \, e(H^0(p', f^*T\cY)^m )$:
Similarly, the node $p'$ is isomorphic to $B\ZZ_r$, and each of the 
$|\bar{G}|/r$ points lying in the principal $\bar{G}$-bundle over $p'$ 
is a principal $\langle [h] \rangle$-bundle over $p'$.  
Thus $H^0(p', f^*T\cY)^m \cong \left( (T_{q_k}\PP^n)^{\langle [h] \rangle}\right)^m$ 
and 
\[
 e(H^0(p', f^*T\cY)^m ) = \prod_{l \in I(h') \setminus\{ k \}}
  \left(\lambda_k - \lambda_l\right).
\]

Finally note that $ev_1^*(\phi_i) = \prod_{l \in I(h) - i} (\lambda_i - \lambda_l)$.
We can do one further simplification.  
On the graphs which we consider, namely those where $p_1$ is on a noncontracted 
component, $\psi_1$ restricts to $\frac{\lambda_k - \lambda_i}{d'}$.  
(In fact $e(T^*_{p_1} \cC) \cong \frac{\lambda_k - \lambda_i}{rd'}$, but because
we are following the convention that $\psi$-classes are pulled back from the 
reification, we must multiply this by a factor of $r$).

These calculations plus \eqref{e:norm} then give us the contribution to 
$\big\langle\frac{\phi_i \psi_1^{c(d,h) - 1}}{1 - \psi_1/z}, \ii_{g} 
\big\rangle^{\cY,T}_{0,2,d}$ from the graph $M_\Gamma$: 
\[
\begin{split}
 &\int_{[M_\Gamma]} \frac{ev_1^*(\phi_i)\psi_1^{c(d,I) - 1}}
  {e(N_\Gamma)\left(1 - \psi_1/z \right)} \\
 = &\frac{\frac{\lambda_k - \lambda_i}{d'}^{c(d,I) - 1} 
    \prod_{l \in I(h) \setminus \{ i\}} (\lambda_i - \lambda_l)e(H^1(\cC_0, f^*T\cY)^m)}
 {e(H^0(\cC_0, f^*T\cY)^m) (1 - \frac{\lambda_k - \lambda_i}{d'z})} \\ 
 &\cdot\frac{1}{d '}\int_{[M_\Gamma ']} \frac{e(H^0(p', f^*T\cY)^m)}
   {(\text{node smoothing at $p'$})e(N_{\Gamma'})} \\ 
 =&\frac{\frac{\lambda_k - \lambda_i}{d'}^{c(d,h) - 1}\prod_{l \in I(h)\setminus \{ i\}} 
   (\lambda_i - \lambda_l)}{(d' - \frac{\lambda_k - \lambda_i}{z}) 
  \prod \limits_{\substack{\{(a,l)| 0 \leq a \leq rd'\,\, 0 \leq l \leq 4\,\, r_l(h)/r = \langle a/r \rangle \}\\ \setminus \{(0,i),\,\,(r d',k)\}}}
  \left(a\left(\frac{\lambda_k  -\lambda_i}{rd'}\right) + \lambda_i - \lambda_l\right)} \\  
 &\cdot \int_{[M_{\Gamma'}]} \frac{\prod_{l \in I(h')\setminus \{ k \}}
   \left(\lambda_k - \lambda_l\right)}{(\frac{\lambda_k - \lambda_i}{d'} - \psi_1)
   e(N_{\Gamma'})}.
\end{split}
\]

\subsubsection{Recursion relations} \label{s:2.3.4}
We will formulate the above computations into a recursion relation.
To do that, the following regularity lemma is needed.

\begin{lemma}[Regularity Lemma]
$Z^T_{i,g}$ is an element of $\QQ(\lambda_i,z)[[Q]]$.
The coefficient of each $Q^D$ is a rational function of $\lambda_i$ and $z$
which is regular at $z = (\lambda_i -\lambda_j)/k$ for all $j\neq i$ and 
$k \geq 1$.
\end{lemma}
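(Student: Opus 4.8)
The plan is to deduce the Regularity Lemma directly from the localization computation carried out above. First I would record that, after the vanishing of $ev_1^*(\phi_i)$ at every $T$-fixed point of $\coprod_{\{h\,|\,[h]=[g]^{-1}\}}\cY_h$ except $q_i$, together with the Claim that graphs with $p_1$ on a contracted component contribute nothing, one has
\[
 Z^T_{i,g} = \frac{\delta^{i,I(g)}}{125} + \sum_{d}\Big(\frac{Q}{z}\Big)^{c(d,h)}\sum_{\Gamma}\int_{[M_\Gamma]}\frac{ev_1^*(\phi_i)\,\psi_1^{c(d,h)-1}}{e(N_\Gamma)\,(1-\psi_1/z)},
\]
the inner sum running over the finitely many fixed-point graphs $\Gamma$ of the second type in degree $d$. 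For each such $\Gamma$, the computation of Section~\ref{s:2.3.3} shows that $\psi_1$ restricts to the single constant $(\lambda_k-\lambda_i)/d'\in H^2_T(\mathrm{pt})$, where $q_k$ is the image of the node joining the $p_1$-component $\cC_0$ to the remainder $\Gamma'$ of the graph and $d'$ is a positive integer, the degree of one component of the $\bar{G}$-bundle over $\cC_0$; in particular $k\neq i$, and the series $\sum_n(\psi_1/z)^n$ resums on $M_\Gamma$ to the rational function $\big(1-(\lambda_k-\lambda_i)/(d'z)\big)^{-1}$. Hence every graph contributes a term whose entire $z$-dependence is the factor $z/\big(d'z-(\lambda_k-\lambda_i)\big)$, all remaining factors --- namely $ev_1^*(\phi_i)=\prod_{l\in I(h)\setminus\{i\}}(\lambda_i-\lambda_l)$, the Euler-class products of Section~\ref{s:2.3.3}, and the residual integral $\int_{[M_{\Gamma'}]}(\cdots)$, which is a well-defined rational function of the equivariant parameters since $M_{\Gamma'}$ is proper and the localized integrand has nonvanishing denominator --- being independent of $z$. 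A finite sum of such terms times $(Q/z)^{c(d,h)}$ is a rational function of $z$ and of the equivariant parameters, which gives the first assertion $Z^T_{i,g}\in\QQ(\lambda_i,z)[[Q]]$; the one spot requiring attention here is the cancellation of $\prod_{l\in I(h)\setminus\{i\}}(\lambda_i-\lambda_l)$ against the $a=0$ factors of $e(H^0(\cC_0,f^*T\cY)^m)$ (in the notation of Section~\ref{s:2.3.3}), which is what keeps track of exactly which $\lambda$'s survive.

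For the regularity claim I would then observe that the coefficient of $Q^D$ in $Z^T_{i,g}$ is a finite sum of terms of the form $(\mathrm{const})\cdot z^{1-c(d,h)}/\big(d'z-(\lambda_k-\lambda_i)\big)$ with $d'\geq 1$ and $k\neq i$ (plus the constant $\delta^{i,I(g)}/125$ when $D=0$). Its only poles in $z$ are therefore $z=0$ and $z=(\lambda_k-\lambda_i)/d'=(\lambda_i-\lambda_k)/(-d')$ with $-d'\leq -1$. Since the forbidden locus consists of the points $(\lambda_i-\lambda_j)/k$ with $k\geq 1$ and the equivariant weights are chosen generic, none of these poles meets it, which is exactly the stated regularity at $z=(\lambda_i-\lambda_j)/k$, $j\neq i$, $k\geq 1$.

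The genuinely substantive input is the degeneration analysis already in place in Sections~\ref{s:2.3} and \ref{s:2.3.3}: that graphs with $p_1$ on a contracted component drop out (a dimension count through Lemma~\ref{l:dime}), and that on every surviving graph $\psi_1$ restricts to the single constant $(\lambda_k-\lambda_i)/d'$ with $d'$ a positive integer. Granting that, the Regularity Lemma is a bookkeeping corollary. I expect the step most deserving of care to be confirming that resumming the expansion of $1/(z-\psi_1)$ on each fixed-point graph yields precisely this simple pole and no others --- equivalently, that $\psi_1$ truly restricts to a constant rather than a constant-plus-nilpotent class on $M_\Gamma$ --- since this is what pins the poles of $Z^T_{i,g}$ to $z=0$ and $z=(\lambda_i-\lambda_k)/(-d')$ and thereby keeps them off the forbidden locus.
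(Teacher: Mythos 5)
Your argument is correct and is precisely the ``standard localization argument'' that the paper invokes: the paper's own proof is a one-line citation to Lemma~11.2.8 of \cite{CK}, and what you have written is that argument carried out explicitly using the graph-sum analysis of Section~\ref{s:2.3.3} (contracted-component graphs vanish by the dimension count, and on the surviving graphs $\psi_1$ restricts to the genuine constant $(\lambda_k-\lambda_i)/d'$, so each graph contributes a single simple pole in $z$ besides $z=0$). In particular your observation that these poles sit at $z=(\lambda_k-\lambda_i)/d'$ with $d'\geq 1$, hence off the locus $z=(\lambda_i-\lambda_j)/k$, $k\geq 1$, for generic weights, is exactly the content of the cited lemma.
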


\begin{proof}  This follows from 
a standard localization argument, see e.g.\ Lemma 11.2.8
in \cite{CK}.
\end{proof}

Using the Regularity Lemma, the above computation simplifies to 
\[\begin{split}
&\left( \bigg\langle\frac{\phi_i \psi_1^{c(d, h) - 1}}{1 - \psi_1/z}, \ii_{g}\bigg\rangle^{\cY,T}_{0,2,d} \right)_{M_{\Gamma}} \\
&= C^{i,k}_{d'} \cdot \left(\frac{\lambda_k - \lambda_i}{d'}\right)^{c(d,h) - 1 - (c(d', h) - 1)} \cdot \left( \bigg\langle\frac{\phi_k}{z - \psi_1}, \ii_{g}\bigg\rangle^{\cY,T}_{0,2,d - d'} \right)_{M_{\Gamma'}} \bigg|_{z \mapsto \frac{\lambda_k - \lambda_i}{d'}},
\end{split}
\] 
where 
\[
 C^{i,k}_{d'} = \frac{1}{(d' - \frac{\lambda_k - \lambda_i}{z})
   \prod \limits_{\{(a,l)\in S(d', h) \setminus \{ (d',k) \}\}}
\left(a + d'\left(\frac{\lambda_i - \lambda_l}{\lambda_k  -\lambda_i}\right)\right)}
\] 
and 
$\left(-
\right)_{M_{\Gamma}}$
means the contribution of the fixed component $M_{{\Gamma}}$ 
to the expression in parentheses.

Due to the fact that $r_k(h)/5 = \langle d' \rangle$, one can check that 
\[
 c(d, h) - c(d', h) = c(d - d', h')
\] 
(see \eqref{e:recur}). 
We arrive at the expression 
\[
  C^{i,k}_{d'} \cdot \left( Q^{c(d-d', k)}   \bigg\langle\frac{\phi_k}{z - \psi_1},
  \ii_{g}\bigg\rangle^{\cY,T}_{0,2,d - d'} \right)_{M_{\Gamma'}} 
  \bigg|_{z \mapsto \frac{\lambda_k - \lambda_i}{d'}, Q \mapsto \frac{\lambda_k - \lambda_i}{d'} }.
\]
After summing over all possible graphs, we obtain the recursion: 
\begin{equation}\label{e:recursion}
 Z^T_{i,g} = \frac{\delta^{i, I(g)}}{125} + 
  \sum_{\{(d',k) | \frac{r_k(h)}{5} = \langle d'\rangle , k \neq i, d' \neq 0 \}} 
  \left(\frac{Q}{z}\right)^{c(d', h)}C^{i,k}_{d'}\cdot Z^T_{k, g}
   \bigg|_{z \mapsto \frac{\lambda_k - \lambda_i}{d'}, Q \mapsto \frac{Q}{z}\frac{\lambda_k - \lambda_i}{d'} }.
\end{equation}
Although we have suppressed this in the notation, 
recall that in the above summand, $h$ is the presentation such that 
$\phi_i \in \cY_h$ ($i \in I(h)$).

\vspace{10pt}

We will now turn our attention to $Y^T_g$.
Let us define the function $Y^T_{i,g}$ analogously to that of 
$Z^T_{i,g}$ , 
\[
  Y^T_{i,g} := ( \phi_i, Y^T_g )^\cY_{CR}.
\]  
For $i \in I(h)$, 
\[
 Y^T_{i,g} = \frac{1}{125}\left(\delta^{i, I(g)} 
 + \sum_{\langle d \rangle = d(h,g)} Q^{c(d,h)} 
 \frac{1}{\prod \limits_{(b,k) \in S(d, h)} (bz + \lambda_i - \lambda_k)}\right).
\]
\begin{claim}
$Y^T_{i,g}$ satisfy the same recursion as $Z^T_{i,g}$ in \eqref{e:recursion}.
\end{claim}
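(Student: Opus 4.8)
The plan is to show that the explicit hypergeometric function $Y^T_{i,g}$ satisfies the recursion \eqref{e:recursion} by a direct manipulation of its closed form, mirroring the way the recursion for $Z^T_{i,g}$ was extracted from localization. First I would fix a presentation $h$ with $i \in I(h)$ and write out the coefficient of $Q^D$ in $Y^T_{i,g}$, namely
\[
 \frac{1}{125}\cdot\frac{1}{\prod_{(b,k)\in S(d,h)}(bz+\lambda_i-\lambda_k)}
\]
for the unique $d$ with $c(d,h)=D$ and $\langle d\rangle = d(h,g)$. The key observation is that the product in the denominator has exactly one factor that becomes singular (as a function of $z$) at $z=(\lambda_i-\lambda_j)/b'$ for a pair $(b',j)\in S(d,h)$ with $j\neq i$; all the other pairs $(b,k)$ with $k\neq i$ give factors that are regular there, and the pairs with $k=i$ contribute the ``$e^{\bt/z}$-type'' factor $\prod b z$. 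I would isolate, for each pair $(b',j)\in S(d,h)$ with $j \neq i$, the leading term of $Y^T_{i,g}$ near $z=(\lambda_j-\lambda_i)/b'$ — equivalently, expand $Y^T_{i,g}$ in the manner dictated by the Regularity Lemma and collect the contribution attached to that ``pole data''.

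Next I would match this contribution, term by term, against the right-hand side of \eqref{e:recursion}. The substitution $z\mapsto (\lambda_k-\lambda_i)/d'$, $Q\mapsto \tfrac{Q}{z}\cdot\tfrac{\lambda_k-\lambda_i}{d'}$ applied to $Y^T_{k,g}$ should, after multiplication by $(Q/z)^{c(d',h)} C^{i,k}_{d'}$, reproduce exactly the summand of $Y^T_{i,g}$ indexed by the degree $d=d'+ (d-d')$ where $d-d'$ is the degree appearing in $Y^T_{k,g}$ relative to the presentation $h'$ (the one with $r_k(h')=0$, $[h']=[h]$). The bookkeeping here uses three facts already available: the decomposition $S(d,h)=S(d',h)\sqcup\big(\text{shift of }S(d-d',h')\big)$ up to the single distinguished factor, the exponent identity $c(d,h)-c(d',h)=c(d-d',h')$ quoted just before the claim, and the elementary rewriting of $C^{i,k}_{d'}$ so that the factors $a+d'(\lambda_i-\lambda_l)/(\lambda_k-\lambda_i)$ are precisely what is needed to clear the part of the denominator of $Y^T_{i,g}$ coming from $S(d',h)\setminus\{(d',k)\}$. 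I would also need to check the initial term: the $Q^0$ coefficient of $Y^T_{i,g}$ is $\delta^{i,I(g)}/125$, matching the constant term in \eqref{e:recursion}.

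The main obstacle I anticipate is purely combinatorial-bookkeeping: getting every normalization constant to line up — the factor of $125 = |\bar G|$, the $1/d'$ from the automorphism of the rotated $\PP^1$, the gerbe factor $1/25$, and the factors of $r=|[h]|$ that cancel between the node-smoothing term and the deformation-of-$M_\Gamma$ term. In particular one must verify that, under the change of variables $z\mapsto (\lambda_k-\lambda_i)/d'$, the factor $bz+\lambda_i-\lambda_k$ for the distinguished pair $(b,k)=(d',k)$ degenerates correctly and is accounted for by the $(d'-(\lambda_k-\lambda_i)/z)$ appearing in $C^{i,k}_{d'}$, while the residual numerator $\prod_{l\in I(h)\setminus\{i\}}(\lambda_i-\lambda_l)$ is absorbed into the change of basis between $\{\phi_i\}$ and $\{\ii_h\tilde H^j\}$. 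I would organize the argument by first stripping off the $\prod bz$ (the $e^{\bt/z}$ factor) on both sides so that I am comparing ``pure hypergeometric tails'', then performing the substitution and reading off the equality of rational functions in $\lambda_i,\ldots,\lambda_4,z$ coefficient-wise in $Q$. Once $Y^T_{i,g}$ and $Z^T_{i,g}$ are seen to share the same $Q^0$ term and the same recursion, an induction on the $Q$-degree finishes the proof, and hence Theorem~\ref{t:Zformula}.
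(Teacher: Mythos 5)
Your proposal follows essentially the same route as the paper: the paper verifies the recursion for $Y^T_{i,g}$ by a partial-fraction decomposition of the coefficient of $Q^{c(d,h)}$ with respect to its poles in $z$, splitting the product over $S(d,h)\setminus\{(b,k)\}$ into the factors with $m\le b$ (which assemble into $C^{i,k}_{b}$) and those with $m>b$ (which, via the bijection \eqref{e:recur} and the identity $c(d,h)-c(b,h)=c(d-b,h')$, become $(Y^T_{k,g})^{c(d-b,h')}$ under the stated substitution), together with the check of the $Q^0$ term. One small clarification: the normalization constants you worry about (the $1/d'$ from the automorphism, the gerbe factor $1/25$, the cancelling factors of $r$) belong to the localization derivation of the recursion for $Z^T_{i,g}$ and are already encoded in $C^{i,k}_{d'}$, so they play no role in verifying that the explicit hypergeometric $Y^T_{i,g}$ satisfies \eqref{e:recursion}.
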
 
\begin{proof}
Consider the summand of $Y^T_{i,g}$ of degree $c(d,h)$ in $Q$, which we will denote 
$(Y^T_{i,g})^{c(d,h)}$.
\[
\begin{split}
  (Y^T_{i,g})^{c(d,h)}
 = &\frac{1}{125} \left(\frac{Q}{z} \right)^{c(d,h)}
 \frac{1}{\prod_{(b,k) \in S(d,h)} 
   \left(b + (\lambda_i - \lambda_k)/z\right)} \\
 =&\frac{1}{125} \left(\frac{Q}{z} \right)^{c(d,h)}
  \sum_{\{(b,k) | r_k(h)/5 = \langle b \rangle , k \neq i, b\neq 0 \}}
 \frac{1}{\left(b + (\lambda_i - \lambda_k)/z\right)} \\
 &\qquad \cdot \frac{1}{\prod \limits_{(m,l) \in S(d,h) \setminus \{ (b,k) \} }
   \left( b (\lambda_i - \lambda_l)/(\lambda_k - \lambda_i) + m\right)}\\ 
 =&\frac{1}{125} \left(\frac{Q}{z} \right)^{c(d,h)} 
  \sum_{\{(b,k) | r_k(h)/5 = \langle b \rangle , k \neq i, b \neq 0 \}}  \\
 &\left( \frac{ 1/\left(b + (\lambda_i - \lambda_k)/z\right)}
       {\prod \limits_{\{(m,l) \in S(d,h) \setminus \{(b,k)\}| m \leq b\}}
   \left( b (\lambda_i - \lambda_l)/(\lambda_k - \lambda_i) + m\right)} \right. \\ 
 &\qquad \cdot  \left.
       \frac{1}{\prod \limits_{\{(m,l) \in S(d,h) \setminus \{(b,k)\}| m > b\}}
  \left( b (\lambda_i - \lambda_l)/(\lambda_k - \lambda_i) + m\right)} \right).
\end{split}
\]

The last product from above can be rewritten as 
\[ 
  \prod_{(n,l) \in S(d - b, h')} \left(n +  
  b\frac{\lambda_k - \lambda_l}{\lambda_k - \lambda_i}\right), 
\] 
where $h'$ is chosen such that $[h] = [h']$ and $k \in I(h')$.  
To see this note that if $(b, k)$ and $(m,l)$ are both in $S(d, h)$, 
then by definition $r_k(h)/5 = \langle b \rangle$ and 
$r_l(h)/5 = \langle m \rangle$.  If $k \in I(h')$, then 
\[
 \begin{split}
  &\frac{r_l(h')}{5} = \frac{r_l(h')}{5} - \frac{r_k(h')}{5} \\
  \equiv &\frac{r_l(h)}{5} - \frac{r_k(h)}{5} \equiv \langle m \rangle - 
  \langle b \rangle \equiv \langle m - b \rangle \, (\on{mod} 1).
 \end{split}
\]  
In other words $r_l(h')/5 = \langle m - b \rangle$.  This proves that if $(b, k)\in S(d, h)$, and $h'$ is chosen as above, then for pairs $(m, l)$ with $b < m \leq d$, 
\begin{equation}\label{e:recur}
(m,l) \in S(d, h)\text{ if and only if }(m - b, l) \in S(d - b, h').
\end{equation}
We arrive at the relation
\begin{equation*}
\begin{split}
 &\left(Y^T_{i,g}\right)^{c(d,h)} \\
 \quad =  &\sum_{\{(b,k) | r_k(h)/5 = \langle b\rangle , k \neq i, b \neq 0 \}} 
 \left(\frac{Q}{z}\right)^{c(b, h)}C^{i,k}_{b}\left(Y^T_{k, g}\right)^{c(d-b, h')}
\bigg|_{z \mapsto \frac{\lambda_k - \lambda_i}{b}, Q \mapsto \frac{Q}{z}\frac{\lambda_k - \lambda_i}{b}}.
\end{split}
\end{equation*}
We conclude that $Y^T_{i,g}$ satisfy the same recursion as $Z^T_{i,g}$.
\end{proof}

The recursion relation and initial conditions imply $Y^T_{i,g} = Z^T_{i,g}$.
The proof of Theorem~\ref{t:Zformula} is now complete.
\begin{remark}
As a corollary one may easily obtain an 
explicit formula for the small $J$-matrix $J^\cY_{small}(t, z)$ by isolating
coefficients of the various $Z^\cY_g$.  We give an explicit expression
for certain specified rows of $J^\cY_{small}(t, z)$ in 
Corollary~\ref{c:ambientJformula}.
\end{remark}

\section{$A$ model of the mirror quintic $\cW$} \label{s:3}

\subsection{Fermat quintic and its mirror} \label{s:3.1}

Let $M \subset \PP^4$ be the Fermat quintic defined by the equation
$Q_0(x) = x_0^5 + x_1^5 + x_3^5 + x_4^5 + x_5^5$
\[
  M := \{ Q_0(x) = 0 \} \subset \PP^4.
\]
The Greene--Plesser's \emph{mirror construction} \cite{bGrP} gives the 
\emph{mirror orbifold} as the quotient stack
\[
  \cW := [M / \bar{G}].
\]
Note that the $\bar{G}$-action on $\PP^4$ \eqref{e:G}
preserves the quintic equation $Q_0(x)$ and therefore induces an action on $M$.
Equivalently, 
\begin{equation} \label{e:3.1.1}
 \cW = \{ Q_0 =0 \} \subset \cY = [ \PP^4/\bar{G}].
\end{equation}

\begin{remark} \label{r:3.1}
Since in this section we will only be interested in the Gromov--Witten theory 
($A$ model), which is deformation invariant,
we will only speak of the mirror orbifold instead of the mirror family.
\end{remark}

Recall in Lemma~\ref{l:2.1} the inertia orbifold of $\cY = [\PP^4/\bar{G}]$ 
is indexed by $g \in G$.
For a particular $g$, the dimension of $\cY_g$ is equal to 
$\big|\{j | r_j = 0\}\big| - 1$, and can be identified with a linear subspace of $\cY$.
The age shift of $\cY_g$ is $\on{age}(g) = \sum_{i = 0}^4 r_i/5$.  

The inertia orbifold of the mirror quintic $\cW$ can be described by 
that of $\cY$.
$\cW$ intersects nontrivially with $\cY_g$ exactly when 
$\big|\{j | r_j = 0 \}\big| \geq 2$. (that is, $\dim \cY_g \geq 1$.) 
Let 
\[ 
 \bar{S} := \left\{g= (r_0, \ldots, r_4) \in G \big|\, 2 \leq \big|\{j | r_j = 0 \}\big| \, \right\}.  
\]
(Note that $\bar{S}$ contains $e=(0, \ldots, 0)$.) Then 
\[
  I\cW = \coprod_{g \in \bar{S}} \cW_g \,, \qquad 
   \cW_g := \cW \cap \cY_g.  
\] 
All nontrivial intersections are transverse, so 
\[
 \dim(\cW_g) = \dim(\cY_g) - 1 = \big|\{j | r_j = 0\}\big| - 2.
\]
It follows that the age shift of $\cW_g$ is equal to the age shift of $\cY_g$.  
The cohomology of $\cW$ is given by 
\[ 
 H^*_{CR}(\cW) = \bigoplus_{g \in \bar{S}} H^{* - 2\on{age}(g)}(\cW_g).
\]

In the sequel, we will only be interested in the subring of $H^*_{CR}(\cW)$
consisting of classes of even (real) degree.  We will denote this 
ring as $H^{even}_{CR}(\cW)$.  It can be checked via a direct
calculation that if
 $i: \cW \hookrightarrow \cY$ is the inclusion, 
\[
 H_{CR}^{even}(\cW) = i^* H^*_{CR} (\cY).
\]

\begin{conventions} \label{conv:2}
Let $H$ be the hyperplane class on $\PP^4$.
By an abuse of notation, we will denote $H$ any fixed choice of $L$ on $\cY$ 
such that $\pi_{\PP^4}^*(L) = H$, 
where $\pi_{\PP^4}$ was defined in \eqref{e:cover}.
We will also denote $H$ the induced class on $\cW$.
Even though there are as many as $|\bar{G}|$ choices of $L$,
they are topologically equivalent and will serve the same purpose in
our discussion.
\end{conventions}

A convenient basis $\{T_i\}$ for $H_{CR}^{even}(\cW)$ is 
\begin{equation} \label{e:Wbasis}
  \bigcup_{g \in \bar{S}}   \{\ii_g, \ii_g {H}, \ldots , 
  \ii_g {H}^{\dim(\cW_g)}\}.
\end{equation}

We also note that $H_{CR}^{even}(\cW) \subset H^*_{CR}(\cW)$ is a self-dual subring
with respect to the Poincar\'e pairing of $H^*_{CR}(\cW)$.
Furthermore, this basis is self-dual (up to a constant factor).
Given $g = (r_0, \ldots, r_4) \in S$, let 
\[
 g^{-1}:= (-r_1, \ldots, -r_4) \, (\on{mod} 5).  
\]
Then the Poincar\'e dual elements can be easily calculated:
\[
 \left( \ii_g {H}^k \right)^{\vee} 
 = 25 \left( \ii_{g^{-1}} {H}^{\dim(\cW_g) - k} \right).
\]

\subsection{$J$-functions of $\cW$} \label{s:3.2}

\begin{conventions} \label{conv:3}
By the matrix $J$-function of $\cW$, we will mean the matrix consisting of
the collection of $H^{even}_{CR}(\cW)$-valued functions
with variable $\bt = t H$.
\begin{equation}\label{e:sJW} 
 J_g^{\cW}(t,z) := e^{t{H}/z}\left(\ii_g + \sum_{d , i} q^d e^{dt} 
 \bigg\langle \frac{T_i}{z - \psi_1}, \ii_g \bigg\rangle^{\cW}_{0,2,d} 
 T^i\right),
\end{equation}
where the basis $\{T_i\}$ is for $H^{even}_{CR}(\cW)$, as in \eqref{e:Wbasis}.
Here as in Section~\ref{s:2}, by the degree $d$ of a map $f: \cC \to \cW$ we mean 
\[ 
  d:= \int_{\cC} f^*({H}).
\] 
Note that if we extend the basis $\{T_i\}$ to full basis of $H^*_{CR}(\cC)$,
the classes of odd (real) degree will not contribute to 
$J^{\cW}_g(t, z)$, and thus \eqref{e:sJW} is equal to the $J_g$-function of
\eqref{e:sJ}.
\end{conventions}

As has been shown in Proposition~\ref{p:mapsallowed}, for an orbi-curve 
$\cC$ with two marked points, the degree must be a multiple of $1/5$.
Recall also from Proposition~\ref{p:mapsallowed} that the only nonzero 
contribution to the terms in $J_g^{\cW}$ comes from elements $T_i$ supported 
on some $\cW_h$ such that $[h] = [g^{-1}]$.  
From the definition of $\bar{S}$, it is required that
\begin{equation} \label{e:3.1.2}
  \big|\{j | r_j = 0 \}\big| \geq 2, \quad 
  \sum r_j \equiv 0 \, (\on{mod} 5).
\end{equation}
We will enumerate all possible cases.

It follows from the conditions \eqref{e:3.1.2} that
$\big|\{j | r_j = 0 \}\big|$ must be equal to $2$, $3$ or $5$.
That is, $\dim(\cW_g)$ is equal to $0$, $1$ or $3$.

If $\dim(\cW_g) = 3$, $g = e = (0,0,0,0,0)$ and $\ii_e = 1$. 
The only basis elements which contribute to $J_e^{\cW}$ come from the 
nontwisted sector.  We have 
\begin{equation} \label{e:dim3}
 J_e^{\cW}(t,z) = e^{t{H}/z}\left(1 + \sum_{d > 0} q^d e^{dt} \bigg\langle 
 \frac{{H}^i}{z - \psi_1}, 1 \bigg\rangle^{\cW}_{0,2,d} 
 (25{H}^{3-i})\right). 
\end{equation} 

If $\dim(\cW_g) = 1$, then up to a permutation of the entries, 
$g = (0,0,0, r_1, r_2)$ with $r_1 \neq r_2$. 
By definition of $\bar{S}$, other than $g$ there is no 
$h \in \bar{S}$ such that $[h] = [g]$. 
Therefore, the two basis elements which contribute nontrivially to 
$J_g^{\cW}$ are $\ii_{g^{-1}}$ and $\ii_{g^{-1}} {H}$.  We arrive at 
\begin{equation} \label{e:dim1}
 \begin{split}
  &J_g^{\cW}(t,z) = e^{t {H}/z} \Bigg( \ii_g +  \\
  &\sum_{d > 0} q^d e^{dt} \left(\bigg\langle \frac{\ii_{g^{-1}}}{z - \psi_1}, \ii_g 
  \bigg\rangle^{\cW}_{0,2,d} (25\ii_g{H}) + \bigg\langle 
  \frac{\ii_{g^{-1}}{H}}{z - \psi_1}, \ii_g \bigg\rangle^{\cW}_{0,2,d} 
  (25\ii_{g})\right) \Bigg).
 \end{split}
\end{equation}  

If $\dim(\cW_g) = 0$, then up to a permutation of the entries, 
$g = (0,0,r_1, r_1, r_2)$, with $r_1 \neq r_2$.
There is only one 
other $g_1 \in \bar{S}$ such that $[g_1] =[g]$, 
namely, $g_1 = (-r_1, -r_1, 0,0, r_2 - r_1) \, (\on{mod} 5)$.  
The two basis elements which contribute nontrivially to the invariants of 
$J_g^{\cW}$ are $\ii_{g^{-1}}$ and $\ii_{(g_1)^{-1}}$.  
Thus we can express $J_g^{\cW}(t,z)$ as 
\begin{equation} \label{e:dim0} 
 \begin{split}
  &J_g^{\cW}(t,z) = e^{t {H}/z} \Bigg( \ii_g +  \\
  &\sum_{d > 0} q^d e^{dt} 
  \left(\bigg\langle \frac{\ii_{g^{-1}}}{z - \psi_1}, \ii_g 
  \bigg\rangle^{\cW}_{0,2,d} (25\ii_g) + \bigg\langle 
  \frac{\ii_{(g_1)^{-1}}}{z - \psi_1}, \ii_g \bigg\rangle^{\cW}_{0,2,d} 
  (25\ii_{g_1})\right)\Bigg).
 \end{split}
\end{equation}
Thus for each twisted component $\cW_g$, the $J$-function $J_g^{\cW}$ 
has two components.  

\vspace{10pt}

We will relate the functions $J_g^{\cW}$ to certain hypergeometric functions, 
called $I$-functions.
To start with, let us introduce ``bundled-twisted'' Gromov--Witten 
invariants. Let $E \to \cX$ be a line bundle over the orbifold $\cX$.
We have the following diagram
\[
 \begin{CD}
      @.    E\\
     @.     @VVV \\
  \cC @>f>> \cX  \\
  @VV{\pi}V @. \\
  \sMbar_{0,n}(\cX, d). 
 \end{CD}
\]

The $E$-twisted Gromov--Witten invariants are defined to be
\[
 \big\langle \alpha_1 \psi^{k_1}, \ldots , \alpha_n \psi^{k_n}
 \big\rangle_{0,n,d}^{\cX, \on{tw}}  =  \int_{[\sMbar_{0,n}(\cX, d)]^{vir}} 
 \prod_{i=1}^n ev_i^*(\alpha_i) \psi_i^{k_i} \cup e(E_{0,n,d}),
\]
where
\[
 E_{o,n,d} := \pi_* f^*(E)
\]
and $e(E_{0,n,d})$ is the Euler class of the $K$-class.
We can define a twisted pairing on $H^*_{CR}(\cX; \Lambda)$ by 
\[
  ( \alpha_1, \alpha_2 )_{CR}^{\cX, \on{tw}} = 
   \int_\cX \alpha_1 \cup I^*(\alpha_2) \cup e(E).
\]  
With this, we can define a twisted $J$-function 
\[
 J^{\cX, \on{tw}}(\bt,z) =  1 + \bt/z + \sum_{d} 
  \sum_{n \geq 0} \sum_i \frac{q^d}{n!}  \bigg\langle  \frac{T_i}{z - \psi_1}, 1, \bt, 
   \ldots, \bt \bigg\rangle_{0,2+k, d}^{\cX, \on{tw}} T^i. 
\]  
Here $T_i$ is a basis for $H^*_{CR}(\cX; \Lambda)$ and $T^i$ is the dual basis 
with respect to the twisted pairing.  

The twisted invariants are related to invariants on the hypersurface.
In our case, $\cX = \cY = [\PP^4/\bar{G}]$, and
$E = \sO (5) \to \cY$.
It is easy to see 
that $E_{0,n,d} = R^0 \pi_* f^*(\sO (5))$ is a vector bundle.
The embedding $i: \cW \hookrightarrow \cY$ induces a morphism
$\iota: \sMbar_{0,n}(\cW, d)\hookrightarrow \sMbar_{0,n}(\cY, d)$.
It is well-known that
\begin{equation} \label{3.2.6}
 \iota_* [\sMbar_{0,n}(\cW, d)]^{\on{vir}} = 
 e(E_{0,n,d}) \cap [\sMbar_{0,n}(\cY, d)]^{\on{vir}}.
\end{equation}
A proof can be found in e.g.~\cite{CKL}. 
(That proof, given in the nonorbifold setting there, 
can be readily modified to the orbifold setting.)
This relates the twisted invariants on $\cY$ to the invariants on $\cW$.  
Assume that $\bt$ is restricted to $H^{even}_{CR}(\cY)$, then
\[
J^\cW(\bt, z) = i^* J^{\cY, tw}(\bt, z).
\]
Let us now further restrict $\bt$ to $H^{2}_{CR}(\cY)$.  
In our setting we may write an element of $H^{2}_{CR}(\cY)$ as 
\begin{equation} \label{e:3.2.7}
 \bt = tH + \sum\limits_{\{g| \on{age}(g) = 1\}} t^g \ii_g.
\end{equation}
Write the $J$-function of $\cY$ as
\[
 J^{\cY} (\bt) = \sum_d q^d J_d^{\cY}(\bt).
\]
For each $d$, define the modification factor
\[
 M^{E/\cW}_d := \prod_{m=1}^{5d} (5H + m z).
\]
(Note that we have taken the $\lambda =0$ limit in \cite{CCIT}.)

\begin{definition} \label{d:Ifunction}
Define the \emph{twisted $I$-function} by
\[
 I^{E} (\bt) := \sum_d q^d M^{E/\cW}_d J_d^{\cY}(\bt)
\]
Write
\begin{equation} \label{e:3.2.8}
 \begin{split}
  I^{E} (\bt, z) =  &I^E_e (t,z) + 
 \frac{1}{z}\left(\sum\limits_{\{g| \on{age}(g) = 1\}} t^g I^E_g(t,z) \right) \\
  &+\frac{1}{z}\left( \sum \limits_{\{g_1, g_2| \on{age}(g_i) = 1\}} t^{g_1}t^{g_2} 
  I^E_{g_1, g_2}(t,z)  + \ldots \right).
 \end{split}
\end{equation}
For $g$ such that $\on{age}(g) \leq 1$ (including $g = e$), 
define the $A$ model hypergeometric functions 
\begin{equation}\label{e:IA}
I^A_g(t,z) = i^*\left(I^E_g(t, z)\right).
\end{equation}
\end{definition}

\begin{theorem}\label{t:A-model} 
Given $g = (r_0, \ldots, r_4)$ such that the age shift of $\cW_g$ is at most 
$1$, there exist functions $F_0(t)$, $G_0(t)$, and $H_g(t)$, 
determined explicitly by $I^E_g(t, z)$ 
such that $F_0$ and $H_g$ ($g \neq 0$) are invertible, and 
\begin{equation}\label{e:A-model}
 J_g^{\cW}(\tau(t),z) = 
 \frac{I^A_g(t,z)}{H_g(t)} \qquad \text{where } 
 \tau(t) = \frac{G_0(t)}{F_0(t)}. 
\end{equation}
\end{theorem}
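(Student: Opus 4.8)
The plan is to derive Theorem~\ref{t:A-model} from the relation $J^{\cW}(\bt,z)=i^*J^{\cY,\mathrm{tw}}(\bt,z)$ together with Theorem~\ref{t:Zformula}, by the standard ``Birkhoff factorization / mirror map'' machinery. First I would restrict the twisted $I$-function $I^E(\bt,z)$ to $\bt\in H^2_{CR}(\cY)$ and expand as in \eqref{e:3.2.8}, extracting the components $I^E_g(t,z)$ for $\mathrm{age}(g)\le 1$; by \eqref{e:IA} their pullbacks are the $I^A_g(t,z)$. Since the modification factor $M^{E/\cW}_d=\prod_{m=1}^{5d}(5H+mz)$ introduces, degree by degree in $q$, only finitely many extra linear factors in $z$ beyond those already present in $J^{\cY}_d$ (whose explicit hypergeometric form is given by Theorem~\ref{t:Zformula} via the $Z^{\cY}_g=Y^{\cY}_g$ identity), each $I^A_g(t,z)$ is, as a power series in $1/z$, of the shape $F_0(t)\,\ii_g + (\text{order }1/z\text{ terms})$ when $g=e$ (yielding $F_0$, $G_0$ from the coefficients of $z^0$ and $z^{-1}$), and $H_g(t)\,\ii_g + (\text{lower order in }1/z)$ when $\mathrm{age}(g)=1$ (yielding $H_g$). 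The invertibility of $F_0$ and of $H_g$ for $g\ne e$ is then a matter of checking the constant term in $q$ is nonzero — for $F_0$ it is $1$, for $H_g$ it is the nonzero scalar attached to $\ii_g$ — so each is a unit in $\CC[[q]]$ after the change of variables.

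The key computational step is to show $I^A_g$ and $J^{\cW}_g$ lie on the same Lagrangian cone, i.e.\ that $I^A_g(t,z)/H_g(t)$ equals $J^{\cW}_g(\tau(t),z)$ after the mirror map $\tau(t)=G_0(t)/F_0(t)$. Here I would invoke the orbifold quantum Lefschetz / Givental--Coates--Corti--Iritani--Tseng formalism: \eqref{3.2.6} identifies the $E$-twisted theory of $\cY$ with the ambient theory of $\cW$, and the twisted $I$-function $I^E$ lies on the twisted Lagrangian cone of $\cY$ because $J^{\cY}$ lies on the cone of $\cY$ and multiplication by $M^{E/\cW}_d$ is exactly the hypergeometric modification prescribed by quantum Lefschetz. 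Pulling back by $i^*$ and using that $H^{even}_{CR}(\cW)=i^*H^*_{CR}(\cY)$ is a self-dual subring under the Poincar\'e pairing, one concludes $I^A_g$ lies on the cone of $\cW$; then the general structure of points on the cone forces $I^A_g(t,z)=H_g(t)\,J^{\cW}_g(\tau(t),z)$ once the asymptotic $z^0$-- and $z^{-1}$--coefficients are matched, which is precisely how $F_0,G_0,H_g$ and $\tau$ are defined.

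The main obstacle I anticipate is the bookkeeping in passing from the $z\to\infty$ asymptotics of $I^E$ (in which the divisor $H$ and the twisted sectors $\ii_g$ all interact through the factor $e^{tH/z}$ and through $M^{E/\cW}_d$) to a clean identification of which combination of coefficients is $F_0$, which is $G_0$, and which is $H_g$, and in particular checking that no further $z^{-k}$ ($k\ge 2$) corrections are needed — that is, that the ``naive'' Birkhoff factorization already terminates because the only nontrivial quantum corrections to $J^{\cW}_g$ in the relevant sectors are one-dimensional (cf.\ \eqref{e:dim3}, \eqref{e:dim1}, \eqref{e:dim0}, where each $J^{\cW}_g$ has only two components). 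Once that finiteness is pinned down, the remaining verification that $\tau(t)=G_0(t)/F_0(t)$ is the correct shift is a routine comparison of the $1/z$--coefficients on both sides, using the divisor and string equations. The proof then concludes by applying \eqref{e:IA} and unwinding \eqref{e:A-model}.
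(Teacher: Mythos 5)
Your overall route is the same as the paper's: invoke the orbifold quantum Lefschetz theorem of Coates--Corti--Iritani--Tseng for the twisted theory on $\cY$ (Theorem~\ref{t:QLHT}), pull back along $i:\cW\hookrightarrow\cY$ using $J^{\cW}=i^*J^{\cY,\on{tw}}$, and read off $F_0$, $G_0$, $H_g$ from the large-$z$ asymptotics of $I^E$. So the strategy is right. But the step you defer as ``bookkeeping'' is the entire mathematical content of the paper's argument (its Lemma~\ref{l:3.8}), and it is not routine. Two things must be verified. First, the quantum Lefschetz mirror map $\tau(\bt)=G(\bt)/F(\bt)$ is a priori a map on all of $H^2_{CR}(\cY)$, with components along every age-one class $\ii_g$; to restrict \eqref{e:QLHT} to the one-dimensional slice $\bt=tH$ you must know that the twisted components of $G(\bt)$ vanish there, i.e.\ $\ol{G_g}(\bt)=t^gG_g(t)+O(2)$. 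Otherwise the substitution $\tau(t)=G_0(t)/F_0(t)$ is not the restriction of the \cite{CCIT} mirror map and \eqref{e:A-model} does not follow. Second, for $g\neq e$ the function $J^{\cW}_g$ is not obtained by restricting the identity \eqref{e:QLHT} but by applying $z\,\partial/\partial t^g$ to it and \emph{then} setting all $t^g=0$ (since $I^E_g$ is defined in \eqref{e:3.2.8} as the coefficient of $t^g/z$); your phrase ``$I^A_g$ lies on the cone of $\cW$'' conflates a tangent vector to the cone with a point on it, and the $t^g$-derivative produces extra terms (from the product rule applied to $I^E/F$ and from the $t^g$-dependence of $\tau$) that must be shown to vanish at $t^g=0$.

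Both verifications rest on the explicit hypergeometric formulas of Corollary~\ref{c:ambientJformula}: specifically on the observations that $i^*(\ii_{g_1})=i^*(\ii_{g_2})=0$ for dimension reasons in case (ii), that $i^*(\ii_{g_2})=0$ in case (iii), and that the surviving $\ii_{g_1}$-term in case (iii) contributes only at order $z^{-2}$ and below after the modification by $M^{E/\cW}_d$. Without these inputs the remainder $R$ in Lemma~\ref{l:3.8} could contaminate either the mirror map or the $z^0$- and $z^{-1}$-coefficients. Your proposal correctly identifies the skeleton of the paper's proof but leaves unproved exactly the lemma that makes it work.
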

\begin{remark} 
In the statement of the theorem, $F_0(t)$ and $G_0(t)$ do not depend on $g$, 
so the \emph{mirror map} $t \mapsto \tau(t) = G_0(t)/F_0(t)$ is well defined. 
\end{remark}

\subsection{Proof of Theorem~\ref{t:A-model}} \label{s:3.3}

There are two key ingredients in the proof.
The first one is the version of
\emph{quantum Lefschetz hyperplane theorem} (QLHT) for orbifolds
proved in \cite{CCIT}.
By Equation~\eqref{e:3.1.1}, $\cW$ is a hyperplane section of $\cY$
and hence $J^{\cW}$ can be calculated by QLHT.
Corollary~5.1 in \cite{CCIT} in particular implies the following:

\begin{theorem}[\cite{CCIT}]\label{t:QLHT} 
Let the setting be as above, with $E= \sO(5) \to \cY$. Then
\begin{equation} \label{e:3.3.1}
  I^{E}(\bt, z) = F(\bt) + \frac{G(\bt)}{z} + O(z^{-2}) 
\end{equation} 
for some $F$ and $G$ with $F$ scalar valued and invertible, and 
\begin{equation}\label{e:QLHT} 
  J^{\cY, \on{tw}}(\tau(\bt), z) = \frac{I^{E}(\bt, z)}
  {F(\bt)} \qquad \text{where } \tau(\bt) = \frac{G(\bt)}{F(\bt)}.
\end{equation}
\end{theorem}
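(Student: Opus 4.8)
The plan is to deduce Theorem~\ref{t:QLHT} from the orbifold quantum Lefschetz / quantum Riemann--Roch machinery of \cite{CCIT}, phrased in Givental's symplectic formalism. First I would set up the Givental space $\mathcal{H}_\cY = H^*_{CR}(\cY)((z^{-1}))$ with its symplectic form $\Omega(f,g) = \Res_{z=0} \left( f(-z), g(z) \right)^{\cY}_{CR}$ and standard polarization $\mathcal{H}_\cY = \mathcal{H}_+ \oplus \mathcal{H}_-$. The genus-zero untwisted theory of $\cY$ defines a Lagrangian cone $\mathcal{L}^\cY \subset \mathcal{H}_\cY$, and the Euler-class-twisted theory of \eqref{3.2.6} (the theory defining $J^{\cY,\on{tw}}$, with $E = \sO(5)$) defines a second cone $\mathcal{L}^{\on{tw}}$; each $J$-function is the distinguished slice of its own cone. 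In this language the theorem splits into two assertions: that $I^E(\bt,z)$ lies on $\mathcal{L}^{\on{tw}}$, and that any family on $\mathcal{L}^{\on{tw}}$ of the asymptotic shape \eqref{e:3.3.1} is recovered from $J^{\cY,\on{tw}}$ by the normalization and reparametrization of \eqref{e:QLHT}.

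The first main step is the orbifold quantum Riemann--Roch theorem: the twisted cone is the image of the untwisted cone under an explicit symplectic loop-group transformation $\Delta$ built from the Euler class of $E$. Concretely $\Delta = \exp\left( \sum \ldots \right)$ is assembled from the Bernoulli-number expansion of the logarithm of the twisting class, with one contribution for each component of $I\cY$ so that the age shifts and twisted sectors enter correctly; this is the orbifold extension of quantum Riemann--Roch carried out in \cite{CCIT}. Assuming it, $\mathcal{L}^{\on{tw}} = \Delta \cdot \mathcal{L}^\cY$.

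The second, and I expect hardest, step is to show that the hypergeometric modification realizes $\Delta$ on the distinguished slice, i.e.\ that $I^E = \sum_d q^d M_d^{E/\cW} J_d^\cY(\bt)$ lies on $\mathcal{L}^{\on{tw}}$. The mechanism is that multiplying the degree-$d$ component of $J^\cY$ by $M_d^{E/\cW} = \prod_{m=1}^{5d}(5H+mz)$ is precisely the effect of the Euler class of $E_{0,n,d} = R^0\pi_* f^*\sO(5)$: since $\sO(5)$ restricts to a positive bundle on each genus-zero component one has $R^1\pi_* f^* \sO(5) = 0$ and $E_{0,n,d}$ is a vector bundle of rank $5d+1$ whose equivariant Chern roots are recorded by the integers $m = 1, \ldots, 5d$ against the divisor $5H$, while the divisor equation (\cite{AGV}) converts the factors $q^d e^{dt}$ into the shifts needed to match $\Delta$ acting on a point of $\mathcal{L}^\cY$. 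Carrying out this matching \emph{over the inertia stack} --- tracking the contribution of each twisted sector $\cY_g$ together with its age-shifted grading --- is the delicate bookkeeping where I would expect to spend most of the effort, and it is the orbifold content of \cite{CCIT}, Corollary~5.1. Once it is done, $I^E \in \mathcal{L}^{\on{tw}}$.

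The final step extracts $J^{\cY,\on{tw}}$ from the cone. I would first verify the expansion \eqref{e:3.3.1} by an order-in-$z$ count: the numerator $M_d^{E/\cW}$ has top $z$-degree $5d$, which cancels the ambient denominators already present in $J_d^\cY$, so every summand, and hence $I^E$, is a power series in $1/z$; a short computation of the $z^0$-coefficient shows it is a scalar $F(\bt)$, with $G(\bt)$ the $z^{-1}$-coefficient. Then I would invoke the Coates--Givental uniqueness principle: $\mathcal{L}^{\on{tw}}$ is a cone, ruled by its tangent spaces $T$ with $T \cap \mathcal{L}^{\on{tw}} = zT$, and its $J$-slice $J^{\cY,\on{tw}}(\tau, z)$ is singled out among all families on the cone as the one whose leading ($z^0$) coefficient is the unit $1$. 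Dividing $I^E$ by the scalar $F(\bt)$ normalizes it to this form --- and keeps it on the cone, since for each fixed $\bt$ the element $I^E(\bt)/F(\bt)$ is a scalar multiple of the point $I^E(\bt)$ --- with $z^{-1}$-coefficient $\tau(\bt) = G(\bt)/F(\bt)$. Uniqueness then forces $J^{\cY,\on{tw}}(\tau(\bt), z) = I^E(\bt,z)/F(\bt)$, which is exactly \eqref{e:QLHT}.
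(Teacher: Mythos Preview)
The paper does not give its own proof of Theorem~\ref{t:QLHT}: it is stated as a direct citation of Corollary~5.1 in \cite{CCIT} and used as a black box. Your proposal is a faithful outline of the argument actually carried out in \cite{CCIT}---Givental's symplectic formalism, the orbifold quantum Riemann--Roch transformation $\Delta$ identifying $\mathcal{L}^{\on{tw}}$ with $\Delta\cdot\mathcal{L}^\cY$, the hypergeometric modification placing $I^E$ on the twisted cone, and the cone/uniqueness argument to extract $J^{\cY,\on{tw}}$ after normalizing by $F(\bt)$---so there is no divergence of approach to discuss. One small caution on the asymptotic step: here $\bt$ ranges over all of $H^2_{CR}(\cY)$ (including twisted-sector insertions $t^g\ii_g$), so verifying that the $z^0$-coefficient $F(\bt)$ is genuinely scalar-valued requires a bit more than the $5d$-versus-$5d$ power count you sketch; in the paper this is checked explicitly in Lemma~\ref{l:3.8} using Corollary~\ref{c:ambientJformula}, and in \cite{CCIT} it follows from a careful grading argument over the inertia stack.
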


The second ingredient is the explicit formula of $J_g^{\cY}$ 
from Section~\ref{s:2}.
Note that we are only concerned with those $g$ such that $i^* \ii_g \neq 0$
and $\on{age}(\ii_g) \leq 1$.
Therefore only those $J_g^{\cY}$ are listed.
The following is a straightforward corollary of 
Theorem~\ref{t:Zformula}, \eqref{e:2.2.2} and \eqref{e:2.2.3}
by equating the terms $Q^{c(d,h)} \ii_{h^{-1}} H^k$ of $Z_g$ with the terms
$q^d e^{dt} \ii_{h^{-1}} H^k$ of $J^\cY_g$.

\begin{corollary} \label{c:ambientJformula} 
The functions $J^{\cY}_g(t,z)$ are given by the following formulas.
\begin{enumerate} 
\item[(i)]  If $g = e = (0,0,0,0,0)$, 
\begin{equation}\label{e:jdim4}
 J_e^\cY = e^{t H/z}\left(1 + \sum_{\langle d\rangle = 0} 
 q^d e^{dt} \frac{1}{\prod \limits_{\substack{0 < b \leq d\\ \langle b \rangle = 0 }} 
   (bz - H)^5 }\right). 
\end{equation}
\item[(ii)] If $g = (0,0,0, r_1, r_2)$, 
let $g_1 = (-r_1,-r_1,-r_1,0, r_2-r_1) \, (\on{mod} 5)$ and let 
${g_2 = (-r_2,-r_2,-r_2,r_1-r_2,0) \, (\on{mod} 5)}$.  Then
\begin{align}
 J^Y_g = 
  &e^{t H/z}\ii_g \left(1 + \sum_{\langle d \rangle = 0}   \frac{q^d e^{dt}}
 {\prod \limits_{\substack{0 < b \leq d \\ \langle b \rangle = 0}} (H+bz)^3  \prod 
 \limits_{\substack{0 < b \leq d \\ \langle b \rangle = \left\langle \frac{r_2}{5} \right\rangle}}
 (H+bz)\prod 
 \limits_{\substack{0 < b \leq d \\ \langle b \rangle = \left\langle \frac{r_1}{5}\right\rangle}} 
 (H+bz)} \right)  \label{e:jdim1} \\
+ &e^{t H/z}\ii_{g_1}\left(
 \sum_{\langle d \rangle = \left\langle \frac{r_1}{5}\right\rangle} \frac{q^d e^{dt}}{\prod 
 \limits_{\substack{0 < b \leq d \\ \langle b \rangle = \left\langle \frac{r_1}{5} \right\rangle}}
 (H+bz)^3\prod 
 \limits_{\substack{0 < b \leq d \\ \langle b \rangle = 0}} (H+bz)\prod 
 \limits_{\substack{0 < b \leq d \\ \langle b \rangle = \left\langle \frac{2r_1}{5} \right\rangle}}
 (H+bz)}\right) \nonumber \\
+ &e^{t H/z}\ii_{g_2} \left(
 \sum_{\langle d \rangle = \left\langle \frac{r_2}{5}\right\rangle} \frac{q^d e^{dt}}{\prod 
 \limits_{\substack{0 < b \leq d \\ \langle b \rangle = \left\langle \frac{r_2}{5} \right\rangle}} 
 (H+bz)^3\prod 
 \limits_{\substack{0 < b \leq d \\ \langle b \rangle = \left\langle \frac{2r_2}{5} \right\rangle}} 
 (H+bz)\prod 
 \limits_{\substack{0 < b \leq d \\ \langle b \rangle = 0}} (H+bz)}\right). \nonumber
\end{align}
\item[(iii)] If $g = (0,0, r_1, r_1, r_2)$, let 
$g_1 = (-r_1, -r_1, 0,0,r_2 - r_1) \, (\on{mod} 5)$ and let 
$g_2 = (-r_2, -r_2, r_1 - r_2, r_1 - r_2, 0 ) \, (\on{mod} 5)$.  Then 
\begin{align}
  J^Y_g = 
  &e^{t H/z}\ii_g  \left( 1 + \sum_{\langle d \rangle = 0} \frac{q^d e^{dt}}
 {\prod \limits_{\substack{0 < b \leq d \\ \langle b \rangle = 0}} (H+bz)^2\prod 
 \limits_{\substack{0 < b \leq d \\ \langle b \rangle = \left\langle \frac{3r_2}{5} \right\rangle}} 
   (H+bz)^2\prod 
 \limits_{\substack{0 < b \leq d \\ \langle b \rangle = \left\langle \frac{2r_1}{5}\right\rangle}}
 (H+bz)}  \right)   \label{e:jdim0}  \\ 
  + &e^{t H/z}\ii_{g_1} \left(
 \sum_{\langle d \rangle = \left\langle \frac{r_1}{5}\right\rangle}  
 \frac{q^d e^{dt}}{\prod 
 \limits_{\substack{0 < b \leq d \\ \langle b \rangle = \left\langle \frac{r_1}{5} \right\rangle}} 
 (H+bz)^2\prod \limits_{\substack{0 < b \leq d \\ \langle b \rangle = 0}} 
 (H+bz)^2\prod 
 \limits_{\substack{0 < b \leq d \\ \langle b \rangle = \left\langle \frac{r_2}{5} \right\rangle}}
 (H+bz)}\right) \nonumber \\  
 + &e^{t H/z}\ii_{g_2} \left( 
 \sum_{\langle d \rangle = \left\langle \frac{r_2}{5}\right\rangle} 
 \frac{q^d e^{dt}}{\prod 
  \limits_{\substack{0 < b \leq d \\ \langle b \rangle = \left\langle \frac{r_2}{5} \right\rangle}}
 (H+bz)^2\prod 
 \limits_{\substack{0 < b \leq d \\ \langle b \rangle = \left\langle \frac{2r_1}{5} \right\rangle}}
 (H+bz)^2\prod \limits_{\substack{0 < b \leq d \\ \langle b \rangle = 0}} (H+bz)}\right).
 \nonumber
\end{align}
\end{enumerate}

In fact, due to the age requirement,
there are only two choices in case (ii) up to permutation:
$(r_1, r_2)=(2,3)$ or $(1,4)$.
In case (iii), only $(r_1, r_2)=(1,3)$ or $(2,1)$ are possible.
\end{corollary}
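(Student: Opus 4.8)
The plan is to obtain the three formulas by transcribing the nonequivariant identity $Z_g = Y_g$ of Theorem~\ref{t:Zformula} into the language of the small $J$-function, and then to expand the explicit hypergeometric expression for $Y_g$ in each of the three cases. The only geometric input is Theorem~\ref{t:Zformula}; everything else is bookkeeping.

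First I would make the dictionary between $Z_g$ and $J^\cY_g$ precise. Comparing the definition of $Z_g$ with the expression \eqref{e:sJ} for $J^\cY_g$ at $\bt = tH$, and invoking Proposition~\ref{p:mapsallowed}(i) to see that the only sectors contributing to $J^\cY_g$ are the $\cY_h$ with $[h]=[g]^{-1}$, one checks that the parenthetical factor of $J^\cY_g$ is obtained from $Z_g$ by the substitution $Q^{c(d,h)}\mapsto q^d e^{dt}$ carried out separately in each $\ii_{h^{-1}}$-component, followed by multiplication by the prefactor $e^{tH/z}$. This substitution is unambiguous: within a fixed sector the exponent $c(\cdot,h)$ is, by Lemma~\ref{l:dime}, an affine strictly increasing function of $d$, so $d$ is recovered from the power of $Q$. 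Hence it suffices to expand $Y_g$ from \eqref{e:2.2.2}--\eqref{e:2.2.3} in the limit $\lambda_k\to 0$ and apply the substitution.

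Next, for each case I would carry out the combinatorics of the sets $S(d,h)$. The contributing $h$ are exactly the representatives in the coset $[g]^{-1}$ whose fixed locus $\cY_h$ is nonempty, i.e.\ those with at least one vanishing coordinate; they arise as $g^{-1}+c\cdot(1,1,1,1,1)$ as $c$ runs over the residues making some coordinate vanish, and their inverses are the classes $\ii_g,\ii_{g_1},\ii_{g_2}$ appearing in the formulas. For such an $h$ the degree shift $d(h,g)=\langle (r_i(h)+r_i(g))/5\rangle$ determines the range $\langle d\rangle=\cdots$ of the associated sum, while grouping the five coordinates of $h$ by their common residue $r_k(h)/5$ turns $\prod_{(b,k)\in S(d,h)}(bz+H)$ into a product of factors $(H+bz)$ whose multiplicities equal the sizes of the residue classes, as recorded in \eqref{e:jdim4}--\eqref{e:jdim0}. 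The residues are then put in the normal form of the statement using the constraint $\sum_i r_i\equiv 0$: for instance in case (iii), where $2r_1+r_2\equiv 0\,(\on{mod} 5)$, one has $\langle 3r_2/5\rangle=\langle -r_1/5\rangle$ and $\langle 2r_1/5\rangle=\langle -r_2/5\rangle$, which identifies the multiplicity-two and multiplicity-one factors of the $\ii_g$-summand ($h=g^{-1}$) with the coordinates of $h$ equal to $-r_1$ and $-r_2$; the $\ii_{g_1}$- and $\ii_{g_2}$-summands are handled identically. The precise signs $(bz+H)$ are fixed by the $\lambda_k\to 0$ limit of \eqref{e:2.2.2} together with the Poincar\'e-duality normalization of the dual basis $\{T^i_h\}$, which also produces the factors of $25$. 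Finally the classification of admissible $(r_1,r_2)$ is elementary: since $\on{age}(g)=\sum_i r_i/5$ with $\sum_i r_i\equiv 0\,(\on{mod} 5)$ and $g\neq e$, the bound $\on{age}(g)\leq 1$ forces $\sum_i r_i=5$, giving $r_1+r_2=5$ with $r_1\neq r_2$ (so $(2,3)$ or $(1,4)$ up to permutation) in case (ii), and $2r_1+r_2=5$ (so $(1,3)$ or $(2,1)$) in case (iii).

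The main obstacle is organizational rather than conceptual: keeping the enumeration of coset representatives $h$, their inverses $g_1,g_2$, the sets $S(d,h)$, and the residue rewritings mutually consistent across the three cases, and verifying that the substitution $Q^{c(d,h)}\mapsto q^d e^{dt}$ reproduces exactly the degree ranges and factor multiplicities claimed in \eqref{e:jdim4}--\eqref{e:jdim0}.
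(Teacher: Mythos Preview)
Your proposal is correct and follows essentially the same route as the paper: the paper's proof is a one-sentence remark that the corollary follows from Theorem~\ref{t:Zformula} together with \eqref{e:2.2.2}--\eqref{e:2.2.3} by equating the $Q^{c(d,h)}\ii_{h^{-1}}H^k$ terms of $Z_g$ with the $q^d e^{dt}\ii_{h^{-1}}H^k$ terms of $J^\cY_g$, which is exactly the dictionary and case-by-case expansion you describe. Your write-up simply makes explicit the bookkeeping (injectivity of $d\mapsto c(d,h)$ via Lemma~\ref{l:dime}, enumeration of the contributing coset representatives $h$, and the residue identities from $\sum_i r_i\equiv 0$) that the paper leaves implicit.
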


\begin{lemma} \label{l:3.8}
There are scalar valued functions $F_0(t), G_0(t)$ and 
$G_g(t)$ for each $g$ with $\on{age}(g) =1$, such that
\begin{equation*} 
  i^* \left( I^E(\bt, z) \right) = F_0(t) + \frac{G_0(t) H}{z} + 
  \sum_{\on{age}(g)=1} \frac{t^g G_g(t) \ii_g}{z} + R,
\end{equation*}
where $R$ denotes the \emph{remainder}, consisting of terms with
either the degrees in $t^g$'s greater or equal to $2$ or the degree
in $z^{-1}$ greater or equal to $2$.
In other words, if we write $G(\bt)$ from \eqref{e:3.3.1} as
\[
 G(\bt) = \ol{G_0}(\bt)H + \sum_g \ol{G_g}(\bt) \ii_g
\]
and denote $O(2)$ the terms with the degrees in $t^g$'s greater or equal to 
$2$, then
\[
 F(\bt) = F_0(t) + O(2), \quad \ol{G_0}(\bt) = G_0(t) + O(2), 
 \quad \ol{G_g}(\bt) = t^g G_g(t) + O(2).
\]
\end{lemma}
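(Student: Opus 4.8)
The plan is to unpack Theorem~\ref{t:QLHT} by combining the small--variable decomposition \eqref{e:3.2.8} with the explicit formulas for $J^{\cY}_g$ in Corollary~\ref{c:ambientJformula}. First I would restrict $\bt$ to $H^2_{CR}(\cY)$, write $\bt = tH + \sum_{\on{age}(g)=1} t^g \ii_g$ as in \eqref{e:3.2.7}, and apply $i^*$ to \eqref{e:3.2.8}. The coefficient of $(t^g)^0$ is then $I^A_e(t,z) = i^*I^E_e(t,z)$, the coefficient of $t^g$ (linearly) is $\frac{1}{z} I^A_g(t,z) = \frac{1}{z} i^* I^E_g(t,z)$ by \eqref{e:IA}, and every term of total degree $\geq 2$ in the $t^g$'s is absorbed into the remainder $R$ (such terms are of the required type no matter what their $z$-expansion looks like). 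It remains to expand $i^*I^E_e$ and $i^*I^E_g$ (for $\on{age}(g)=1$) in powers of $1/z$ and extract the first two coefficients.

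For the untwisted piece, note that $J^{\cY}(tH,z)$ lies entirely in the untwisted sector $H^*(\cY_e) = H^*(\PP^4)$ by Proposition~\ref{p:mapsallowed}(i), and is given by \eqref{e:jdim4}. Multiplying the degree-$d$ summand by $M^{E/\cW}_d = \prod_{m=1}^{5d}(5H+mz)$ and cancelling the leading powers of $z$, one sees that numerator and denominator have the same $z$-degree ($5d$); hence each summand is a $d$-dependent constant times the exponential of a power series in $H/z$ with vanishing constant term. Therefore $i^*I^E_e(t,z)$ is a genuine power series in $1/z$ whose $z^0$-coefficient is a scalar $F_0(t)$ and whose $z^{-1}$-coefficient is a scalar multiple $G_0(t)$ of $H$; invertibility of $F_0$ is visible from the $d=0$ term ($F_0 = 1 + O(q)$) and is in any case forced by Theorem~\ref{t:QLHT}.

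The crux of the argument -- and the step I expect to be the main obstacle -- is the parallel analysis of $i^*I^E_g(t,z)$ for $\on{age}(g) = 1$. First, $i^*\ii_g = 0$ whenever $g \notin \bar{S}$ (the $0$-dimensional sector $\cY_g$ does not meet $\cW$), and inspecting the support of $J^{\cY}_g$ one checks that in that case $i^* I^E_g$ has no $z^0$-term at all; so the only sectors contributing a $z^{-1}$ term proportional to $t^g$ are those with $g \in \bar{S}$, i.e.\ cases (ii) and (iii) of Corollary~\ref{c:ambientJformula}. For such $g$ I would run the same $z$-degree count as above, separately for the $\ii_g$-component and for the two auxiliary components $\ii_{g_1}, \ii_{g_2}$ of \eqref{e:jdim1} and \eqref{e:jdim0}: counting the factors in the relevant denominators (in the spirit of the dimension bookkeeping in Lemma~\ref{l:dime}) against the $5d$ factors of $M^{E/\cW}_d$ shows that, after $i^*$, the $\ii_g$-component has $z$-degree exactly $0$ with scalar leading coefficient, while each of the $\ii_{g_1}$- and $\ii_{g_2}$-components is either annihilated by $i^*$ (its sector fails to meet $\cW$) or has strictly negative $z$-degree. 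Hence $i^*I^E_g(t,z) = G_g(t)\ii_g + O(1/z)$ for an invertible scalar $G_g(t) = 1 + O(q)$.

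Assembling these: $i^*I^E(\bt,z) = F_0(t) + \frac{1}{z}G_0(t)H + \frac{1}{z}\sum_{\on{age}(g)=1} t^g G_g(t)\ii_g + R$, where $R$ now also absorbs the $O(1/z^2)$ tails of $i^*I^E_e$ and of $\frac{1}{z}i^*I^E_g$. Comparing the $z^0$- and $z^{-1}$-coefficients with \eqref{e:3.3.1} and using that $i^*H$ and $\{i^*\ii_g\}_{g \in \bar{S}}$ are linearly independent in $H^{even}_{CR}(\cW)$ yields $F(\bt) = F_0(t) + O(2)$, $\ol{G_0}(\bt) = G_0(t) + O(2)$, and $\ol{G_g}(\bt) = t^g G_g(t) + O(2)$, which is the assertion.
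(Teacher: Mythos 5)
Your treatment of the sectors $g \in \bar{S}$ is correct and is precisely the paper's argument: the auxiliary components $\ii_{g_1},\ii_{g_2}$ of Corollary~\ref{c:ambientJformula} are killed by $i^*$ for dimensional reasons in case (ii) (and $\ii_{g_2}$ in case (iii)), while the surviving $\ii_{g_1}$-component in case (iii) has strictly negative $z$-degree after multiplication by $M^{E/\cW}_d$ and so lands in $R$; your $z$-degree bookkeeping for the $\ii_g$-components and for $i^*I^E_e$ is also as the paper intends.

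The step that fails is the claim that for age-one $g \notin \bar{S}$ the pullback $i^*I^E_g$ has no $z^0$-term. Take $g=(0,1,1,1,2)$ (age $1$, only one vanishing coordinate, so $i^*\ii_g=0$). The row $J^{\cY}_g$ is supported on the sectors indexed by the presentations $(0,1,1,1,2)$, $(4,0,0,0,1)$ and $(3,4,4,4,0)$ of $[g]$, and the middle one has three vanishing coordinates, hence \emph{does} meet $\cW$. Running your own degree count for that component (here $h=(1,0,0,0,4)$ and $\langle d\rangle = 1/5$) gives $c(d,h)=5d$, so the denominator has exactly as many linear factors as $M^{E/\cW}_d$ and the component has $z$-degree exactly $0$; for instance the $d=1/5$ term contributes $5\,q^{1/5}e^{t/5}\,\ii_{(4,0,0,0,1)}$ to the $z^0$-part of $I^E_g$, and this survives $i^*$. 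So for such $g$ the $t^g$-linear, $z^{-1}$-part of $i^*I^E$ is a nonzero multiple of $i^*\ii_{(4,0,0,0,1)}$ rather than of $i^*\ii_g=0$, contradicting your assertion. The paper sidesteps this by restricting attention from the outset to those $g$ with $i^*\ii_g\neq 0$ (see the preamble to Corollary~\ref{c:ambientJformula}); the lemma must accordingly be read with the twisted part of $\bt$ confined to sectors meeting $\cW$, which is all that the proof of Theorem~\ref{t:A-model} uses. Your argument needs the same restriction; as written, the vanishing you claim for $g\notin\bar{S}$ is false and cannot be repaired by inspection of supports alone.
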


\begin{proof}
The proof of this lemma follows from Corollary~\ref{c:ambientJformula}
together with the following observations.
First, in case (ii) $i^*(\ii_{g_1})= i^*(\ii_{g_2}) =0$ due to dimensional
reasons.
Similarly with $i^*(\ii_{g_2})=0$ in case (iii).
Secondly, in case (iii) the $\ii_{g_1}$ term has higher $z^{-1}$ power:
The modification factor contributes terms of $z^{5d}$ plus lower order (in $z$)
terms. 
$i^* J^{\cY}_g$ contributes $z^{-(5d+1)}$ plus higher order (in $z^{-1}$) terms.
The combined contribution goes to the remainder $R$.
\end{proof}

With all this preparation, it is easy to prove Theorem~\ref{t:A-model}.

\begin{proof}[Proof of Theorem~\ref{t:A-model}]
Start by pulling back the equation \eqref{e:QLHT} to $\cW$.
Setting all $t^g =0$ we get \eqref{e:A-model} for the case $g=e$ 
if we let $H_e = F_0$:
\[
I^A_e(t) = i^*I^E_e (t) = i^* I^E(\bt) |_{\bt =tH}.
\]
Here by $\bt=tH$ we mean that setting all $t^g=0$ in \eqref{e:3.2.7}.
In the case $g \neq e$, take the partial derivative of \eqref{e:QLHT}
with respect to $t^g$ and then set all $t^g=0$.
Note that from \eqref{e:3.2.8}, we have
\[
 I^A_g(t) = i^*I^E_g (t) 
 = z \frac{\partial}{\partial t^g} i^*I^E(\bt) |_{\bt =tH}. 
\]
By Lemma~\ref{l:3.8} all the ``extra terms'' vanish and \eqref{e:A-model}
follows for $g \neq e$ after letting $H_g(t) = G_g(t)$.
The proof is now complete.
\end{proof}

\section{Periods and Picard--Fuchs equations} \label{s:4}

The theory of variation of Hodge structures (VHS) is closely related to the 
\emph{$B$ model} of a Calabi--Yau variety $X$, which encodes information 
about the deformations of complex structures on $X$.
By the local Torelli theorem for Calabi--Yau's, the Kodaira--Spencer spaces
inject to the tangent spaces of period domains and one can investigate the 
deformations of $X$ via VHS, which can be described by a 
system of 
flat connections on cohomology vector bundles.

For the benefit of the readers who come from the GWT side of mirror 
symmetry, we give a brief and self-contained summary of the parts of VHS theory
which are related to our work: the Gauss--Manin connection and
the associated notions of the period matrix and Picard--Fuchs equations.
For a more detailed introduction the reader may consult \cite{pG2},
\cite{pG1}. 

\subsection{Gauss--Manin connections, periods, and Picard--Fuchs equations}\label{s:4.1}
Over a smooth family of projective varieties $\pi: \mathscr{X} \to S$ 
of relative dimension $n$, 
we can consider the higher direct image sheaf (tensored with $\sO_S$) on $S$:
\[
 R^n \pi_*\CC \otimes \sO_S.
\]
The fiber over a point $t \in S$ of this sheaf is $H^n(X_t)$.  
This sheaf is locally free, and is naturally endowed with a \emph{flat} 
connection $\nabla^{GM}$, the \emph{Gauss--Manin} connection.
It can be defined in terms of the flat sections given by the lattice 
$R^n \pi_* \ZZ$ in $R^n\pi_*\CC \to S$, a \emph{local system}.
The Hodge filtration can be described fiberwise by 
\[
 \left(\sF^p\right)_t \cong \oplus_{a \geq p} H^{a, n-a}(X_t).
\]

We will be particularly interested in the case when the base $S$ is one
dimensional. 
Suppose now $S$ is an open curve 
and the family $\pi$ extends to a flat family over a proper curve $\bar{S}$. 
The vector bundle $R^n\pi_*\CC \otimes \sO_{S}$ extends to a vector 
bundle $\sH \to \bar{S}$ whose fiber over $t$ in $ S$ consists of the 
middle cohomology group $H^n(X_t)$.  
While it is not true that $\nabla^{GM}$ extends to a connection on all of $\sH$,
the singularities which arise are at worst a regular singularities \cite{pD}. 
This means that after choosing local coordinates, the connection matrix 
 acquires at worst a logarithmic pole at $t= 0$.  
Nevertheless we may still speak of flat (multi-valued) sections of 
$\nabla^{GM}$, controlled by the monodromy.

Let $\{ \gamma_i \}$ be a basis of $H_n(X_{t_0})$.
Since $\pi: \mathscr{X} \to S$ is smooth, it is a locally trivial fibration
and $n$-cycles $\gamma_i$ can be extended to \emph{locally constant} 
cycles $\gamma_i(t)$.
Let $\omega_t$ be a (local) section of $\sH$.
The functions $\int_{\gamma(t)} \omega_t$ are called the \emph{periods} and
by the local constancy of $\gamma(t)$
\[
 \frac{d}{d t}\left( \int_{\gamma(t)} \omega_t  \right) = 
 \int_{\gamma(t)}  \nabla^{GM}_t s(t).
\]  

The periods satisfy the \emph{Picard--Fuchs equations}, defined as follows.
Taking successive derivatives of $\omega_t$ with respect to the connection 
gives a sequence of sections 
\[
  \omega_t, \nabla^{GM}_t\omega_t, \ldots, \left(\nabla^{GM}_t\right)^k\omega_t, 
  \ldots .
\]  
Because the rank of $\sH$ is finite, for some $k$ there will exist a relation 
between these sections of the form 
\[ 
  \left(\nabla_t^{GM}\right)^k \omega_t + 
\sum_{i = 0}^{k-1} f_{i}(t)\left(\nabla_t^{GW}\right)^i \omega_t  = 0.
\]  
The corresponding differential equation 
\begin{equation}\label{e:PF} 
 \left( \left(\frac{d}{d t}\right)^k + 
 \sum_{i = 0}^{k-1} f_{i}(t)\left(\frac{d}{d t}\right)^i \right) \left( \int_{\gamma(t)} \omega_t \right)=0
\end{equation} 
is the Picard--Fuchs equation for $\omega_t$.  
The situation when the dimension of $S$ is greater than one is 
essentially the same, but \eqref{e:PF} is replaced by a PDE.

Let $\{\phi_i\}_{i \in I}$ be a basis of sections of $\sH$.  
Then if $\{\gamma_i\}_{i \in I}$ is a basis of locally
constant $n$-cycles, we can 
write the fundamental solution matrix of the Gauss-Manin
connection in coordinates as \[S = \left( s_{ij} \right) 
\text{ with } s_{ij} = \int_{\gamma_j} \phi_i.\] With this choice of 
basis, we see that the $i^{th}$ row of $S$ gives the periods for 
the section $\phi_i$.

\begin{remark}
In the literature, often (but not always) the term \emph{periods} are 
reserved for the case when $\phi(t)$ is a (holomorphic) $n$-form, 
i.e.\ a section of $\sF^n$, and
Picard--Fuchs equations only for periods in this restricted sense.
Here, we choose to use these terms in a more general sense defined above.
Note, however, by the results in \cite{BG}, for Calabi--Yau threefolds the 
general Picard--Fuchs equations can be determined from the restricted ones.
\end{remark}

\begin{remark} \label{r:4.2}
Let $U$ denote the Kuranishi space of the Calabi-Yau $n$-fold $X$.  
For the purpose of this paper, we use the term (genus zero part of)
\emph{$B$ model of $X$} to denote the vector bundle 
$\sH \to U$ with 
the natural (flat) fiberwise pairing and the Gauss--Manin connection.
\end{remark}

\subsection{Griffiths--Dwork method}  \label{s:4.2}

Let us assume now that the family $X_t$ is a family of hypersurfaces
defined by homogeneous polynomials $Q_t$ of  degree $d$ in $\PP^{n+1}$.
In this case the \emph{Griffiths--Dwork method} can be employed to
explicitly calculate the Picard--Fuchs equations.
We summarize the relevant results of \cite{pG1} here.
 
The method relies on Griffiths' work in \cite{pG1} showing that one can
calculate the period integrals on $X_t$ as one of \emph{rational} forms 
on $\PP^{n+1}$.
\emph{For the time being, let us fix $t$ and suppress it in the notation.}
Griffiths first shows that in fact any class $\Omega$ 
in $H^{n+1}(\PP^{n+1} \setminus X)$ can be represented in cohomology
by a \emph{rational} $n+1$ form.  In particular, let $\Omega_0$ be the
canonical $n+1$-form on $\PP^{n+1}$:
$\Omega_0= \sum_{i = 0}^{n+1} (-1)^{i} x_i dx_0 \cdots \hat{dx_i} \cdots dx_{n+1}$.
We can represent $\Omega$ by a rational form with poles in $X$,
\[
 \Omega = \frac{P(x)}{Q(x)^{k}} \Omega_0
\]
where $P(x)$ is a homogeneous polynomial with degree
$k d -(n+2)$.

The rational $n+1$ forms are then related to regular $n$ forms on $X$
via the residue map.
More precisely, let $A^n_k(X)$ denote the space of rational 
$(n+1)$-forms on $\PP^{n+1}$ with poles of order at most $k$ on $X$, and let 
\[
 \cH_k(X) := {A^{n+1}_k(X)}/{dA^{n}_{k-1}(X)}.
\] 
This gives an obvious filtration
\[
 \cH_1(X) \subset \cH_2(X) \subset \cdots \subset \cH_{n+1}(X) =: \cH(X).
\]
This description of rational forms interacts nicely with the Hodge filtration
$F^p$ of the \emph{primitive classes}.  
Griffiths proves that the following diagram
\begin{equation}\label{e:commdiagram}
 \begin{array}{ccccccc}
 \cH_1(X)  &\subset & \cH_2(X) &
 \subset  & \cdots  &\subset & \cH_{n+1}(X) \\
\quad \downarrow \Res & & \quad \downarrow \Res & & & & 
\quad \downarrow \Res \\
  F^{n}&  \subset & F^{n-1}&
 \subset  & \cdots & \subset & F^{0}   
 \end{array}
\end{equation} 
is commutative, and that each vertical arrow is surjective.
In particular, $\cH_{k+1}(X)/\cH_{k}(X) \cong F^{n-k}/ F^{n-k+1}.$

Now, for each $n$-cycle $\gamma$ in $H_n(X)$, let
\[ 
 T: H_n(X) \to H_{n+1}(\PP^{n+1} \setminus X)
\]
be the \emph{tube map} such that $T(\gamma)$ is a sufficiently small 
$S^1$-bundle around $\gamma$ in $\PP^{n+1} \setminus X$.
Griffiths then shows that the tube map is \emph{surjective} in general
and also \emph{injective when $n$ is odd}.
\begin{theorem} \label{t:4.2}
All primitive classes on $X$ can be represented as residues of 
rational forms on  $\PP^{n+1}$ with poles on $X$.
This representation is unique when $n$ is odd.
\end{theorem}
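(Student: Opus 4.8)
The plan is to deduce Theorem~\ref{t:4.2} from the two facts recalled above: every vertical residue arrow in \eqref{e:commdiagram} is onto, and the tube map $T$ is surjective in general and bijective when $n$ is odd. Write $U = \PP^{n+1}\setminus X$ and identify $\cH(X) = \cH_{n+1}(X)$ with the de Rham cohomology $H^{n+1}(U)$. For \emph{existence}: \eqref{e:commdiagram} is a morphism of filtered vector spaces from $\cH_\bullet(X)$ (filtered by pole order) to $F^\bullet$ (the Hodge filtration of the primitive part), surjective on each associated graded piece $\cH_{k+1}(X)/\cH_k(X)\twoheadrightarrow F^{n-k}/F^{n-k+1}$. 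A straightforward induction on the pole order then gives surjectivity of $\Res\colon\cH(X)\to F^0 = H^n_{\mathrm{prim}}(X)$ on the total spaces: given $\omega\in F^0$, use surjectivity on the top graded piece to find $\Omega\in\cH_{n+1}(X)$ with $\omega-\Res(\Omega)\in F^1$, then repeat on $\cH_n(X)$, and so on, peeling off one pole order at each step; the sum of the forms produced lies in $\cH(X)$ and has residue $\omega$. This gives the first assertion.

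For \emph{uniqueness when $n$ is odd}, I would use the Leray (tube) residue formula $\int_{T(\gamma)}\Omega = 2\pi\sqrt{-1}\int_{\gamma}\Res(\Omega)$, valid for all $\gamma\in H_n(X)$ and all rational $\Omega\in\cH(X)$. Together with the perfect homology--cohomology pairings $H_{n+1}(U)\times H^{n+1}(U)\to\CC$ and $H_n(X)\times H^n(X)\to\CC$ (universal coefficients over $\CC$), this identifies $\Res\colon\cH(X)=H^{n+1}(U)\to H^n(X)$, up to the scalar $2\pi\sqrt{-1}$, with the transpose of the tube map $T\colon H_n(X)\to H_{n+1}(U)$. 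When $n$ is odd $T$ is an isomorphism --- surjective in general and injective for $n$ odd --- hence so is its transpose $\Res$; in particular $\Res$ is injective, which is exactly the asserted uniqueness (two rational forms with pole on $X$ and equal residue differ by an element of $dA^n_n(X)$). Note also that for $n$ odd one has $H^n(X) = H^n_{\mathrm{prim}}(X)$, since by the Lefschetz hyperplane theorem $H^{n-2}(X)\cong H^{n-2}(\PP^{n+1}) = 0$; so the target of the isomorphism is genuinely the space of primitive classes, consistent with the existence part.

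The substantive inputs --- surjectivity of the vertical maps of \eqref{e:commdiagram} and the surjectivity/injectivity of $T$ --- are both cited from Griffiths' work, so what remains is the compatibility linking the two descriptions. I expect the only genuinely delicate step to be the precise statement and proof of the residue (tube) formula, i.e.\ checking that integrating a rational form $\Omega$ over a tube $T(\gamma)$ computes $2\pi\sqrt{-1}$ times the period $\int_\gamma\Res(\Omega)$ of its residue; granting that, Theorem~\ref{t:4.2} follows formally by linear duality applied to $T$.
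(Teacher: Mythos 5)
Your proposal is correct and takes essentially the same route as the paper, whose own (one-line) proof likewise combines the surjectivity of the vertical residue arrows in \eqref{e:commdiagram}, the surjectivity/injectivity of the tube map $T$, and the residue formula $\tfrac{1}{2\pi i}\int_{T(\gamma)}\Omega=\int_\gamma\Res(\Omega)$. One cosmetic remark: under the duality you set up it is the \emph{surjectivity} of $T$ that transposes to injectivity of $\Res$, while injectivity of $T$ (for $n$ odd) transposes to surjectivity onto $H^n(X)=H^n_{\mathrm{prim}}(X)$; since you invoke the full bijectivity of $T$ for $n$ odd, your conclusion is unaffected.
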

This follows from the surjectivity/injectivity of $\Res$ and $T$, 
as well as the residue formula
\[ 
  \frac{1}{2 \pi i} \int_{T(\gamma)} \Omega = \int_\gamma \Res(\Omega).
\]

Next Griffiths relates the rational forms to the Jacobian ring.
Let $J(Q) = \langle \partial Q/ \partial x_0, \ldots, \partial Q /
\partial x_{n+1} \rangle$  be the Jacobian ideal of $Q$.  
\begin{theorem} \label{t:4.3}
\begin{equation} \label{e:filtration}
 \CC[ x_0, \ldots, x_{n+1}]_{dk - n - 1}/J(Q) 
\cong F^{n - k}/ F^{n + 1 - k} \cong PH^{n - k, k}(V).
\end{equation}
\end{theorem}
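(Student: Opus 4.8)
The plan is to recall the ingredients behind Griffiths' residue calculus and assemble them into the stated isomorphisms. The content of Theorem~\ref{t:4.3} is really two statements: first, that the graded piece $\CC[x_0,\dots,x_{n+1}]_{dk-n-1}/J(Q)$ is isomorphic to $F^{n-k}/F^{n+1-k}$; second, that this quotient of the Hodge filtration is the primitive Hodge component $PH^{n-k,k}(V)$. The second isomorphism is essentially the definition of the Hodge filtration restricted to primitive cohomology, $F^p \cap PH^n = \bigoplus_{a\ge p} PH^{a,n-a}$, so that $F^{n-k}/F^{n+1-k} \cong PH^{n-k,k}$; I would simply invoke this. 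The first isomorphism is the substantive one.

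For the first isomorphism, I would proceed as follows. From the commutative diagram \eqref{e:commdiagram} and the surjectivity of $\Res$ (together with the description of the filtration $\cH_k(X)$), one already has $\cH_{k}(X)/\cH_{k-1}(X)\cong F^{n-k+1}/F^{n-k+2}$; reindexing, $\cH_{k+1}(X)/\cH_{k}(X)\cong F^{n-k}/F^{n-k+1}$. So it suffices to identify $\cH_{k+1}(X)/\cH_{k}(X)$ with $\CC[x]_{d(k+1)-(n+2)}/J(Q)$ — equivalently, after shifting the pole-order index, $\cH_k(X)/\cH_{k-1}(X)\cong \CC[x]_{dk-n-2}/J(Q)$; one must be careful here to match the degree $dk-n-1$ appearing in the statement with the correct pole order, since a form $\tfrac{P}{Q^k}\Omega_0$ is homogeneous of degree zero exactly when $\deg P = kd-(n+2)$, and the statement's $dk-n-1$ corresponds to pole order $k$ with the convention that the relevant residue lands in $F^{n-k}$ — I would track this indexing explicitly at the start to avoid an off-by-one. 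The key computational input is Griffiths' reduction-of-pole-order formula: if $P = \sum_i A_i \,\partial Q/\partial x_i$ lies in the Jacobian ideal (in the appropriate degree), then
\[
 \frac{P}{Q^{k}}\Omega_0 \equiv \frac{c}{Q^{k-1}}\Bigl(\sum_i \frac{\partial A_i}{\partial x_i}\Bigr)\Omega_0 \pmod{dA^{n}_{k-1}(X)}
\]
for an explicit constant $c$. This shows that a numerator in $J(Q)$ can be rewritten with a lower-order pole, hence the map $\CC[x]_{dk-n-2}/J(Q) \to \cH_k(X)/\cH_{k-1}(X)$, $P \mapsto \bigl[\tfrac{P}{Q^k}\Omega_0\bigr]$, is well-defined. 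Surjectivity is immediate since every class in $\cH_k$ has such a representative. Injectivity — that if $\tfrac{P}{Q^k}\Omega_0 \in dA^n_{k-1}(X) + A^{n+1}_{k-1}(X)$ then $P \in J(Q)$ — is the converse direction of the pole-reduction analysis and uses that $X$ is smooth, so that $J(Q)$ is a complete intersection ideal and the Koszul complex on the partials $\partial Q/\partial x_i$ is exact; this is where smoothness of $V$ enters essentially.

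\textbf{The main obstacle} I anticipate is the injectivity half: showing that the only numerators yielding exact (or lower-pole-order) rational forms are those in the Jacobian ideal. This requires the exactness of the Koszul complex on $(\partial_0 Q, \dots, \partial_{n+1}Q)$ — guaranteed by smoothness of $X$ via the fact that these partials form a regular sequence in $\CC[x]$ — together with a careful bookkeeping of the exact forms $dA^n_{k-1}(X)$, since an arbitrary element of $A^n_{k-1}(X)$ whose differential has a pole of order $k$ must be shown to have numerator in $J(Q)$ up to lower-order terms. I would organize this by writing a general $(n)$-form with pole order $k-1$ in the standard basis, differentiating, and matching pole orders, reducing the claim to a Koszul-syzygy statement; the Euler relation $\sum_i x_i\,\partial_i Q = d\cdot Q$ is used repeatedly to handle the ambiguity coming from multiplying numerator and denominator by $Q$. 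Since the paper only needs this as quoted background (citing \cite{pG1}), I would present the argument in outline and refer to Griffiths' original paper, or to standard references such as \cite{CK} Chapter~5, for the full syzygy computation rather than reproducing it in detail.
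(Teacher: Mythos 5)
Your proposal follows essentially the same route as the paper, which simply quotes Griffiths' pole-reduction formula \eqref{e:relate} and asserts that the pole order of $P\,\Omega_0/Q^{k}$ can be lowered if and only if $P\in J(Q)$, deferring the substance (in particular the ``only if'' direction, i.e.\ your Koszul/regular-sequence step) to \cite{pG1}. Your extra care with the indexing is warranted: for $P\,\Omega_0/Q^{k}$ to be homogeneous of degree zero one needs $\deg P = kd-(n+2)$, which does not literally match the exponent $dk-n-1$ printed in \eqref{e:filtration}, so the indices should be normalized at the outset exactly as you propose.
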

The key relationship between rational forms is given by the following 
formula ((4.5) in \cite{pG1}) 
\begin{equation}\label{e:relate} 
\frac{\Omega_0}{Q(x)^{k}} 
\sum_{j = 0}^{n+1} B_j(x) \frac{\partial Q(x)}{\partial x_j} = 
\frac{1}{k-1}\frac{\Omega_0}{Q(x)^{k-1}} \sum_{j=0}^{n+1} 
\frac{\partial B_i(x)}{\partial x_j} + d \phi, 
\end{equation}  
where $\phi \in  A^n_{k-1}$.
Thus, the order of the pole of a form $\frac{P(x)}{{Q(x)}^{k}} \Omega_0$ 
can be lowered if and only if $P(x)$ is contained in $J(Q)$.
Thus by identifying the form 
$\Res \left(\frac{P(x)}{Q(x)^{k}} \Omega_0\right)$  with the homogeneous 
polynomial $P$, one obtains the isomorphism. 

The above results allow one to explicitly calculate the Picard--Fuchs
equations for certain families of forms $\omega_t$ on $X_t$.
As before, $X_t$ is a family of hypersurfaces defined by 
degree $d$ homogeneous polynomials $Q_t$.  
Then we can represent a family of forms as 
$\omega_t = \Res \left(\frac{P_t(x)}{Q_t(x)^{k}} \Omega_0\right)$.
Let $\gamma_t$ be a locally constant $n$ cycle as before, then
\[
 \begin{split}
  \frac{\partial}{\partial t} \int_{\gamma_t} \omega_t 
  =&\frac{\partial}{\partial t} \int_{\gamma_t}  
   \Res\left(\frac{P_t(x)}{Q_t(x)^{k}} \Omega_0\right)
  = \frac{\partial}{\partial t} \int_{T(\gamma_t)} 
   \frac{P_t(x)}{Q_t(x)^{k}} \Omega_0 \\
   = & \int_{T(\gamma_t)} \frac{\partial}{\partial t}
   \left(\frac{P_t(x)}{Q_t(x)^{k}}\Omega_0\right)
  =  \int_{\gamma_t} \Res\left(\frac{\partial}{\partial t}
   \left(\frac{P_t(x)}{Q_t(x)^{k}}\Omega_0\right)\right).
 \end{split}
\]
The third equality follows because a small change in $T(\gamma(t))$ will not change
its homology class.
In other words, letting $\nabla^{GM}$ denote the Gauss--Manin connection,  
\[
 \nabla^{GM}_t \Res\left(\frac{P_t(x)}{Q_t(x)^{k}}\Omega_0\right) = 
 \Res\left(\frac{\partial}{\partial t}
 \left(\frac{P_t(x)}{Q_t(x)^{k}}\Omega_0\right)\right),
\]  
allowing one to obtain the Picard--Fuchs equations of $\omega_t$
via explicit calculations of the polynomials (in the Jacobian rings).
An explicit example is given in the next section.

\section{$B$ model of the Fermat quintic $M$} \label{s:5}
We now turn to the specific case of the Fermat quintic threefold $M$ in $\PP^4$.
It has been shown that the Hodge diamonds of $M$ and $\cW$ are mirror symmetric
\[
 h^{p,q}(M) = h^{3-p, q}(\cW).
\]
In particular, the deformation family of $\cW$ is one-dimensional
while for $M$ the deformation is $101$ dimensional.

Recall in our study of the $A$ model of $\cW$, we restrict the Dubrovin connection
(i.e.~Frobenius structure) to to the ``small'' parameter $t$ 
corresponding to the hyperplane class $H$.
In the following discussions of the complex moduli of $M$, we will also study
the full period matrix for the Gauss--Manin connection, but
restricted to a particular deformation parameter.

Let 
\begin{equation} \label{e:5.1}
 Q_\psi(x) = x_0^5 + x_1^5 + x_2^5 + x_3^5 + x_4^5 - \psi x_0x_1x_2x_3x_4,
\end{equation}
and define the family $M_\psi = \{Q_\psi(x) = 0\} \subset \PP^4$.  When
writing the Picard-Fuchs equations it will
later become convenient to the coordinate change $t = -5\log(\psi)$.

\subsection{Picard--Fuchs equations for $M_{\psi}$} \label{s:5.1}

In the specific case of the family $M_\psi$, there is a 
``diagrammatic technique'',
pioneered in \cite{CDR} and refined in \cite{DGJ},
which utilizes the symmetry of $Q_\psi$ and $P$ to simplify the bookkeeping.

The starting point is the equation \eqref{e:relate}.
Consider the rational form 
\[ 
 \omega_\psi = \frac{P(x)}{Q_\psi(x)^{k}} \Omega_0, \quad
 P(x) = x_0^{r_0} \cdots x_4^{r_4}, \quad \text{with $\sum_{i=0}^4 r_i = 5(k - 1)$.}
\]
Fix $i$ between $0$ and $4$, and set $B_j = \delta_{ij} x_i P(x)$ for  
$0 \leq j \leq 4$.  
Noting that 
\[ 
 \frac{\partial }{ \partial x_i}Q_\psi(x) = 5 x_i^4 - 
 \psi x_0 \cdots \hat{x_j} \cdots x_4, 
\]
and applying \eqref{e:relate} with these choices of $B_j$
(and $k$ replaced by $k+1$), we arrive at 
\begin{equation} \label{e:5.1.1}
  5\int_{T(\gamma)}\frac{\left(x_i^5\right)P  }{Q_\psi^{k+1}}\Omega_0- 
  \psi\int_{T(\gamma)}\frac{\left( x_0 \ldots x_4\right)P }{Q_\psi^{k+1}}
  \Omega_0 
  = \frac{1 + r_i}{k}\int_{T(\gamma)} \frac{P}{Q_\psi^k} \Omega_0 
\end{equation} 
for any choice of cycle $\gamma \in H_n(X)$.
Note, however, that there is a degenerate case in the above setting:
in the case when $P(x)$ is independent of $x_i$, let $B_j = \delta_{ij} P(x)$.
Then in \eqref{e:relate}
we get
\begin{equation} \label{e:5.1.2}
  5\int_{T(\gamma)}\frac{\left(x_i^4 \right) P}{Q_\psi^{k+1}}\Omega_0- 
  \psi\int_{T(\gamma)} 
 \frac{\left( x_0 \ldots \hat{x_i} \ldots x_4\right)P }{Q_\psi^{k+1}} 
  \Omega_0  = 0.
\end{equation}
\emph{We can interpret this equation as allowing $r_i=-1$ in \eqref{e:5.1.1}}.

%

Furthermore, $\frac{\partial }{ \partial \psi}Q_\psi = -x_0 \cdots x_4$, 
and so we have the relationship 
\begin{equation} \label{e:der} 
 \frac{\partial}{\partial \psi} \int_{T(\gamma)} 
 \frac{P}{Q_\psi^{k}} \Omega_0 = k\int_{T(\gamma)} 
 \frac{\left( x_0 \cdots x_4\right)P }{Q_\psi^{k+1}}\Omega_0. 
\end{equation}

The authors in \cite{CDR, DGJ} apply \eqref{e:5.1.1} \eqref{e:5.1.2} 
and \eqref{e:der} recursively to get relations of the periods, 
hence the Picard--Fuchs equations.
For convenience of bookkeeping, one can keep track of the polynomial $P(x)$ 
by its exponents $(r_0, \ldots, r_4)$.
\eqref{e:5.1.1} can be understood symbolically as a relation between
$(r_0, \ldots, r_4)$, $(r_0,\ldots, r_i+5, \ldots, r_4)$ and 
$(r_0+1, \ldots, r_4 + 1)$.

Consider for example the case $P=1$ corresponding to $(0,\ldots,0)$.
Applying \eqref{e:der} four times, 
one may write the fourth derivative of $(0, \ldots, 0)$ as 
a multiple of $(4,\ldots,4)$. 
This may then be related to $(5, 5, 5, 5, 0)$ by
\eqref{e:5.1.2}.
Applying \eqref{e:5.1.1} to relate $(r_0, \ldots, r_4)$ to
a linear combination of $(r_0,\ldots, r_i-5, \ldots, r_4)$ and 
$(r_0+1, \ldots, r_i-4, \ldots, r_4 + 1)$ repeatedly, one can 
reduce to terms with $r_i \leq 4$ for all $i$.
In fact, eventually all terms will be of the form 
$\{ (r, r,\ldots, r) \}$ for $r =0, \ldots, 4$.
This can be seen by noting that \emph{none of \eqref{e:5.1.1}, \eqref{e:5.1.1} 
or \eqref{e:der} changes $r_i -r_j \, (\on{mod} 5)$.}
Hence, we have found a relation between the fourth derivative of 
$(0,\ldots,0)$ and $\{ (r,\ldots, r) \}$ for $r =0, \ldots, 4$.
By \eqref{e:der}, the various $(r,\ldots, r)$ are $r$-th derivatives of 
$(0,\ldots,0)$, and we obtain a fourth order ODE in $\psi$ 
for the period corresponding to $P=1$.
(See Table~1 below for the equation.)
Other cases can be computed similarly.
These arguments can be illuminated by diagrams in \cite{CDR, DGJ},
 hence the name \emph{diagrammatic technique}.

Now we apply this method to calculate the Picard--Fuchs equations
for the period integrals we are interested in.
For every $g = (r_0, \ldots, r_4) \in G$ (defined in Section~\ref{s:2.1}),
define
\[
 P_g(x) = x_0^{r_0} \cdots x_4^{r_4}
\]
and  
\[
 k = \left( \sum_{i = 0}^4 \frac{r_i}{5} \right) + 1 = \on{age}(g) +1.
\]
We will consider specific families of the form 
\begin{equation} \label{e:5.1.4}
  \omega_g (\psi)= \Res \left( \frac{\psi P_g(x)}{Q_\psi(x)^k} \Omega_0\right)
\end{equation}
For our purposes, \emph{it will be sufficient to 
consider families $\omega_g$ such that $P_g$ satisfies 
$\on{age}(g) \leq 1$ (i.e.\ $\sum_{i = 0}^4 r_i \leq 5$) 
and at least two of the $r_i$'s equal $0$.}
We observe that other $\omega_g$ can be obtained from differentiations
\eqref{e:der} or relations \eqref{e:5.1.1} and \eqref{e:5.1.2}
from the listed $\omega_g$.
For example, $(1,1, 1,1,1)$ is the derivative of $(0,0,0,0,0)$;
$(1,1,1,2,0)$ is related to $(0,0, 0, 1, 4)$ via
\[
  0 \equiv x_3 \partial_{x_4} Q_{\psi} = x_3 x_4^4 - \psi x_0 x_1 x_2 X_3^2 .
\]
We remark that these conditions on $g$ 
match the conditions on $A$ model computation 
in Section~\ref{s:3} perfectly.
In Claim~\ref{claim:6.6} it is shown that the derivatives of these
families generate all of $\sH$.

Table~1 below gives the Picard--Fuchs equation satisfied by each of 
the above-mentioned forms.  We label the forms by the corresponding 
5-tuple $g = (r_0, \ldots, r_4)$.  Note that permuting the $r_i$'s does not 
effect the differential equation, so we do not distinguish between 
permutations.  Here \[t  = -5 \log( \psi).\]
\begin{table}[htdp]
\begin{center}
\[
\begin{array}{|c|c|} \hline
\text{type}&\text{Picard--Fuchs equation}\\ \hline
\, &\, \\
(0,0,0,0,0)& (\frac{d}{dt})^4 - 5^5e^t(\frac{d}{dt} + \frac{1}{5})
(\frac{d}{dt} + \frac{2}{5})(\frac{d}{dt} + \frac{3}{5})(\frac{d}{dt} 
+ \frac{4}{5})\\ 
\, &\, \\
\hline
\, &\, \\
(0,0,0,1,4)& (\frac{d}{dt})^2 - 5^5e^t(\frac{d}{dt} + 2/5)(\frac{d}{dt} + 3/5)\\
\, &\, \\
\hline
\, &\, \\
(0,0,0,2,3)& (\frac{d}{dt})^2 - 5^5e^t(\frac{d}{dt} + 1/5)(\frac{d}{dt} + 4/5)\\
\, &\, \\
 \hline
\, &\, \\
(0,0,1,1,3)& (\frac{d}{dt})(\frac{d}{dt}- 1/5) - 5^5e^t(\frac{d}{dt} + 1/5)(\frac{d}{dt} + 3/5)\\ 
\, &\, \\
\hline
\, &\, \\
(0,0,2,2,1)& (\frac{d}{dt})(\frac{d}{dt}- 2/5) - 5^5e^t(\frac{d}{dt} + 1/5)(\frac{d}{dt} + 2/5)\\ 
\, &\, \\
\hline
\end{array}
\]
\vspace{10pt}
\caption{The Picard--Fuchs equations for forms $\omega_g$.}\label{table}
\end{center}
\end{table}
The same computation was done in \cite{CDR, DGJ}.
We note however that \emph{there are several differences} 
between the period integrals we consider, and those of \cite{DGJ}.  
First, our family $M_\psi$ differs from that in \cite{DGJ} by a factor of 5 in 
the first term.  
Second, the forms we consider \eqref{e:5.1.4} differ slightly from those 
considered in \cite{DGJ} by an extra factor of $\psi$ in the numerator (see 
remark~\ref{r:5.1}).  
Finally, our final equations use different coordinates than in \cite{DGJ}.  
However the same methods used in their paper can easily be modified to 
obtain the formulas we present here.

\begin{remark}\label{r:5.1}
The factor of $\psi$ in the numerator of 
\eqref{e:5.1.4} might appear unnatural at the first 
glance, but it can be considered as a way to change the form of the Picard-Fuchs equation, as
\[
 \frac{d}{d t} e^{-t/5} f(t) = 
 e^{-t/5} \left( - \frac{1}{5} +\frac{d}{d t} \right) f(t). 
\]
In the comparison of $A$ model and $B$ model
this modification will ensure that the $I$ functions from both sides coincide.
It is also used in the Mirror Theorem for the Fermat quintic.
\end{remark}

\subsection{$I^B$-functions} \label{s:5.2}
We can solve the above Picard-Fuchs equations with hypergeometric 
series. As in Section~\ref{s:2}, 
we will organize these solutions in the form of an $I$-function.  
For each of the above forms $\omega_g$, $I^B_g$ will be a function 
taking values in $H^*_{CR}(\cW) \cong H^*(I\cW)$, whose 
components give solutions to the corresponding Picard--Fuchs equation.
\begin{proposition}\label{p:5.1}  For the $g$ listed in table~\ref{table}, 
the components of $I^B_g(t,1)$
give a basis of solutions to the Picard--Fuchs equations 
for $\omega_g$, where $I^B_g(t,z)$ is given below.
\begin{enumerate}
\item[(i)]If $g = e = (0,0,0,0,0)$, 
\begin{equation}\label{e:idim3} 
I^B_e(t,z) = e^{t H/z}\left(1 + \sum_{\langle d \rangle = 0}  e^{dt} 
\frac{\prod \limits_{1 \leq m\leq5d}(5H + mz)}{\prod \limits_{\substack{0 < b \leq d \\ 
\langle b \rangle = 0}} (H + bz)^5}\right) 
\end{equation}  
\item[(ii)] If $g = (0,0,0,r_1, r_2)$, 
\begin{equation}\label{e:idim1}
 \begin{split}
 &I^B_g(t,z) = e^{t H/z}\ii_g \\ 
 &\Bigg(1 +  \sum_{\langle d \rangle = 0}   e^{dt} 
 \frac{\prod \limits_{1 \leq m\leq5d}(5H + mz)}{\prod 
 \limits_{\substack{0 < b \leq d \\ \langle b \rangle = 0}} (H + bz)^3\prod 
 \limits_{\substack{0 < b \leq d \\ \langle b \rangle = \left\langle \frac{r_2}{5}\right\rangle}} 
 (H + bz)\prod 
 \limits_{\substack{0 < b \leq d \\ \langle b \rangle = \left\langle \frac{r_1}{5}\right\rangle}} 
  (H + bz)}\Bigg) 
 \end{split}
\end{equation}
\item[(iii)] If $g = (0,0,r_1, r_1, r_2)$, let 
$g_1 = (-r_1, -r_1, 0,0, r_2 - r_1) (\on{mod} 5)$.  Then
\begin{equation} \label{e:idim0}
 \begin{split} 
  &I^B_g(t,z) = \\
  e^{t H/z}\ii_g &\left(1 + \sum_{\langle d \rangle = 0}  e^{dt} \frac{\prod 
  \limits_{1 \leq m\leq5d}(5H+ mz)}{\prod 
    \limits_{ \substack{0 < b \leq d \\ \langle b \rangle = 0}} (H+bz)^2\prod 
   \limits_{\substack{0 < b \leq d \\ \left\langle b \right\rangle = \left\langle \frac{3r_2}{5} \right\rangle}} 
  (H+bz)^2\prod 
 \limits_{\substack{0 < b \leq d \\ \langle b \rangle = \left\langle \frac{2r_1}{5}\right\rangle}}
 (H+bz)} \right) \\  
 +\:e^{t H/z}\ii_{g_1} &\left(
 \sum_{\langle d \rangle = \left\langle \frac{r_1}{5}\right\rangle}  e^{dt} 
 \frac{\prod \limits_{1 \leq m\leq5d}(5H+ mz)}{\prod 
 \limits_{\substack{0 < b \leq d \\ \langle b \rangle = \left\langle \frac{r_1}{5} \right\rangle}}
 (H+bz)^2\prod 
 \limits_{\substack{0 < b \leq d \\ \langle b \rangle = 0}} (H+bz)^2\prod 
 \limits_{\substack{0 < b \leq d \\ \langle b \rangle = \left\langle \frac{r_2}{5}\right\rangle}}
 (H+bz)}\right) 
 \end{split}
\end{equation}
\end{enumerate}
\end{proposition}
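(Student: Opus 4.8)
The plan is to verify directly that each cohomology-valued hypergeometric series $I^B_g(t,1)$ is annihilated by the Picard--Fuchs operator attached to $g$ in Table~\ref{table}, and then to read off that its scalar components form a full basis of solutions. Throughout I would expand $I^B_g(t,1)$ in the basis $\{\ii_h\,H^k\}$ of $H^*(I\cW)$; since each $\cW_h$ has (complex) dimension at most $3$, the relevant powers of $H$ truncate, $H^{\dim\cW_h+1}=0$ on the sector $\cW_h$, and the prefactor $e^{tH}$ is a polynomial in $t$ of degree $\dim\cW_h$.

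The first step is a \emph{recursion in the degree}. Writing $I^B_g(t,1) = e^{tH}\sum_d c^g_d(H)\,e^{dt}$ (one block in cases (i) and (ii); two blocks, over $\langle d\rangle = 0$ and over $\langle d\rangle = \langle r_1/5\rangle$, in case (iii)), one reads off from the product formulas in the statement that passing from $c^g_{d-1}$ to $c^g_d$ multiplies the numerator by the five factors $(5H+5d-4)\cdots(5H+5d) = 5^5\prod_{k=0}^{4}(H+d-k/5)$ and the denominator by a product of factors $(H+d-a)$ governed by the fractional-part conditions on the product indices. After cancellation this gives
\[
 c^g_d(H) = \frac{5^5\,\prod_j\bigl(H+(d-1)+b_j\bigr)}{\prod_i\bigl(H+d-a_i\bigr)}\;c^g_{d-1}(H),
\]
where $\{a_i\}$ and $\{b_j\}$ are precisely the rational shifts appearing in the factored operator $\prod_i\bigl(\tfrac{d}{dt}-a_i\bigr) - 5^5 e^t\prod_j\bigl(\tfrac{d}{dt}+b_j\bigr)$ recorded in Table~\ref{table} for that $g$. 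This is the bookkeeping-heavy part of the argument, and I expect it to be the only place where errors can creep in: one must keep the fractional-part ranges of the products straight in each of the (two plus two) subcases of (ii) and (iii) as well as in case (i), and in case (iii) check that the $\ii_{g_1}$-block obeys the same recursion, so that it too solves the equation --- now with indicial exponent $r_1/5$ in place of $0$, the boundary term at $d=r_1/5$ vanishing because $H$ restricts to $0$ on the zero-dimensional sector $\cW_{g_1}$.

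With the recursion in hand the annihilation is immediate. Since $H$ is nilpotent on each sector, $e^{tH}e^{dt}=e^{(H+d)t}$ and $\tfrac{d}{dt}$ acts on $e^{(H+d)t}$ by multiplication by $H+d$; hence, for $L = \prod_i\bigl(\tfrac{d}{dt}-a_i\bigr) - 5^5 e^t\prod_j\bigl(\tfrac{d}{dt}+b_j\bigr)$,
\[
 L\bigl(I^B_g\bigr) = \sum_d\Bigl(\,\textstyle\prod_i(H+d-a_i)\,c^g_d(H) \;-\; 5^5\prod_j(H+(d-1)+b_j)\,c^g_{d-1}(H)\Bigr)\,e^{(H+d)t},
\]
and every coefficient vanishes by the recursion (the $d=0$ coefficient vanishing because $\prod_i(H-a_i)=H^{\#\{i\}}=0$ when $\dim\cW_g=0$ or when there are enough zero $a_i$'s). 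Because $L$ has coefficients depending on $t$ only, applying $L$ commutes with extracting coordinates in the $H$-basis, so each scalar component of $I^B_g(t,1)$ solves the Picard--Fuchs equation. Conceptually this is nothing more than the classical fact that a (confluent) hypergeometric series solves its hypergeometric ODE.

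It remains to see the components form a \emph{basis}. Their number equals the order of the operator ($1,H,H^2,H^3$ in case (i); $\ii_g,\ii_gH$ in case (ii); $\ii_g,\ii_{g_1}$ in case (iii)), and they are linearly independent because their $e^t\to 0$ leading behaviors differ: expanding $e^{tH}$ produces leading terms proportional to $\ii_g,\ \ii_g t,\ \dots,\ \ii_g t^{\dim\cW_g}$ for the sector-$\ii_g$ components, while in case (iii) the sector-$\ii_{g_1}$ component starts with a nonzero multiple of $\ii_{g_1}e^{r_1 t/5}$ (the $d=r_1/5$ term being manifestly nonvanishing). These match the exponents of the indicial equation at $t=-\infty$, which is a regular singular point by \cite{pD}, so the solution space --- of dimension equal to the order of the operator --- is spanned by the components of $I^B_g(t,1)$.
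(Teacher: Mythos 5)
Your proposal is correct and is exactly the verification the paper intends but leaves implicit: the paper states Proposition~\ref{p:5.1} without proof, relying on the standard fact that a hypergeometric series whose consecutive-coefficient ratio matches the factored operator is annihilated by it, and on Remark~\ref{r:5.3} for the count of components against the order of the equation. Your ratio computation, the nilpotency of $H$ on each sector (so that $\tfrac{d}{dt}$ acts on $e^{(H+d)t}$ by $H+d$ and the boundary terms at the lowest degree vanish), and the linear-independence argument via the indicial exponents at the regular singular point $e^t=0$ all check out in each of the cases (i)--(iii).
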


\begin{remark}  \label{r:5.3}
Note that the functions $I^B_g(t,z)$ in equations 
\eqref{e:idim3}, \eqref{e:idim1}, and \eqref{e:idim0}, 
are supported on spaces of dimension 3, 1, and 0 respectively.
So for each $g$, the number of components of $I^B_g(t,z)$ equals
the order of the corresponding Picard--Fuchs equation as desired.
\end{remark}

\section{Mirror Theorem for the mirror quintic: $A(\cW) \equiv B(M)$}
In this section, we will show the ``mirror dual'' version of 
(the mathematical version of) the \emph{mirror conjecture} 
by Candelas--de la Ossa--Greene--Parkes \cite{CDGP}. 
More specifically, we will show that the $A$ model of $\cW$ is
equivalent to the $B$ model of $M$, up to a mirror map.  

We start in~\ref{s:6.1} by stating
a ``classical'' mirror theorem relating 
the GWT of $\cW$ with the periods of $M_\psi$ 
on the level of generating functions.
This is exactly analogous to Givental's original formulation in \cite{aG1}.  
In~\ref{s:6.2} we give a brief explanation of how 
Givental's original mirror theorem
implies a full correspondence between the $A$ model of $M$ and 
the $B$ model of $\cW$.  Finally in~\ref{s:6.3}
we use similar methods as in~\ref{s:6.2} to prove a mirror theorem 
equating the $A$ model of $\cW$ to the $B$ model of $M$.

\subsection{A correspondence of generating functions} \label{s:6.1}
We will first show that the $I$-functions $I^A_g$ of the $A$ model of $\cW$
(Definition~\ref{d:Ifunction})
are identical to the $I$-functions $I^B_g$ of the $B$ model of $M_\psi$
defined in Section~\ref{s:5.2}.

\begin{remark}
Note that in the formula 
$I^A_g$, the Novikov
variable $q$ always appears next to $e^t$.  
There is therefore no harm in setting $q = 1$.  
We apply this
specialization in what follows.
\end{remark}

\begin{proposition}\label{p:6.1} 
Let $g = (r_0, \ldots, r_4) \in G$ satisfies the conditions 
$\on{age}(g) \leq 1$ and that at least two of $r_i$'s are equal to zero.
We have an $A$-interpretation of $g$ as parameterizing a component of $\cW_g$
in $I\cW$.
We have also a $B$-interpretation of $g$ in $\omega_g$ \eqref{e:5.1.4}
where $P_g$ denote the polynomial $x_0^{r_0}\cdots x_4^{r_4}$.
Then
\[
  I^A_g(t,z) = I^B_g(t,z).
\]
\end{proposition}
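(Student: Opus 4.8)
The plan is to prove the identity $I^A_g(t,z) = I^B_g(t,z)$ by direct comparison of the two explicit closed-form expressions, since both sides have now been computed. On the $A$-side, $I^A_g(t,z) = i^*(I^E_g(t,z))$ where $I^E_g$ is extracted (as a coefficient in the expansion \eqref{e:3.2.8}) from the twisted $I$-function $I^E(\bt) = \sum_d q^d M^{E/\cW}_d J^{\cY}_d(\bt)$, and the relevant ambient $J$-functions $J^{\cY}_g$ are given explicitly in Corollary~\ref{c:ambientJformula}. On the $B$-side, $I^B_g(t,z)$ is given explicitly in Proposition~\ref{p:5.1}. So the proof reduces to: (a) applying the modification factor $M^{E/\cW}_d = \prod_{m=1}^{5d}(5H + mz)$ to each term of $J^{\cY}_g$; (b) pulling back along $i\colon \cW \hookrightarrow \cY$, which simply means interpreting $H$ as the hyperplane class restricted to $\cW$ and, crucially, \emph{dropping all twisted-sector terms $\ii_{g_j}$ with $\operatorname{age}(g_j) > 1$ or supported on components that pull back to zero}; and (c) matching the surviving expression term-by-term with the formula in Proposition~\ref{p:5.1}.

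I would organize the verification by the three cases of Corollary~\ref{c:ambientJformula}, matched against the three cases of Proposition~\ref{p:5.1}.
\begin{itemize}
\item \emph{Case $g = e$:} Multiply \eqref{e:jdim4} by $M^{E/\cW}_d = \prod_{1 \le m \le 5d}(5H + mz)$. The numerator becomes $\prod_{1\le m\le 5d}(5H+mz)$ and the denominator is $\prod_{0<b\le d,\ \langle b\rangle = 0}(bz - H)^5$; after pulling back to $\cW$ (where $\langle b \rangle = 0$ forces $b$ integral, so $bz - H$ and $H + bz$ differ only by the sign convention already absorbed) this is exactly \eqref{e:idim3}.
\item \emph{Case $g = (0,0,0,r_1,r_2)$:} Apply $M^{E/\cW}_d$ to \eqref{e:jdim1}. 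By Lemma~\ref{l:3.8}, $i^*(\ii_{g_1}) = i^*(\ii_{g_2}) = 0$ for dimensional reasons, so only the $\ii_g$ term survives; its numerator becomes $\prod_{1\le m\le 5d}(5H+mz)$ and its denominator matches that of \eqref{e:idim1} verbatim.
\item \emph{Case $g = (0,0,r_1,r_1,r_2)$:} Apply $M^{E/\cW}_d$ to \eqref{e:jdim0}. Here $i^*(\ii_{g_2}) = 0$, but $i^*(\ii_{g_1}) \neq 0$; however, as noted in the proof of Lemma~\ref{l:3.8}, the $\ii_{g_1}$ contribution, after pulling back, carries a higher power of $z^{-1}$ relative to the modification factor's $z^{5d}$, and so matching against \eqref{e:IA} keeps exactly the $\ii_g$-term plus the $\ii_{g_1}$-term as they appear in \eqref{e:idim0}.
\end{itemize}
In each case one also checks that the summation index sets ($\langle d\rangle = 0$, or $\langle d \rangle = \langle r_i/5\rangle$, etc.) produced by Proposition~\ref{p:mapsallowed} on the $A$-side agree with those appearing in the $B$-side formulas; this is automatic since both are governed by the congruence conditions $\langle b \rangle = r_k/5$.

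The main obstacle is bookkeeping rather than conceptual: one must be careful with the precise combinatorial description of which $(b,k)$ pairs occur in each product in the denominator of $J^{\cY}_g$ (the sets $S(d,h)$), verify that multiplication by $M^{E/\cW}_d$ cancels exactly the right factors, and confirm that the age-and-dimension vanishing in Lemma~\ref{l:3.8} eliminates precisely the extra twisted terms. A secondary subtlety is sign/normalization conventions — the ambient formulas in Corollary~\ref{c:ambientJformula} use both $(bz - H)$ and $(H + bz)$ in different places, and one should confirm these are consistent with the conventions in Proposition~\ref{p:5.1} (this is the point of the factor-of-$\psi$ normalization discussed in Remark~\ref{r:5.1}). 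Once these are pinned down, the identity follows by inspection, case by case.
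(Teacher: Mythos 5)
Your proposal is correct and is essentially the paper's own proof, which simply states that the identity follows from a direct comparison of \eqref{e:jdim4}, \eqref{e:jdim1}, \eqref{e:jdim0} with \eqref{e:idim3}, \eqref{e:idim1}, \eqref{e:idim0}; you have merely spelled out the mechanics (multiplying by $M^{E/\cW}_d$, pulling back along $i$, and discarding the twisted terms killed by Lemma~\ref{l:3.8}). The only quibble is your explanation of the $(bz-H)^5$ versus $(H+bz)^5$ discrepancy in case $g=e$: this is not a ``sign convention already absorbed'' but an apparent typo in \eqref{e:jdim4}, since the general formula \eqref{e:2.2.2} has denominator factors $(bz+H-\lambda_k)$, whose nonequivariant limit is $(bz+H)$.
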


\begin{proof}
This follows from a direct comparison of formulas 
\eqref{e:jdim4}, \eqref{e:jdim1}, and \eqref{e:jdim0} from 
Corollary~\ref{c:ambientJformula} with formulas 
\eqref{e:idim3}, \eqref{e:idim1}, and \eqref{e:idim0} respectively.
\end{proof}

Combining Proposition~\ref{p:6.1} with Theorem~\ref{t:A-model}, we 
conclude that some periods from VHS of $M$ correspond to
the Gromov--Witten invariants of $\cW$.

\begin{corollary} \label{c:6.2}
For $g =(r_0, \ldots, r_4) \in G$ such that $\on{age}(g) \leq 1$ and  
$\cW_g$ is nonempty (i.e. at least two $r_i$'s vanish),
we have
\[
 J_g^{\cW}(\tau(t),z) = 
 \frac{I^B_g(t,z)}{H_g(t)} \qquad \text{where } 
 \tau(t) = \frac{G_0(t)}{F_0(t)}.   
\]
In other words, under the mirror map
\[ 
 t \mapsto \tau = \frac{G_0(t)}{F_0(t)},
\] 
the periods of $\frac{\omega_g}{H_g(t)}$ are equal to the coefficients of 
$J^\cW_g(\tau, 1)$.
\end{corollary}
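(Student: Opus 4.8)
The plan is to assemble the corollary from two results already in hand: Theorem~\ref{t:A-model}, which expresses the orbifold $J$-function of $\cW$ in terms of the $A$-model hypergeometric function $I^A_g$ and a scalar normalization $H_g(t)$ together with the mirror map $\tau(t) = G_0(t)/F_0(t)$; and Proposition~\ref{p:6.1}, which identifies $I^A_g(t,z)$ with the $B$-model hypergeometric function $I^B_g(t,z)$ for exactly the class of $g$ under consideration (i.e.\ $\on{age}(g) \leq 1$ and at least two entries of $g$ equal to zero). First I would invoke Theorem~\ref{t:A-model} to write $J_g^{\cW}(\tau(t),z) = I^A_g(t,z)/H_g(t)$, where $H_g$ is invertible so the division is legitimate, and then substitute $I^A_g = I^B_g$ from Proposition~\ref{p:6.1} to obtain the displayed identity. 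Since $F_0$ and $G_0$ do not depend on $g$ (as recorded in the remark after Theorem~\ref{t:A-model}), the mirror map $\tau(t) = G_0(t)/F_0(t)$ is the same for every admissible $g$, so the statement is uniform across all such $g$, including $g = e$ (with $H_e = F_0$).

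For the ``in other words'' reformulation, I would recall from Proposition~\ref{p:5.1} that the components of $I^B_g(t,1)$ form a basis of solutions to the Picard--Fuchs equation attached to $\omega_g$; by the discussion of Section~\ref{s:4.1}, the period functions $t \mapsto \int_{\gamma(t)} \omega_g$ span the solution space of that ODE, and by Remark~\ref{r:5.3} the number of components of $I^B_g$ matches the order of the equation, so each component of $I^B_g(t,1)$ is a $\CC$-linear combination of periods of $\omega_g$. Dividing through by the scalar $H_g(t)$ shows that the coefficients of $J^{\cW}_g(\tau,1)$ against the basis $\{T_i\}$ of $H^{even}_{CR}(\cW)$ are precisely the periods of the rescaled form $\omega_g/H_g(t)$, read off after the change of variable $t \mapsto \tau(t)$.

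I do not expect a genuine obstacle: the mathematical content is carried entirely by Theorem~\ref{t:A-model} and Proposition~\ref{p:6.1}, and what remains is bookkeeping. The only point requiring a little care is matching conventions --- that the variable $z$ in $J^{\cW}_g$ and in $I^B_g$ plays the same role on both sides (the Dubrovin/Frobenius parameter on the $A$-side, and the formal grading variable for the Picard--Fuchs solutions on the $B$-side, compatible via the common factor $e^{tH/z}$ and the normalization discussed in Remark~\ref{r:5.1}), and that the specialization $q = 1$ loses nothing because $q$ always occurs paired with $e^t$. Once these identifications are made explicit, the corollary follows immediately.
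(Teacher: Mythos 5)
Your proposal is correct and follows exactly the paper's own route: the corollary is obtained by combining Theorem~\ref{t:A-model} (giving $J_g^{\cW}(\tau(t),z) = I^A_g(t,z)/H_g(t)$) with Proposition~\ref{p:6.1} (the identification $I^A_g = I^B_g$), with the period interpretation supplied by Proposition~\ref{p:5.1}. The additional care you take with the uniformity of the mirror map in $g$ and the $q=1$ specialization matches the surrounding remarks in the paper.
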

This theorem should be viewed as an analogue of Givental's original
mirror theorem~\ref{t:msquintic} stated below.

\subsection{Mirror Theorem for the Fermat quintic revisited} 
\label{s:6.2}

To get some insight of the full correspondence, we return to the ``classical'' 
mirror theorem for the Fermat quintic threefold.
While this is not strictly necessary for the logical flow of the proof,
we feel that it illuminates our approach in a simpler setting.
We also strive to clarify certain points which are not entirely clear
in the literature.


Let $J^M(t, z)$ denote the small $J$-function for $M$ 
where $t$ is the coordinate of $H^2(M)$ dual to the hyperplane class $H$.  
Let $\cW_\psi$ denote
the one dimensional deformation family defined 
by the vanishing of $Q_\psi$ (see \eqref{e:5.1}) in $\cY$.  
\begin{equation} \label{e:Qt}
  \cW_\psi := \{ Q_\psi(x) =0 \} 
  \subset \cY.
\end{equation}  Let 
\[
\omega = \Res\left(\frac{\psi \Omega_0}{Q_\psi(x)}\right).
\]
As in section~\ref{s:5} there exists an $H^*(M)$-valued $I$-function, 
$I^B_{\cW_\psi}(t, z)$, such that the components of $I^B_{\cW_\psi}(t,1)$
give a basis of solutions for the Picard--Fuchs equations for 
$\omega_\psi$, where $t= -5 \log \psi$.

\begin{theorem}[Mirror Theorem \cite{aG1}\cite{LLY}] \label{t:msquintic} 
There exist explicitly determined functions $F(t)$ and $G(t)$, 
such that $F$ is invertible, and 
\[
  J^{M}(\tau(t),z) = \frac{I^B_{\cW_\psi}(t,z)}{F(t)} \qquad 
  \text{where } \tau(t) = \frac{G(t)}{F(t)}.
\]
%
%
%
\end{theorem}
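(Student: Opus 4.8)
The plan is to split Theorem~\ref{t:msquintic} into two logically independent halves and then glue them: an \emph{$A$-model computation} that produces $J^M$ as a normalized hypergeometric series, and a \emph{$B$-model computation} that identifies that same series with $I^B_{\cW_\psi}$. The bridge between the two halves is the observation that one and the same hypergeometric function governs both sides.

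For the $A$-model half I would invoke the quantum Lefschetz hyperplane principle for the degree-$5$ hypersurface $M\subset\PP^4$, i.e.\ the specialization of Theorem~\ref{t:QLHT} to the manifold $\PP^4$ with $E=\sO(5)\to\PP^4$ --- equivalently Givental's theorem \cite{aG1} (and \cite{LLY}) --- to the effect that the hypergeometric $I$-function
\[
 I^M(t,z) = e^{tH/z}\left(1 + \sum_{d>0} e^{dt}\,
  \frac{\prod_{m=1}^{5d}(5H+mz)}{\prod_{m=1}^{d}(H+mz)^5}\right)
\]
lies on Givental's Lagrangian cone of $M$. Expanding in powers of $z^{-1}$ gives
\[
 I^M(t,z) = F(t) + \frac{G(t)\,H}{z} + O(z^{-2}),
\]
with $F$ scalar valued and $F(0)=1$, so $F$ is invertible near the origin; because $M$ is Calabi--Yau the $z^0$- and $z^{-1}$-terms are genuinely nonconstant, and this is exactly what produces a nontrivial mirror map. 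Cone membership then yields, just as in \eqref{e:QLHT},
\[
 J^M(\tau(t),z) = \frac{I^M(t,z)}{F(t)}, \qquad \tau(t) = \frac{G(t)}{F(t)}.
\]
To establish the cone membership from scratch I would follow the template of Section~\ref{s:2}: lift to the $S^1$-equivariant graph space $\sMbar_{0,0}(\PP^4\times\PP^1,(d,1))$, apply the Localization Theorem to extract a recursion for the equivariant $J$-function, and verify that $I^M$ satisfies the same recursion together with the same regularity and initial conditions --- the mechanism that proves Theorem~\ref{t:Zformula}.

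For the $B$-model half I would run the Griffiths--Dwork reduction of Section~\ref{s:5} on the period $\int_{\gamma(\psi)}\omega_\psi$ of $\omega_\psi=\Res\!\big(\psi\,\Omega_0/Q_\psi\big)$ on $\cW_\psi$, in the variable $t=-5\log\psi$. This produces the fourth-order hypergeometric Picard--Fuchs operator
\[
 \left(\tfrac{d}{dt}\right)^{\!4} - 5^5 e^t \prod_{j=1}^{4}\Big(\tfrac{d}{dt}+\tfrac{j}{5}\Big),
\]
the type $(0,0,0,0,0)$ row of Table~\ref{table}. A term-by-term check then shows that the four coefficients of $I^M(t,1)$ along $1,H,H^2,H^3$ form a basis of solutions of this equation, the count $4$ matching both the order of the ODE and $\dim i^*H^*(\PP^4)$; the extra factor of $\psi$ in the numerator of $\omega_\psi$ (cf.\ Remark~\ref{r:5.1}) is precisely the normalization that accounts for the $e^{tH/z}$ prefactor, so the two hypergeometric functions agree on the nose. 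Since $I^B_{\cW_\psi}$ is by definition (Section~\ref{s:5.2}) the $H^*_{CR}(\cW)$-valued hypergeometric function whose components are this basis of solutions, we conclude $I^M(t,z)=I^B_{\cW_\psi}(t,z)$ --- equivalently, formula \eqref{e:idim3} matched against the $\sO(5)$-twist of $J^{\PP^4}$, exactly as already recorded for the ambient orbifold in Corollary~\ref{c:ambientJformula}(i), formula \eqref{e:jdim4}.

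Combining the two halves gives $J^M(\tau(t),z)=I^M(t,z)/F(t)=I^B_{\cW_\psi}(t,z)/F(t)$ with $\tau=G/F$, which is the assertion, with $F$ and $G$ the explicit hypergeometric functions of the $A$-model half. The only genuinely hard step is the Lagrangian-cone membership of $I^M$ in the $A$-model half --- this is the mirror theorem proper --- and whether one cites \cite{aG1,LLY} or reproves it by equivariant localization, everything else (the $z^{-1}$-expansion, the Griffiths--Dwork reduction modulo the Jacobian ideal, and the series comparison) is bookkeeping once the normalization of Remark~\ref{r:5.1} is fixed.
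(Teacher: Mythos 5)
Your proposal is essentially correct, but note that the paper offers no proof of Theorem~\ref{t:msquintic} at all: it is imported verbatim from \cite{aG1} and \cite{LLY}, and the surrounding text (Section~\ref{s:6.2}) only explains how to \emph{upgrade} it to the full solution-matrix statement of Theorem~\ref{t:MTfull}. What you have written is a faithful reconstruction of the standard proof, decomposed into the $A$-model half (Lagrangian-cone membership of $I^M$, i.e.\ quantum Lefschetz for $\sO(5)\to\PP^4$, proved by the graph-space localization recursion plus regularity --- exactly the mechanism of Theorem~\ref{t:Zformula}) and the $B$-model half (Griffiths--Dwork reduction producing the order-four operator of Table~\ref{table}, followed by the term-by-term check that the components of $I^M(t,1)$ along $1,H,H^2,H^3$ solve it). This is also precisely the template the paper itself follows for the mirror-dual statement in Section~\ref{s:6.1}, where Theorem~\ref{t:A-model} plays the role of your $A$-model half and Proposition~\ref{p:6.1} the role of your series comparison; so your route is the ``same proof'' transported to the classical direction. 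You correctly isolate the one non-bookkeeping step (cone membership of $I^M$), which is exactly the content of the cited references, so nothing is circular. Two small points: the function $I^B_{\cW_\psi}$ in this direction is $H^*(M)$-valued, not $H^*_{CR}(\cW)$-valued as you wrote (the $B$ model here lives on the mirror family $\cW_\psi$ but its $I$-function is expanded in the cohomology of $M$); and since the paper only pins down $I^B_{\cW_\psi}$ as ``a basis of solutions of the Picard--Fuchs equation,'' your explicit identification of that basis with the hypergeometric components of $I^M$ (forced by the normalization of Remark~\ref{r:5.1}) is a necessary step that you are right to make explicit.
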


We will show how Theorem~\ref{t:msquintic} implies a 
correspondence between the fundamental solution matrix of the Dubrovin 
connection for $M$ and that of the Gauss--Manin connection for $\cW_\psi$.
In order to emphasize the symmetry between the $A$ model and $B$ model,
we will denote the respective pairings as $(-,-)^A$ and $(-,-)^B$.
 
Let 
\[
 s = e^t = \psi^{-5}, 
\]
and consider the flat family $\cW_s$ over $S = \spec(\CC[s])$.  
In the Calabi--Yau case, 
the $H$ expansion of $I^B$ always occurs in the form of a function of $H/z$,
 in particular
$I^B_{\cW_s}$ is homogeneous of degree zero if one sets $\deg(z)=2$.
The same is true of $J^M$.
Thus, one may set $z=1$ without loss of information.
$I^B_{\cW_s}(t,1)$ gives a basis of solutions for the Picard--Fuchs equations 
of $\omega$.
In other words after an appropriate choice of basis
$\{s^B_0(t), \ldots, s^B_3(t)\}$ of solutions of $\nabla^{GM}$, 
\[ 
 ( s^B_i(t), \omega)^B = I^B_i(t,1),
\]
where $I^B_i(t,z)$ is the $H^i$ coefficient of $I^B_{\cW_s}(t,z)$.

By the same argument, if we choose an appropriate basis 
$\{s^A_0(\tau), \ldots, s^A_3(\tau)\}$ of solutions for 
$\nabla^{z}$, Section~\ref{s:1} shows that the coefficients $J^M_i(\tau,1)$ 
of the function $J^M(\tau,1)$ give us the functions 
\[ 
  ( s^A_i(\tau), 1 )^A = J^M_i(\tau,1) .
\]
 
Thus we can interpret Theorem~\ref{t:msquintic} as saying that after 
choosing correct bases of flat sections and applying the mirror map 
\[ 
  t \mapsto \tau = \frac{G(t)}{F(t)},
\] 
we have the equality 
\[ 
  ( s^B_i(t), \omega/F(t) )^B = \frac{I_i(t,1)}{F(t)} 
  = J_i(\tau, 1) = ( s^A_i(\tau), 1 )^A .
\]

To show the full correspondence between the
solution matrix for the Dubrovin connection for $M$ and that of the
Gauss--Manin connection on $S$, 
we must find a basis $\phi_0, \ldots , \phi_3$ of sections of 
$\sH$ and a basis $T_0, \ldots, T_3$ of sections of $H^{even}(M)$ such 
that for all $i$ and $j$,  
\begin{equation}\label{e:mx}
 ( s^B_i, \phi_j )^B = ( s^A_i, T_j )^A
\end{equation} 
As expected, we set $\phi_0 = \omega/F(t)$ and $T_0 = 1$. 

\begin{claim} \label{claim:6.4}
\[
  \phi_j = \left( \nabla^{GM}_{t} \right)^j\phi_0 \:\text{ for } \: 
   0 \leq j \leq 3
\] 
gives a basis of sections for $\sH$.
\end{claim}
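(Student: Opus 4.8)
The plan is to deduce the claim from the explicit Picard--Fuchs computations of Section~\ref{s:5}. First I would record that $\sH\to S$ has rank $4$: its fiber over $t\in S$ is $H^3(\cW_t)$, and $\cW_t$ being a Calabi--Yau threefold with $h^{2,1}(\cW)=1$ forces $\dim H^3(\cW_t)=2+2h^{2,1}=4$. So it is enough to show that the four sections $\phi_0,\nabla^{GM}_t\phi_0,(\nabla^{GM}_t)^2\phi_0,(\nabla^{GM}_t)^3\phi_0$ are linearly independent. Since $F$ is invertible, an induction on $k$ using the Leibniz rule $\nabla^{GM}_t(f\sigma)=f'\sigma+f\nabla^{GM}_t\sigma$ gives
\[
 \on{span}_{\sO_S}\!\bigl(\phi_0,\dots,(\nabla^{GM}_t)^k\phi_0\bigr)=\on{span}_{\sO_S}\!\bigl(\omega,\dots,(\nabla^{GM}_t)^k\omega\bigr),
\]
so I may replace $\phi_0=\omega/F(t)$ by $\omega$; the claim becomes the assertion that $\omega$ is a cyclic vector for the Gauss--Manin connection.

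To prove this I would argue by contradiction. A nontrivial $\sO_S$-linear relation among $\omega,\nabla^{GM}_t\omega,(\nabla^{GM}_t)^2\omega,(\nabla^{GM}_t)^3\omega$, after dividing by its leading coefficient, produces a monic linear ODE of order $k\le 3$ that annihilates every period $\int_{\gamma(t)}\omega$, because $\tfrac{d}{dt}\int_{\gamma(t)}\omega=\int_{\gamma(t)}\nabla^{GM}_t\omega$ for locally constant cycles $\gamma(t)$. But the Griffiths--Dwork reduction of Section~\ref{s:5.1} --- the $(0,0,0,0,0)$ row of Table~\ref{table}, whose four-dimensional solution space is exhibited in Proposition~\ref{p:5.1}(i) as spanned by the components of $I^B_e(t,1)$ --- shows that the Picard--Fuchs operator of $\omega$ has order exactly $4$, and a linear ODE of order $\le 3$ cannot annihilate a four-dimensional space of functions. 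This contradiction forces $\phi_0,\dots,\phi_3$ to be linearly independent, hence a basis of $\sH$ over $S$ (the relevant Wronskian is then nowhere zero on $S$, by Abel's formula, the operator being monic with holomorphic coefficients on $S$). Structurally, this is nothing more than Griffiths transversality together with the non-vanishing of the Kodaira--Spencer maps $\on{gr}_F^p\to\on{gr}_F^{p-1}$, i.e.\ $H^{p,3-p}(\cW_t)\to H^{p-1,4-p}(\cW_t)$, between the one-dimensional Hodge pieces --- and that non-vanishing is precisely what the order-$4$ Picard--Fuchs equation records.

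The only step meriting a word of justification is the passage from ``no linear ODE of order $\le3$ annihilates the periods of $\omega$'' to ``$\omega,\dots,(\nabla^{GM}_t)^3\omega$ are linearly independent sections of $\sH$'': this uses that the period pairing $H^3(\cW_t)\times H_3(\cW_t)\to\CC$ is perfect, so a section of $\sH$ that integrates to zero over every locally constant cycle vanishes. Beyond this the argument is purely formal; the actual content --- that the Picard--Fuchs operator of $\omega$ has order $4$ --- is already in hand from Section~\ref{s:5}, so I do not anticipate a genuine obstacle here.
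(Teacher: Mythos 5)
Your reduction to ``$\omega$ is a cyclic vector for $\nabla^{GM}$'' and your use of the perfect period pairing are fine, but the step that is supposed to carry the whole argument --- that Table~\ref{table} and Proposition~\ref{p:5.1}(i) show the Picard--Fuchs operator of $\omega$ has order \emph{exactly} $4$, so that no ODE of order $\le 3$ can annihilate the four-dimensional space of periods --- is not justified by what you cite, and in fact begs the question. The Griffiths--Dwork reduction of Section~\ref{s:5.1} produces \emph{some} order-$4$ operator annihilating the periods of $\omega$, and Proposition~\ref{p:5.1}(i) says that the solution space of \emph{that operator} is $4$-dimensional, spanned by the components of $I^B_e(t,1)$. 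Neither statement tells you that the periods themselves span a $4$-dimensional space of functions, nor that no operator of order $\le 3$ annihilates them. Indeed, by exactly the duality you invoke at the end (flat cycles pair perfectly with $H^3$, and $\tfrac{d}{dt}\int_{\gamma}\omega=\int_{\gamma}\nabla^{GM}_t\omega$), the three statements ``the minimal annihilating operator has order $4$,'' ``the periods span a $4$-dimensional space,'' and ``$\omega,\dots,(\nabla^{GM}_t)^3\omega$ are linearly independent'' are equivalent; citing the first two as known is therefore circular.

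To close the gap one needs an independent input, and this is precisely what the paper's proof supplies: it computes $\nabla^{GM}_t\phi_0$ as a residue and observes that the new term $\Res\bigl(x_0\cdots x_4\,\Omega_0/Q_\psi^2\bigr)$ has nonzero image in $\sF^2/\sF^3$ by the Jacobian-ring isomorphism \eqref{e:filtration} (because $x_0\cdots x_4\notin J(Q_\psi)$), and similarly each $(\nabla^{GM}_t)^j\phi_0$ has nonzero image in $\sF^{3-j}/\sF^{4-j}$; since these graded pieces are one-dimensional, linear independence follows by triangularity. Your closing remark about the non-vanishing of the Kodaira--Spencer maps identifies the correct mechanism, but as written you derive that non-vanishing \emph{from} the order of the Picard--Fuchs equation rather than the other way around. (Alternatively one could argue that the monodromy at $t=-\infty$ is maximally unipotent, so the periods realize solutions with all powers of $\log$ up to $3$ and hence exhaust the solution space; but that, too, requires an argument not contained in Table~\ref{table}.)
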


\begin{proof}
This follows from standard Hodge theory for Calabi--Yau threefolds, 
but in this case can be explicitly calculated.
\begin{align} 
 \nabla^{GM}_{t} \phi_0 =& \frac{d}{d t}\left(\frac{1}{F(t)}\right)\omega + \frac{1}{F(t)} \nabla^{GM}_{t} \omega \nonumber\\ 
 =& -\frac{F'(t)}{F(t)}\phi_0 + \frac{1}{F(t)}\Res \left(\frac{d}{d t} \frac{\psi \Omega_0}{Q_\psi}\right) \nonumber\\ 
 =& -\frac{F'(t)}{F(t)}\phi_0 +\frac{1}{F(t)}\Res\left( s \frac{d}{d s} \frac{\psi \Omega_0}{Q_\psi}\right)\nonumber \\ 
 = & -\frac{F'(t)}{F(t)}\phi_0 +\frac{1}{F(t)}\Res\left( \frac{-\psi}{5} \frac{d}{d \psi} \frac{\psi\Omega_0}{Q_\psi}\right)\nonumber \\ 
 =& -\frac{F'(t)}{F(t)}\phi_0 + \frac{-\psi}{5 F(t)}\Res \left( \frac{\Omega_0}{Q_\psi} + \frac{x_0 \cdots x_4}{Q_\psi^2}\Omega_0 \right).\label{e:basis}
\end{align}  
Because of the last term in the above sum, the image of 
$\left(\nabla^{GM}_{t}\right) \phi_0$ in $\sF^2 /\sF^3$ is nonzero by 
\eqref{e:filtration}.  
Similarly, the image of 
$\left(\nabla^{GM}_{t}\right)^j \phi_0$ in $\sF^{3-j}/\sF^{3+1-j}$ for
 $1 \leq j \leq 3$ is nonzero, thus the sections $\phi_0, \ldots, \phi_3$ 
must be linearly independent.
\end{proof}

Note that 
\begin{align}\label{e:Abasis}
 &( s^B_i, \phi_1 )^B  = ( s^B_i, \nabla^{GM}_{t} \phi_0 )^B 
=\frac{\partial}{\partial t}  ( s^B_i, \phi_0 )^B =\\
& \frac{\partial}{\partial t} ( s^A_i, T_0 )^A 
  = \left(\frac{\partial \tau}{\partial t}\right) 
   \frac{\partial}{\partial \tau} ( s^A_i, T_0 )^A 
  = \left( s^A_i, \left(\frac{\partial \tau}{\partial t} \right) \nabla^{z}_\tau T_0 \right)^A .
 \nonumber
\end{align}
Therefore, if we set 
\[
 T_1 = \frac{\partial (G/F)}{\partial t} \nabla^{z}_\tau T_0,
\] 
we have the desired relationship 
\[
 ( s^B_i, \phi_1 )^B = ( s^A_i, T_1)^A.
\]  
If we similarly set 
\[
  T_k = \frac{\partial (G/F)}{\partial t} \nabla^{z}_\tau T_{k-1},
\] 
\eqref{e:mx} follows.  

This shows that the mirror map lifts to an isomorphism of vector bundles, and the connection is preserved.  
Indeed, the fundamental solution of the Gauss--Manin connection is a 
$4$ by $4$ matrix, where $4$ is the rank of $H^3(\cW)$.
On the other hand, the fundamental solution of the Dubrovin connection
is also a $4$ by $4$ matrix, where $4$ is the rank of $H^{even}(M)$.
We recall that the $J$-function can be thought of as the first row vectors 
of the fundamental solution matrix, as discussed in Section~\ref{s:1}.
The above discussion shows that we can extend the correspondence between
the first row of the fundamental solution to the full fundamental solution.

We summarize the above in the following theorem.

\begin{theorem} \label{t:MTfull}
The fundamental solutions of the Gauss--Manin connection for $\cW_s$
are equivalent, up to a mirror map, to the fundamental solutions of the 
Dubrovin connection for $M$, when restricted to $H^2(M)$.
\end{theorem}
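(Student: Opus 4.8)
The plan is to upgrade Givental's Mirror Theorem (Theorem~\ref{t:msquintic}), which compares only the single row vectors $J^M$ and $I^B_{\cW_\psi}$, into a statement about the full $4\times 4$ fundamental solution matrices. First I would recall, following Section~\ref{s:1}, that the coefficients $J^M_i(\tau,1)$ are precisely the pairings $( s^A_i(\tau), 1 )^A$ for a suitable basis $\{s^A_i\}$ of $\nabla^z$-flat sections, while the coefficients $I^B_i(t,1)$ of $I^B_{\cW_\psi}$ are the periods $( s^B_i(t), \omega )^B$ of the holomorphic form $\omega = \Res\!\left(\psi\Omega_0/Q_\psi\right)$ against a basis $\{s^B_i\}$ of $\nabla^{GM}$-flat (multivalued) sections; here the Calabi--Yau homogeneity in $z$ (with $\deg z = 2$) lets us set $z=1$ without loss. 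After dividing by $F(t)$ and applying the mirror map $t\mapsto\tau = G(t)/F(t)$, Theorem~\ref{t:msquintic} becomes the base case $( s^B_i, \phi_0 )^B = ( s^A_i, T_0 )^A$ with $\phi_0 := \omega/F(t)$ and $T_0 := 1$.

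Next I would generate the rest of both bases by differentiating: on the $B$ side set $\phi_j := (\nabla^{GM}_t)^j\phi_0$, and on the $A$ side set $T_k := \frac{\partial(G/F)}{\partial t}\,\nabla^z_\tau T_{k-1}$, which by construction stays inside $H^{even}(M)$. The key structural fact is that both connections are flat and compatible with their fiberwise pairings, so that $\frac{\partial}{\partial t}( s^B_i, \phi_j )^B = ( s^B_i, \nabla^{GM}_t\phi_j )^B$ and, by the chain rule $\frac{\partial}{\partial t} = \frac{\partial(G/F)}{\partial t}\frac{\partial}{\partial\tau}$, the analogous identity on the $A$ side. Differentiating the base case repeatedly and feeding in the recursive definitions of $\phi_j$ and $T_k$ then yields $( s^B_i, \phi_j )^B = ( s^A_i, T_j )^A$ for all $0\le i,j\le 3$ by induction on $j$ (this is exactly the computation indicated in \eqref{e:Abasis}). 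Since the left-hand sides are the entries of the Gauss--Manin fundamental solution matrix expressed in the basis $\{\phi_j\}$ and the right-hand sides are the entries of the Dubrovin fundamental solution matrix in the basis $\{T_j\}$, this \emph{is} the asserted equivalence, once the $\phi_j$ and $T_j$ are shown to be honest bases.

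Verifying that $\{\phi_0,\ldots,\phi_3\}$ spans $\sH$ is the step needing the most care, and I would handle it by an explicit Griffiths--Dwork computation as in Section~\ref{s:4.2}: computing $\nabla^{GM}_t\phi_0$ through the residue formula (as in \eqref{e:basis}) shows it has a nonzero component in $\sF^2/\sF^3$, and iterating shows $(\nabla^{GM}_t)^j\phi_0$ has nonzero image in $\sF^{3-j}/\sF^{3+1-j}$, forcing linear independence over the function field; this is Claim~\ref{claim:6.4}. On the $A$ side one argues symmetrically, using that $H^{even}(M)$ is generated by the hyperplane class so that the $T_k$ reproduce (up to an invertible scalar) the $H^k$-directions. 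The genuine obstacle is the bookkeeping: one must check that a \emph{single} choice of $\{s^B_i\}$ (resp.\ $\{s^A_i\}$) works uniformly for every $j$, i.e.\ that we obtain an equality of matrices rather than a row-by-row coincidence with $j$-dependent bases. This is guaranteed precisely because the flat sections $s^B_i$, $s^A_i$ are intrinsic to the respective connections and do not depend on which section $\phi_j$, $T_j$ of the bundle they are paired against, and the mirror map $t\mapsto\tau$ is one fixed change of coordinate. A final rank count, $\operatorname{rk} H^3(\cW_s) = 4 = \operatorname{rk} H^{even}(M)$, then completes the proof.
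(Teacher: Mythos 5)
Your proposal is correct and follows essentially the same route as the paper's own argument in Section~\ref{s:6.2}: the base case from Theorem~\ref{t:msquintic} with $\phi_0=\omega/F(t)$, $T_0=1$, the recursive generation of both bases by $\phi_j=(\nabla^{GM}_t)^j\phi_0$ and $T_k=\frac{\partial(G/F)}{\partial t}\nabla^z_\tau T_{k-1}$, the differentiation-under-the-flat-pairing identity of \eqref{e:Abasis}, and the Griffiths--Dwork linear-independence check of Claim~\ref{claim:6.4}. Your added attention to using one fixed choice of flat bases $\{s^A_i\}$, $\{s^B_i\}$ uniformly in $j$ is a sensible point that the paper leaves implicit.
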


\subsection{Mirror Theorem for the mirror quintic}\label{s:6.3}

In this subsection, we will extend the partial correspondence in 
Section~\ref{s:6.1} between the periods of $M_\psi$ and the $A$ model of $\cW$
to the full correspondence, generalizing the ideas in Section~\ref{s:6.2}.

Similar to the above, 
consider the flat family 
$M_s$ over $S = \spec(\CC[s])$ defined by \eqref{e:5.1}, 
where $s = e^t = \psi^{-5}$.
Corollary~\ref{c:6.2} states that some periods of $M_s$ 
correspond to Gromov--Witten invariants on $\cW$. 
We would like to extend this result to all periods.

First, we must choose a basis of sections of $\sH \to S$.  
Let $\omega_e$ denote the holomorphic family of (3,0)-forms corresponding
to $g= e=(0,\ldots,0)$ in~\eqref{e:5.1.4}. 
It is no longer true that derivatives of $\omega_e/F_0(t)$ with respect to the 
Gauss--Manin connection generate a basis of sections of $\sH$, 
thus it becomes necessary to consider the other forms $\omega_g$
satisfying the conditions formulated in Corollary~\ref{c:6.2}.  Namely, let
$\phi_e = \omega/F_0(t)$ and let $\phi_g = \omega_g/H_g(t)$ 
where $g$ satisfies $\on{age}(g) = 1$.  Consider the set of sections
\[ 
  \{\phi_0, \nabla^{GM}_t \phi_0, (\nabla^{GM}_t)^2 \phi_0,
  (\nabla^{GM}_t)^3 \phi_0\} \cup \{ \phi_g, \nabla^{GM}_t \phi_g\}. 
\] 

\begin{claim} \label{claim:6.6}
These forms comprise a basis of 
the Hodge bundle $\sH$. 
\end{claim}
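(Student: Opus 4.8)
The plan is to count dimensions and identify the Hodge-theoretic position of each candidate section, exactly as in the proof of Claim~\ref{claim:6.4}, but now assembled over the several families $\omega_g$. First I would recall that $\sH \to S$ has rank $\dim H^3(M) = 204$, and that the Hodge decomposition $H^3(M) = H^{3,0}\oplus H^{2,1}\oplus H^{1,2}\oplus H^{0,3}$ has ranks $1, 101, 101, 1$. The $\mathbb{Z}/5$-symmetry group $\bar G$ acts on $H^3(M)$, and the period integrals $\int_{T(\gamma)} \omega_g$ only pair nontrivially with the eigen-subspace of $H^3(M)$ on which $\bar G$ acts through the character determined by $g$. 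So the strategy is to decompose $\sH$ into $\bar G$-isotypic pieces and verify the claim one character at a time; the forms $\omega_g$ appearing in Table~\ref{table} (with $g=e$ and those with $\on{age}(g)=1$, at least two zero entries), together with the forms obtained from them via \eqref{e:der}, \eqref{e:5.1.1}, \eqref{e:5.1.2}, are designed to hit every relevant character.

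Next I would run the argument character-by-character. For the invariant piece (the $g=e$ character), the Picard--Fuchs equation in Table~\ref{table} has order $4$, the corresponding eigen-subspace of $H^3(M)$ has dimension $4$, and exactly as in the proof of Claim~\ref{claim:6.4} the successive Gauss--Manin derivatives $\phi_0,\nabla^{GM}_t\phi_0,(\nabla^{GM}_t)^2\phi_0,(\nabla^{GM}_t)^3\phi_0$ occupy successively deeper steps $\sF^3\supset\sF^2\supset\sF^1\supset\sF^0$ of the Hodge filtration restricted to that character, using \eqref{e:filtration} and the explicit residue computation \eqref{e:basis} to see each image in $\sF^{3-j}/\sF^{4-j}$ is nonzero; hence they are linearly independent and span. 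For each character with $\on{age}(g)=1$ that contributes a form $\omega_g$ from Table~\ref{table}, the Picard--Fuchs equation has order $2$ and the eigen-subspace is $2$-dimensional, so I would argue that $\phi_g$ and $\nabla^{GM}_t\phi_g$ are linearly independent: $\phi_g$ lies in the appropriate $\sF^p$ and $\nabla^{GM}_t\phi_g$ has nonzero image in $\sF^{p-1}/\sF^p$ by Griffiths transversality together with \eqref{e:relate}/\eqref{e:filtration} (the pole-order of the residue representative strictly increases under $\nabla^{GM}_t$ because the relevant polynomial is not in the Jacobian ideal). One also needs that the forms $\omega_g$ listed, plus those deduced from them by \eqref{e:der}, \eqref{e:5.1.1}, \eqref{e:5.1.2}, exhaust all $\bar G$-characters appearing in $H^3(M)$; this is the content of the parenthetical remarks following \eqref{e:5.1.4} and can be verified by a direct enumeration of $5$-tuples $(r_0,\dots,r_4)$ with $\sum r_i\equiv 0\ (\on{mod}\ 5)$, matching against the age-$\le 1$ representatives.

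The main obstacle I anticipate is bookkeeping rather than conceptual: one must (a) be careful that the twisted sectors $\cW_g$ on the $A$-side correspond correctly to the $\bar G$-eigenspaces of $H^3(M)$ on the $B$-side — the age of $g$ matches the Hodge type, which is why only $\on{age}(g)\le 1$ forms are needed — and (b) confirm that the $204$ sections assembled (namely $4$ from the invariant character plus $2$ from each of the $100$ nontrivial relevant characters, counting $g$ and $g^{-1}$ separately) really do add up to $204 = 4 + 2\cdot 100$ and are mutually independent across characters, which is automatic since they live in distinct isotypic summands. The only genuinely substantive point is the non-vanishing of the image of $\nabla^{GM}_t\phi_g$ in the graded piece $\sF^{p-1}/\sF^p$ for the age-one forms; this follows from the Griffiths--Dwork analysis of Section~\ref{s:4.2}, since lowering the pole order would require the numerator polynomial to lie in $J(Q_\psi)$, which it does not for generic $\psi$. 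Once these are in place, linear independence plus the dimension count $\dim_{\CC(S)}\sH = 204$ forces these sections to be a basis.
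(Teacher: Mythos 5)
Your proposal is correct and follows essentially the same route as the paper: count the candidate sections ($4 + 2\cdot 100 = 204 = \on{rank}\,\sH$, the $100$ coming from the multiplicities $20,20,30,30$ of the four age-one types in Table~\ref{table}) and establish linear independence by locating each section in the Hodge filtration via \eqref{e:filtration} and a pole-order/Jacobian-ideal computation as in \eqref{e:basis}. Your $\bar{G}$-isotypic decomposition is a clean way of making explicit the cross-independence between distinct $g$'s, which the paper leaves as ``one can check''; the only point to verify there is that no two of the listed tuples differ by a multiple of $(1,1,1,1,1)$ (and hence define the same character of $\bar{G}$), which indeed holds for the age-one representatives with at least two zero entries.
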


\begin{proof}
The proof is similar to Claim~\ref{claim:6.4}.
We note that in the last four rows in Table~1, corresponding to age one type,
the dimensions are $20, 20, 30,$ and $30$.  Thus $|\{ \phi_g\}|=100$, and
there are exactly 204 forms in the above set.
One can check via \eqref{e:filtration} and another argument like in 
\eqref{e:basis} that these sections are in fact linearly independent.
\end{proof}

Then, as in~\eqref{e:Abasis} the periods of $(\nabla^{GM}_t)^k \phi_0$ 
correspond to the derivatives $\left(\frac{d}{d t}\right)^k J^{\cW}_e(\tau, 1)$,
and the periods of $\nabla^{GM}_t \phi_g$ correspond to 
$\left(\frac{d}{d t}\right) J^{\cW}_g(\tau, 1)$. 

Let $T_0 = 1$, and $T_k = \frac{\partial (G_0/F_0)}{\partial t} \nabla^{z}_\tau T_{k-1}$
for $0 \leq k \leq 3$.  Let $T_g = \ii_g$ and $T_g ' = \frac{\partial (G_0/F_0)}{\partial t} \nabla^{z}_\tau \ii_g$.  Then if we choose the correct basis of flat sections $\{s^B_i\}$
and $\{s^A_i\}$, we have that 
\[\begin{split}
 ( s^B_i,  (\nabla^{GM}_t)^k \phi_0 )^B &= ( s^A_i, T_k )^A,\\
 ( s^B_i,   \phi_g )^B &= ( s^A_i, T_g )^A\: \on{and}\\
 ( s^B_i,  \nabla^{GM}_t \phi_g )^B &= ( s^A_i, T_g ' )^A.
 \end{split}
\]
This implies that the set
\[ 
  \{T_0, T_1, T_2, T_3\} \cup \{ T_g, T_g '\}, 
\] 
is a basis of $TH^{even}_{CR}(\cW)$, and that with these  choices
of bases the solution matrices for the two respective connections are identical
after the mirror transformation.  Thus we obtain the full correspondence.

In terms of the language of Theorem~\ref{t:MTfull}, we can formulate
our final result in the following form.
On the side of the $A$ model of $\cW$, let $t$ be the dual coordinate of $H$;
on the side of $B$ model of $M_s$, let $t = \log(s)$. 
Then we have

\begin{theorem} \label{t:MMTfull}
The fundamental solutions of the Gauss--Manin 
connection $\nabla^{GM}_t$ for $M_s$ is
equivalent, up to a mirror map, to the fundamental solutions of
the Dubrovin connection $\nabla^z_t$ for $\cW$ restricted to $tH \in H^2(\cW)$.
\end{theorem}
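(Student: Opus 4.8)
The plan is to promote the partial correspondence of Corollary~\ref{c:6.2} --- which matches only the \emph{first row} of the two solution matrices, i.e.\ the $J$-function $J^{\cW}_g(\tau,1)$ against the period vector of $\omega_g/H_g(t)$ --- to an equality of the \emph{full} $204 \times 204$ fundamental solution matrices, repeating for $\cW$ the argument carried out for the Fermat quintic in Section~\ref{s:6.2}. Concretely, I would work over the one-dimensional base $S = \spec(\CC[s])$ with $s = e^t = \psi^{-5}$, carrying the Gauss--Manin connection $\nabla^{GM}_t$ on $\sH = R^3\pi_*\CC\otimes\sO_S$ on the $B$-side and the Dubrovin connection $\nabla^z_t$ on $TH^{even}_{CR}(\cW)$ restricted to $tH \in H^2(\cW)$ on the $A$-side. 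The goal is to produce a basis $\{\phi_\bullet\}$ of sections of $\sH$, a basis $\{T_\bullet\}$ of sections of $TH^{even}_{CR}(\cW)$, and a matching $\{s^B_i\} \leftrightarrow \{s^A_i\}$ of flat sections, so that after the mirror change of variables $\tau(t) = G_0(t)/F_0(t)$ one has $(s^B_i, \phi_j)^B = (s^A_i, T_j)^A$ for all $i,j$.

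For the $B$-side basis I would take, besides the holomorphic family $\phi_0 = \omega_e/F_0(t)$, the $100$ forms $\phi_g = \omega_g/H_g(t)$ indexed by the $g$ with $\on{age}(g)=1$ and at least two zero entries (the four ``age one'' types of Table~\ref{table}, with $20,20,30,30$ permutations each), and then close up under $\nabla^{GM}_t$:
\[
 \{\phi_0,\ \nabla^{GM}_t\phi_0,\ (\nabla^{GM}_t)^2\phi_0,\ (\nabla^{GM}_t)^3\phi_0\}\ \cup\ \{\phi_g,\ \nabla^{GM}_t\phi_g\},
\]
a set of $4 + 2\cdot 100 = 204$ forms, which is Claim~\ref{claim:6.6}. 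I would prove linear independence exactly as in~\eqref{e:basis}: by the Griffiths--Dwork identifications~\eqref{e:filtration} each successive Gauss--Manin derivative of $\phi_0$ acquires a strictly higher pole order whose leading polynomial is not in the Jacobian ideal of $Q_\psi$, so $(\nabla^{GM}_t)^j\phi_0$ has nonzero image in $\sF^{3-j}/\sF^{3+1-j}$; likewise $\nabla^{GM}_t\phi_g$ escapes the Hodge graded piece of $\phi_g$; and the $\phi_g$ for different $g$ are independent because they are residues of forms with different monomials $P_g$, landing in distinct components $\cW_g$ of $I\cW$ with the prescribed Chen--Ruan degrees. It is essential here that the age-one forms $\omega_g$ be included at all: unlike the one-modulus Fermat situation, the $\nabla^{GM}_t$-derivatives of $\omega_e$ alone span only a rank-$4$ sub-bundle of the rank-$204$ bundle $\sH$, and the remaining $200$ directions are supplied precisely by the age-one families --- the same families that feed the $A$-model computation of Section~\ref{s:3}.

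It then remains to match the two sides. Declaring $T_0 = \ii_e = 1$, $T_k = \tfrac{\partial(G_0/F_0)}{\partial t}\,\nabla^z_\tau T_{k-1}$ for $k=1,2,3$, $T_g = \ii_g$, and $T_g' = \tfrac{\partial(G_0/F_0)}{\partial t}\,\nabla^z_\tau \ii_g$, the key identity is the one already exploited in~\eqref{e:Abasis}: on the $B$-side $\nabla^{GM}_t$ commutes with integration over locally constant cycles, so $(s^B_i,\nabla^{GM}_t\phi_\bullet)^B = \tfrac{\partial}{\partial t}(s^B_i,\phi_\bullet)^B$; applying Corollary~\ref{c:6.2} together with the chain rule $\tfrac{\partial}{\partial t} = \tfrac{\partial\tau}{\partial t}\,\tfrac{\partial}{\partial\tau}$ and the $\nabla^z$-flatness of the $s^A_i$ converts this into $(s^A_i,\tfrac{\partial\tau}{\partial t}\nabla^z_\tau T_\bullet)^A$, so the column indexed by $\phi_\bullet$ of the $B$-matrix equals the column indexed by $T_\bullet$ of the $A$-matrix. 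Since these matrices are fundamental solutions of flat connections they are invertible, which retroactively forces $\{T_0,\dots,T_3\}\cup\{T_g,T_g'\}$ to be a genuine basis of $TH^{even}_{CR}(\cW)$, and the resulting equality of solution matrices after the mirror map is exactly the assertion of the theorem. The \textbf{main obstacle} is Claim~\ref{claim:6.6} --- establishing that these particular $204$ repeated Gauss--Manin derivatives really are linearly independent --- which requires the careful Griffiths--Dwork bookkeeping (no accidental relations among the polynomials $(\partial/\partial\psi)^k(\psi P_g)$ modulo $J(Q_\psi)$) sketched above; once that is in hand, the rest of the argument is formal and parallels Theorem~\ref{t:MTfull}.
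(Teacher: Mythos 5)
Your proposal follows essentially the same route as the paper: the same choice of $B$-side basis $\{\phi_0,\dots,(\nabla^{GM}_t)^3\phi_0\}\cup\{\phi_g,\nabla^{GM}_t\phi_g\}$ (Claim~\ref{claim:6.6}, proved via \eqref{e:filtration} and the argument of \eqref{e:basis}), the same $A$-side sections $T_k$, $T_g$, $T_g'$ built from $\nabla^z_\tau$ and the mirror map, and the same column-by-column matching via the differentiation identity \eqref{e:Abasis} applied to Corollary~\ref{c:6.2}. Your identification of Claim~\ref{claim:6.6} as the essential point, and the observation that invertibility of the solution matrices forces the $T_\bullet$ to form a basis, both coincide with the paper's argument.
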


\begin{remark}
Even though the base direction is constrained to one dimension instead
of the full $101$-dimension deformation space, our fundamental solutions
are full $204$ by $204$ matrices, as both ranks of $H^3(M)$ and
$H^{even}(\cW)$ are $204$.
\end{remark}

\end{document}